\theoremstyle{plain}
\newtheorem{thm}{Theorem}[section]
\newtheorem{prop}[thm]{Proposition}
\newtheorem{cor}[thm]{Corollary}
\newtheorem{lem}[thm]{Lemma}
\theoremstyle{definition}
\theoremstyle{remark}
\newtheorem{example}[thm]{Example}
\newcommand{\QQ}{{\bar{\mathbb{Q}}}}
\newcommand{\Q}{\mathbb{Q}}
\newcommand{\Z}{\mathbb{Z}}
\author{Manisha Kulkarni, Dipramit Majumdar \& Balasubramanian Sury}
\title{$l$-Class groups of cyclic extensions of prime degree $l$}
\date{}
\begin{document}
\maketitle

\begin{abstract}
Let $K/F$ be a cyclic extension of prime degree $l$ over a number
field $F$. If $F$ has class number coprime to $l$, we study the
structure of the $l$-Sylow subgroup of the class group of $K$. In
particular, when $F$ contains the $l$-th roots of unity, we obtain
bounds for the $\mathbb{F}_l$-{\rm rank} of the $l$-Sylow subgroup
of $K$ using genus theory. We obtain some results valid for general
$l$. Following that, we obtain more complete results for $l=5$ and
$F = \mathbb{Q}(\zeta_5)$. The {\rm rank} of the $5$-class group of
$K$ is expressed in terms of power residue symbols. We compare our
results with tables obtained using SAGE (the latter is under GRH).
We obtain explicit results in several cases. Using these results,
and duality theory, we deduce results on the $5$-class numbers of
fields of the form $\mathbb{Q}(n^{1/5})$. \footnote{Mathematics
Subject Classification 11 R 29, 13 C 20}
\end{abstract}

\tableofcontents

\section{Introduction}
\noindent We study the $l$-class group of $K$, where $K$ is a cyclic
extension of degree $l$ over a number field $F$ which contains the
$l$-th roots of unity and has trivial $l$-class group, where $l$ is
an odd prime. Denote by $\tau$ a generator of Gal $(K/F)$. The
$l$-class group $S_K$ is a $\mathbb{Z}_l[\zeta_l]$-module since
$$\mathbb{Z}_l[\zeta_l] \cong \mathbb{Z}_l[Gal(K/F)]/(1+ \tau +
\cdots + \tau^{l-1})$$ where $\zeta$ corresponds to $\tau$. As a
module over the discrete valuation ring $\mathbb{Z}_l[\zeta]$ whose
maximal ideal is generated by $\lambda = 1- \zeta$, the $l$-class
group $S_K$ of $K$ decomposes as
$$S_K \cong \mathbb{Z}_l[\zeta_l]/(\lambda^{e_1}) \oplus
\mathbb{Z}_l[\zeta_l]/(\lambda^{e_2}) \oplus \cdots \oplus
\mathbb{Z}_l[\zeta_l]/(\lambda^{e_t})$$ for some $1 \leq e_1 \leq
e_2 \leq \cdots \leq e_t$. Our goal is to compute the rank of $S_K$
which is the dimension of the $\mathbb{F}_l$-vector space $S_K
\otimes_{\mathbb{Z}_l} \mathbb{F}_l$. To find the $e_i$'s, one looks
at
$$s_i = |\{e_j : e_j = i \}|.$$ Then, the rank of
the $\mathbb{Z}_l[\zeta_l]$-module $\lambda^{i-1}S_K/\lambda^i S_K$
is $t-s_1- \cdots-s_{i-1}$ - this is also called the
$\lambda^i$-rank of $S_K$. To compute these numbers, we consider the
decreasing filtration
$$S_K \supset \lambda S_K \supset \lambda^2 S_K \supset \cdots$$
and construct ideal classes generating the pieces
$\lambda^{i-1}S_K/\lambda^i S_K$ and construct genus fields
corresponding to them. This is difficult to carry out explicitly in
general. However, the general analysis does lead to expressions and
bounds for the rank of $S_K$ such as:\\
{\bf Proposition.} $S_{K}$ is isomorphic to the direct product of an
elementary abelian $\ell$-group of {\rm rank} $s_1$ and an abelian
$\ell$ group of {\rm rank} $$(\ell -1)(t-s_{1}) - (\ell -3)s_{2}-
(\ell -4)s_{3}- \cdots -s_{\ell -2}.$$ In particular, $${\rm rank}
S_K = (\ell -1)t - (\ell -2)s_{1}- (\ell -3)s_{2}- \cdots -s_{\ell
-2}$$ satisfies the bounds
$$2t-s_1 \leq rank~S_K \leq (l-1)t-(l-2)s_1$$
both of which are achievable.\\
This is proved in section 3; the expression for the rank is almost
immediate but the direct sum decomposition proved in the proposition
is used while constructing the genus fields explicitly later. \vskip
3mm

\noindent In section 4, we assume that $F$ contains the $l$-th roots
of unity and construct genus fields corresponding to the pieces of
the class group as above. These fields are of the form $K(x_1^{1/l},
x_2^{1/l}, \cdots, x_t^{1/l})$. For a basis $\{P_j \}$ of ideal
classes for a piece, using Kummer theory to map the Galois group of
the corresponding genus field to $\mathbb{F}_l^t$, one writes down a
matrix with entries in $\mathbb{F}_l$ from that part of the class
group. This allows us to express the rank of that piece of the class
group in terms of the rank of a matrix of Artin symbols of the form
$\bigg( \frac{K(x_i^{1/l})/K}{P_j} \bigg)$ (see theorems 4.1, 4.2).
\vskip 3mm

\noindent In section 5, we specialize to $l=5$ and $F =
\mathbb{Q}(\zeta)$ which allows us to precisely work out the
previous results. The major part of the paper is contained in
sections 5 and 6. In section 5, we use ideles to rewrite the earlier
computations of the $s_i$'s in terms of Artin symbols in a more
explicit form in terms of local Hilbert symbols.
One of the results in section 5 is:\\
{\bf Theorem.} Let $K=F(x^{\frac{1}{5}})$, $x= u
\lambda^{e_{\lambda}} \pi_{1}^{e_{1}} \cdots \pi_{g}^{e_{g}}$ and $F
= \mathbb{Q}(\zeta)$ where each $\pi_i$ is a prime element congruent
to a rational integer modulo $5 \mathbb{Z}[\zeta]$ and $u$ is a unit
in $F$. Let
$M_{1}=K(x_{1}^{\frac{1}{5}},\cdots,x_{t}^{\frac{1}{5}})$ denote the
genus field of $K/F$, where $[M_{1}:K]=5^{t}$, $x_{i} \in F$ for
$1\leq i \leq t$, and $x_{i} \equiv \pm1,\pm7 \pmod{ \lambda^{5}}$.
For $1\leq i \leq t, 1\leq j \leq g,$ let $\nu_{ij}$ denote the
degree $5$ Hilbert symbol $\Big(\frac{x_{i},x}{(\pi_{j})}\Big)$ in
the local field $K_{\pi_j}$. Further, suppose
$$\nu_{i,g+1} = \Big(\frac{x_{i},\lambda}{(\lambda)}\Big)
\text {   for } 1\leq i \leq t, \text{ if the ideal} (\lambda)
\text{of $F$ ramifies in }K.$$ If $\gamma_{ij}\in \mathbb{F}_{\ell}$
are defined by the power symbol $\zeta^{\gamma_{ij}} =
(x_{i}^{\frac{1}{\ell}})^{\nu_{ij}-1}$, and $C_{1}$ is the matrix
$(\gamma_{ij}), 1\leq i \leq t, 1\leq j \leq u=g$ or $g+1$, we have
$$s_{1} = {\rm rank} C_{1}.$$

\noindent The above result is under the assumption that ambiguous
ideals are strongly ambiguous; in the contrary case, we have a very
similar statement with a slightly bigger matrix (see theorem 5.9). \\
A similar result is proved for computing $s_i$'s for $i>1$ (see
theorem 5.10). Thus, we have some results on the $\lambda^i$-rank of
the $5$-class group for general $i$ and the results on
$\lambda^2$-rank are easily computable in many situations. \vskip
5mm

\noindent We give tables of class groups obtained by using the SAGE
program and compare our results in its light. Interestingly, after a
close inspection of the tables, we were able to guess the following
general results which we prove (theorems 5.12,5.13,5.14,5,16,5.17):
\vskip 3mm

\noindent {\bf Theorem.} Let $p$ be a prime number congruent to $-1
\pmod{5}$. Let $F=\mathbb{Q}(\zeta_{5})$ and $K=F(p^{\frac{1}{5}})$.
Assuming that each ambiguous ideal class is strongly ambiguous, we
have that $25$ divides the class number of $K$. More precisely, the
$\lambda^2$-rank (to be defined below) of the $5$-class group
$S_{K}$ is $1$ and, $ 2 \leq \mathrm{rank } S_{K} \leq 4$. \vskip
3mm

\noindent {\bf Theorem.} Let $p$ be a prime number congruent to $\pm
7 \pmod{25}$ and $q$ be a prime number congruent to $-1 \pmod{5}$.
Let $F=\mathbb{Q}(\zeta_{5})$ and $K=F((pq)^{\frac{1}{5}})$.
[Assuming that each ambiguous ideal class is strongly ambiguous,] we
have that $125$ divides the class number of $K$. More precisely,
$\lambda^2$-rank of $S_{K}$ is $1$ and we have, $3 \leq \mathrm{rank
} S_{K} \leq 5$. \vskip 3mm

\noindent {\bf Theorem.} Let $p_{i} \equiv \pm 7 \pmod{25}$ for $1
\leq i \leq r$ be primes and $r \geq 2$. Let $n=p_{1}^{a_{1}} \cdots
p_{r}^{a_{r}}$, where $1 \leq a_{i} \leq 4$ for $1 \leq i \leq r$.
Let $F=\mathbb{Q}(\zeta_{5})$ and $K=F(n^{\frac{1}{5}})$. \\
(i) If all ambiguous ideal classes of $K/F$ are strongly ambiguous,
then the $\lambda^2$-rank
of $S_{K}$ is $r-1$ and $2r-2 \leq \mathrm{rank } S_{K} \leq 4r-4$. \\
(ii) If there are ambiguous ideal classes which are not strongly
ambiguous, then $s_{1} \leq 2$, $\lambda^{2}$-rank of $S_{K}$ is
greater than or equal to $r-3$ and $\max(2r-4,r-1) \leq \mathrm{rank
}~ S_{K} \leq 4r-4$. \vskip 3mm

\noindent {\bf Theorem.}  Let $p_{i} \equiv \pm 7 \pmod{25}$ for $1
\leq i \leq r$ be primes and let $q_{j}$ be primes such that $q_{j}
\equiv \pm 2 \pmod{5}$ but $q_{j} \not\equiv \pm7 \pmod{25}$ for $1
\leq j \leq s$. Let $n=p_{1}^{a_{1}} \cdots
p_{r}^{a_{r}}q_{1}^{b_{1}}\cdots q_{s}^{b_{s}}$, where $1 \leq
a_{i},b_{j} \leq 4$ for $1 \leq i \leq r$ and $1 \leq j \leq s$. Let
$n \not\equiv \pm 1, \pm 7 \pmod{25}$. Let $F=\mathbb{Q}(\zeta_{5})$
and
$K=F(n^{\frac{1}{5}})$. \\
(i) If all ambiguous ideal classes of $K/F$ are strongly ambiguous,
then the $\lambda^2$-rank of $S_{K}$ is $r+s-1$ and $2r+2s-2 \leq
\mathrm{rank } S_{K} \leq 4r+4s-4$.\\
(ii) If there are ambiguous ideal classes which are not strongly
ambiguous, then $s_{1} \leq 1$, $\lambda^{2}$-rank of $S_{K}$ is
greater than or equal to $r+s-2$ and $\max(2r+2s-3,r+s-1) \leq
\mathrm{rank } S_{K} \leq 4r+4s-4$. \vskip 3mm

\noindent {\bf Theorem.} Let $p_{i} \equiv \pm 7 \pmod{25}$ for $1
\leq i \leq r$ be primes
 and let $q_{j}$ be primes such that $q_{j} \equiv \pm 2 \pmod{5}$
  but $q_{j} \not\equiv \pm7 \pmod{25}$ for $1 \leq j \leq s$ with $s \geq 2$.
   Let $n=p_{1}^{a_{1}} \cdots p_{r}^{a_{r}}q_{1}^{b_{1}}\cdots q_{s}^{b_{s}}$,
    where $1 \leq a_{i},b_{j} \leq 4$ for $1 \leq i \leq r$ and $1 \leq j \leq s$.
    Let $n \equiv \pm 1$ or $\equiv \pm 7 \pmod{25}$. Let $F=\mathbb{Q}(\zeta_{5})$ and
    $K=F(n^{\frac{1}{5}})$.\\
(i) If all ambiguous ideal classes of $K/F$ are strongly ambiguous,
then the
$\lambda^2$-rank of $S_{K}$ is $r+s-2$ and $2r+2s-4 \leq \mathrm{rank } S_{K} \leq 4r+4s-8$.\\
(ii) If there are ambiguous ideal classes which are not strongly
ambiguous, then $s_{1} \leq 1$, $\lambda^{2}$-rank of $S_{K}$ is
greater than or equal to $r+s-3$ and $\max(2r+2s-5,r+s-2) \leq \mathrm{rank }
S_{K} \leq 4r+4s-8$. \vskip 3mm

\noindent From the last three theorems, one may deduce information
about certain $5$-class groups of purely quintic extensions of
$\mathbb{Q}$ such as corollary 6.7: \vskip 3mm

\noindent  {\bf Corollary:}\\
Let $N$ be a positive integer of one of the following forms. Then,
the $5$-class group of $L=\mathbb{Q}(N^{\frac{1}{5}})$ is either
trivial or cyclic:

\begin{itemize}
\item Let $N=p^{a}$, where $p \equiv \pm2 \pmod{5}$ is a prime, $1 \leq a \leq 4$.
\item Let $N=q_{1}^{a_{1}}q_{2}^{a_{2}}$ where  $q_{i} \equiv \pm 2 \pmod{5}$ but $q_{i} \not\equiv \pm 7 \pmod{25}$, $1 \leq a_{i} \leq 4$ for $i=1,2$ such that $N \equiv \pm 1, \pm 7 \pmod{25}$.
\item Let $N=p^{a}$, where $p \equiv -1 \pmod{5}$ is a prime, $1 \leq a \leq 4$.
\item Let $N=p_{1}^{a_{1}}p_{2}^{a_{2}}$ where $p_{i} \equiv \pm 7 \pmod{25}$, $1 \leq a_{i} \leq 4$ for $i=1,2$ such that $N \equiv \pm 1, \pm 7 \pmod{25}$.
\item $N=p^{a}q^{b}$ where $p \equiv \pm 7 \pmod{25}$ , $q \equiv \pm 2 \pmod{5}$ but $q \not\equiv \pm 7 \pmod{25}$ and $1 \leq a,b \leq 4$such that $N \not\equiv \pm 1, \pm 7 \pmod{25}$.
\item $N=q_{1}^{a_{1}}q_{2}^{a_{2}}$ where  $q_{i} \equiv \pm 2 \pmod{5}$ but $q_{i} \not\equiv \pm 7 \pmod{25}$, $1 \leq a_{i} \leq 4$ for $i=1,2$ such that $N \not\equiv \pm 1, \pm 7 \pmod{25}$.
\item  $N=p_{1}^{a_{1}}p_{2}^{a_{2}}q^{b}$ where $p_{i} \equiv \pm 7 \pmod{25}$, $q \equiv \pm 2 \pmod{5}$ but $q \not\equiv \pm 7 \pmod{25}$  $1 \leq a_{i},b \leq 4$ for $i=1,2$ such that $N \equiv \pm 1,\pm 7 \pmod{25}$.
\item $N=p^{a}q_{1}^{a_{1}}q_{2}^{a_{2}}$ where $p \equiv \pm 7 \pmod{25}$, $q_{i} \equiv \pm 2 \pmod{5}$ but $q_{i} \not\equiv \pm 7 \pmod{25}$, $1 \leq a,a_{i} \leq 4$ for $i=1,2$ such that $N \equiv \pm 1,\pm 7 \pmod{25}$.
\item  $N=q_{1}^{a_{1}}q_{2}^{a_{2}}q_{3}^{a_{3}}$ where  $q_{i} \equiv \pm 2 \pmod{5}$ but $q_{i} \not\equiv \pm 7 \pmod{25}$, $1 \leq a_{i} \leq 4$ for $i=1,2,3$, such that $N \equiv \pm 1, \pm 7 \pmod{25}$.
\item Let $N=p^{a}q^{b}$, where $p \equiv -1 \pmod{5}$ and $q \equiv \pm 7 \pmod{25}$ are primes, $1 \leq a,b \leq 4$.
\end{itemize}
\vskip 5mm

\noindent The above corollary is proved in section 6 where we
consider quintic fields $L = \mathbb{Q}(n^{1/5})$. If $F =
\mathbb{Q}(\zeta)$ as before, then $K = L(\zeta_5) =
\mathbb{Q}(n^{1/5},\zeta_5)$ has Galois group over $L$ to be cyclic
of order $4$, generated by $\sigma : \zeta \mapsto \zeta^3$. If
$\tau : n^{1/5} \mapsto \zeta n^{1/5}$ in Gal $(K/F)$, then
Gal$(K/\mathbb{Q})$ is the affine group on $\mathbb{F}_5$; viz.,
$<\sigma> \ltimes <\tau>$ where $\sigma \tau \sigma^{-1} = \tau^3$.
The group $S_K$ is a $\mathbb{Z}_5[G]$-module where $G =$
Gal$(K/L)$. For any $\mathbb{Z}_5[G]$-module $C$, one has a
decomposition
$$C = C^+ \oplus C^- \oplus C^{--}$$
where $C^+ = \{a \in C : {\sigma}a = a \}$, $C^- = \{a \in C :
{\sigma}a =- a \}$ and $C^{--} = \{a \in C : {\sigma^2}a =
-a \}$. Using this module structure and Kummer theory, we prove
in section 6 the following theorem:\\
{\bf Theorem.} If $L = \mathbb{Q}(n^{1/5})$ where $n$ is $5$-th
power free, then $rank~S_L \leq t-s_1+ rank~(S_K/(1- \zeta)S_K)^+
\leq 2t-s_1 \leq rank~S_K$.\\
Further, if $n = p_1^{\alpha_1} \cdots p_m^{\alpha_m}$ where the
primes $p_i \equiv \pm{2}$ or $\equiv -1$ modulo $5$, then\\
$rank~(S_K/(1-
\zeta)S_K)^+ =0$.\\

\noindent Our results along with computation in SAGE, sometimes
allows us to conclude about the existence of ambiguous ideal classes
which are not strongly ambiguous. For example, let $K=\Q(\zeta_{5},
\sqrt[5]{301}), L= \Q(\sqrt[5]{301})$. By theorem \ref{-7mod25}, if
all ambiguous ideal classes are strongly ambiguous, then, $2 \leq
\mathrm{rank} S_{K} \leq 4$. But using SAGE, we see that
$S_{K}=C_{5}$. Thus, in this situation we have ambiguous ideal
classes which are not strongly ambiguous and $t=s_{1}=1$. By
corollary \ref{t=s1} and theorem \ref{r=0}, we see that
$S_{L}=\{1\}$, which is confirmed by SAGE. \vskip 5mm

\noindent Historically, when $K = \mathbb{Q}(\sqrt{D})$ is a
quadratic field of discriminant $D$, Gauss's genus theory of
quadratic forms determines the {\rm rank} of the $2$-Sylow subgroup
of the ideal class group of $K$. C.S.Herz (\cite{Herz}) proved that
this {\rm rank} is $d-1$ or $d-2$ where $d = \omega(D)$, the number
of distinct prime divisors of $D$. In a series of papers (see
\cite{Gerth}, \cite{Gerth2}, \cite{Gerth3}), Frank Gerth III proved
several results on pure cubic extensions of $\mathbb{Q}$ and on
cyclic cubic extensions of $\mathbb{Q}$ and also obtained a
generalization of Herz's result for the $3$-Sylow subgroup of the
ideal class group of a cyclic extension of $\mathbb{Q}(\omega)$
where $\omega$ is a primitive $3$-rd root of unity. In two papers,
G.Gras (\cite{Gras1}, \cite{Gras2}) introduced and studied an
increasing filtration to obtain results on the narrow ideal class
group. As mentioned above, our results are proved using a decreasing
filtration and generalize Gerth's results to the case of any odd
prime $l$; they are more complete when $l=5$. \vskip 5mm

\noindent In the 1930's, Redei and Reichardt proved certain results
on class groups of some abelian extensions of $\mathbb{Q}$
(\cite{Redreich}). Curiously, the series of papers by Gerth do not
carry a reference to the old work of Redei and Reichardt.
Conversely, the newer papers which refer to Redei-Reichardt while
addressing similar questions (see, for instance, Greither-Kucera's
paper on the lifted root number conjecture \cite{Greikuc}), do not
seem to be aware of Gerth's work. \vskip 3mm

\noindent Redei matrices are certain square matrices which appeared
classically (see \cite{Redei}) and have been studied by others
(see \cite{Waterhouse},\cite{Kolster},\cite{Lu}) since
then. In our discussion, we construct similar matrices which are
rectangular in general. \vskip 5mm

\noindent Our results have a number of potential applications. For
example, by the class number formula, $\zeta_{K}^{\ast}(0) = \lim_{s
\to 0} s^{-r} \zeta_{K}(0)=- \frac{h_{K}R_{K}}{\omega_{K}}$, where
$r={\rm rank} O_K^{\ast}$, $\omega_{K}$ is the number of roots of
unity in $K$, $h_{K}$ is the class number of $K$ and $R_{K}$ is the
regulator of $K$. \vskip 2mm

\noindent Although Schanuel's conjecture predicts both $R_{K}$ and
$\zeta_{K}^{\ast}(0)$ are transcendental numbers, their quotient is
a rational number. \vskip 2mm

\noindent Number fields of the form
$K=\mathbb{Q}(\zeta_{5})(x^{\frac{1}{5}})$ are included in our study
and the results on $v_5(h_K)$ have potential use in obtaining
information on special values of Dedekind zeta functions. \vskip 2mm

\noindent In fact, for $K$ as above, one can check that $5||
\omega_{K}$ and thus the maximum power of $5$ that divides
$\frac{\zeta_{K}^{\ast}(0)}{R_{K}}$ is given by $v_{5}(h_{K})-1$.
\vskip 3mm

\noindent Another potential application of our results on the
$l$-class group of certain number fields is towards the existence of
$p$-descent for certain elliptic curves. \vskip 3mm

\noindent Indeed, assuming that the $\mathbb{F}_{2}$-rank of the
$4$-class group of $K=\mathbb{Q}(\sqrt{-2n})$ - where $n =
p_{0}p_{1}\cdots p_{k}$ is a product of distinct odd primes with
$p_{i} \equiv 1 \pmod{ 8}$ for $1 \leq i \leq k$ -  is  $0$ if $n
\equiv \pm 3 \pmod{8}$ and $1$ otherwise, Ye Tian showed
(\cite{Yeti}) that the elliptic curves $E^{(m)}/ \Q$ defined by
$my^{2}=x^{3}-x$, where $m = n$ or $2n$ such that $m \equiv 5, 6,$
or $7$ modulo $8$, have first $2$-descent and deduced the BSD
conjecture holds for these elliptic curves. \vskip 3mm

\noindent From our results on $5$-class numbers of fields of the
form $\mathbb{Q}(\zeta_5,n^{1/5})$, we use duality theory to deduce
results on the $5$-class number of the fields $\mathbb{Q}(n^{1/5})$
for some $n$. These have potential applications to the following
work of Calegari-Emerton on modular forms. Calegari and Emerton
showed (\cite{Calemer}) that if the class group of
$\mathbb{Q}(N^{\frac{1}{5}})$ is cyclic for a prime $N$, certain
local extensions of $\mathbb{Q}_5$ coming from normalized cuspidal
Hecke eigenforms are trivial. More precisely,: \vskip 3mm

\noindent Let $f$ be a normalized cuspidal Hecke eigenform of level
$N$. Let $K_{f}$ denote the extension of $\mathbb{Q}_{5}$ generated
by the $q$-expansion coefficients $a_{n}(f)$ of $f$. It is known
that $K_{f}$ is a finite extension of $\mathbb{Q}_{5}$. When $N$ is
a prime and $5 || (N-1)$, it is known due to Mazur that there exists
a unique (upto conjugation) weight $2$ normalized cuspidal Hecke
eigenform defined over $\mathbb{Q}_{5}$, satisfying the congruence
$$a_{l}(f) \equiv 1+l \pmod{\mathfrak{p}}$$
where $\mathfrak{p}$ is the maximal ideal of the ring of integer of
$K_{f}$. It is known that $K_{f}$ is a totally ramified extension of
$\mathbb{Q}_{5}$. Calegari and Emerton showed that if the class
group of $\mathbb{Q}(N^{\frac{1}{5}})$ is cyclic, then $K_{f} =
\mathbb{Q}_{5}$. \vskip 5mm

\section{Notations}
Let $\ell$ be an odd prime number. Let $F$ be a number field and
$K/F$ be a cyclic extension of degree $\ell$ over $F$. Let $C_{K}$
and $C_{F}$ denote the ideal class groups of $K$ and $F$
respectively. Let $S_{K}$ and $S_{F}$ denote their respective Sylow
$l$-subgroups which we sometimes refer to as the $l$-class groups.
The {\rm rank} of $S_{K}$ is defined to be the
$\mathbb{F}_{\ell}$-dimension of $S_{K} \otimes_{\mathbb{Z}_{\ell}}
\mathbb{F}_{\ell}$. \\

\noindent We have a natural action of ${\rm Gal}(K/F)$ on $C_{K}$
and on $S_K$. \\
We assume throughout that $S_F$ is trivial. It is convenient to use
additive notation. Denote by $\tau$ a generator of Gal $(K/F)$. Let
$\zeta_l$ be a fixed primitive $l$-th root of unity. The $l$-class
group $S_K$ is a $\mathbb{Z}_l[\zeta_l]$-module since
$$\mathbb{Z}_l[\zeta_l] \cong \mathbb{Z}_l[Gal(K/F)]/(1+ \tau +
\cdots + \tau^{l-1})$$ as the norm $1+ \tau + \cdots + \tau^{l-1}$
acts trivially on $S_K$. Denote the discrete valuation ring
$\mathbb{Z}_l[\zeta]$ by $R$; its maximal ideal is generated by
$\lambda = 1- \zeta$. As an $R$-module, the $l$-class group $S_K$ of
$K$ decomposes as
$$S_K \cong \mathbb{Z}_l[\zeta_l]/(\lambda^{e_1}) \oplus
\mathbb{Z}_l[\zeta_l]/(\lambda^{e_2}) \oplus \cdots \oplus
\mathbb{Z}_l[\zeta_l]/(\lambda^{e_t})$$ for some $1 \leq e_1 \leq
e_2 \leq \cdots \leq e_t$. Let
$$s_i := |\{e_j : e_j = i \}|$$
so that $t = s_1+s_2 + \cdots + s_{l-2}$ and $s_k =0$ for $k>l-2$
since $(\lambda^{l-1})=(l)$. We have a decreasing filtration
$$S_K \supset \lambda S_K \supset \lambda^2 S_K \supset \cdots$$
Denote by $S_K[\lambda]$, the kernel of multiplication by $\lambda$
on $S_K$; note that $t =$ rank $S_K[\lambda]$. Similarly, it is easy
to see that
$$s_i = {\rm rank} ((S_K[\lambda] \cap \lambda^{i-1}S_K) + \lambda^i
S_K)/\lambda^i S_K.$$ Also, the rank of the
$\mathbb{Z}_l[\zeta_l]$-module $\lambda^{i-1}S_K/\lambda^i S_K$ is
$t-s_1- \cdots-s_{i-1}$ which is called the $\lambda^i$-rank of
$S_K$. \vskip 5mm

\noindent By class field theory, the maximal abelian unramified
extension $M_0$ of $K$ satisfies $C_K \cong {\rm Gal}(M_0/K)$. The
genus field of $K/F$ is the maximal abelian extension $M$ of $F$
which is contained in $M_0$; then ${\rm Gal}(M/F)$ is the
abelianization of ${\rm Gal}(M_0/F)$. Moreover, $C_K/\lambda C_K
\cong {\rm Gal} (M/K)$ and is called the group of genera.

\noindent An ideal class $c$ in $C_{K}$ is said to be {\it
ambiguous} if $\tau c = c$; that is, if $c \in C_K[\lambda]$. Thus,
the subgroup $S_K[\lambda]$ of ambiguous ideal $l$-classes is an
elementary abelian $l$-group whose rank is that of $S_K/\lambda S_K$
(which we have denoted by $t$ above). The rank $t$ is computed using
Hasse's famous formula (~\cite{Hasse}):
$$ t = d+q^{\ast}-(r+1+o)$$
where\\
$d$= number of ramified primes in $K/F$, \\
$r$ = {\rm rank} of the free abelian part of the group of units $E_{F}$ of $F$, \\
$o$ = 1 or 0 according as to whether $F$ contains primitive $\ell$th root of unity or not,\\
$q^{\ast}$ is defined by $[ N_{K/F}(K^{\ast} )\cap E_{F} : N_{K/F}(E_{F})] = \ell^{q^{\ast}}$.\\
More generally, let us define for each $i \leq \ell$, $S_K^i$ to be
the subgroup of ambiguous ideal classes in $\lambda^{i-1}S_K$. Thus,
rank $S_K^i = $ rank $\lambda^{i-1}S_K/\lambda^i S_K$ which is the
$\lambda^i$-rank of $S_K$ (which we observed above to be $t-s_1-
\cdots -s_{i-1}$).\\
There is a subtler notion of {\it strongly ambiguous} ideals. An
ambiguous ideal $I$ is said to be strongly ambiguous if the
principal ideal $(1- \tau)I$ is actually $(1)$. This is a notion
for ideal classes. Thus, an ideal class $a\in C_K$ is said to be a
{\it strongly ambiguous ideal class} if there exist a representative
$\mathfrak{a} \in I_K$ for $a$ such that
$(1-\tau) \mathfrak{a} = (1)$. \\
The subgroup $S_{K,s}$ of strongly ambiguous ideal classes in $S_K$
has rank given by a similar formula as above:
$$ {\rm rank} \text{ } S_{K,s} = d + q -(r+1+o) $$
where $q$ is given by $[N_{K/F}(E_{K}) \cap E_{F}: N_{K/F}(E_{F})]=
\ell^{q}$. \vskip 5mm

\section{Cyclic extensions of degree $\ell$}

\noindent In this section we give a formula for the {\rm rank} of $S_{K}$,
where $K/F$ is a Galois extension of odd prime degree $\ell$. \\
{\it Throughout, we assume $S_{F} = \{1\}$.}\\
The proof is an easy generalization of Theorems 3.1 and 4.1 of
\cite{Gerth}.

\begin{prop}\label{prop1}
Let $K$ be a cyclic extension of degree $\ell$ of a number field $F$
for which the $l$-class group $S_{F} = \{1\}$. Let us denote by $t$ the {\rm rank} of the group of
ambiguous ideal classes $S_{K}[\lambda]$ in $S_K$, and by $s_{i}$,
the {\rm rank} of $(\lambda^{i-1}S_{K}[\lambda]+\lambda^{i}S_{K})/\lambda^{i}S_{}$. Then
$${\rm rank} \text{ } S_{K}=
(\ell -1)t - (\ell -2)s_{1}- (\ell -3)s_{2}- \cdots -s_{\ell -2}.$$
Further, $S_{K}$ is isomorphic to the direct product of an
elementary abelian $\ell$-groups of {\rm rank} $s_1$ and an abelian $\ell$ group of {\rm rank}
$$(\ell
-1)(t-s_{1}) - (\ell -3)s_{2}- (\ell -4)s_{3}- \cdots -s_{\ell
-2}.$$
\end{prop}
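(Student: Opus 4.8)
The plan is to read off both statements directly from the structure theorem
$$S_K \cong R/(\lambda^{e_1}) \oplus \cdots \oplus R/(\lambda^{e_t}), \qquad 1 \leq e_1 \leq \cdots \leq e_t,$$
by computing the $\mathbb{F}_\ell$-dimension of $S_K \otimes_{\mathbb{Z}_\ell} \mathbb{F}_\ell$ one summand at a time. The central observation is that for a single summand $R/(\lambda^e)$ with $1 \leq e \leq \ell-1$, the underlying $\mathbb{Z}_\ell$-module is free of rank $e$ over $\mathbb{Z}_\ell$ (since $R = \mathbb{Z}_\ell[\zeta]$ is free of rank $\ell-1$ and $(\lambda^{\ell-1}) = (\ell)$), so $R/(\lambda^e) \otimes_{\mathbb{Z}_\ell} \mathbb{F}_\ell$ has $\mathbb{F}_\ell$-dimension $e$ when $1 \leq e \leq \ell-2$, while the case $e = \ell-1$ gives $R/(\ell) \cong \mathbb{F}_\ell^{\ell-1}$, hence $\mathbb{F}_\ell$-dimension $\ell-1$. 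Summing over all $t$ summands gives $\mathrm{rank}\, S_K = \sum_{j=1}^t \min(e_j, \ell-1)$; but since $s_k = 0$ for $k > \ell-2$ (as noted in the excerpt, because $(\lambda^{\ell-1}) = (\ell)$ forces each $e_j \leq \ell-2$), we have $\mathrm{rank}\, S_K = \sum_j e_j$.

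First I would rewrite this sum in terms of the $s_i$. By definition $s_i$ counts the summands with $e_j = i$, so $\sum_j e_j = \sum_{i=1}^{\ell-2} i\, s_i$, and $t = \sum_{i=1}^{\ell-2} s_i$. To match the asserted form I would eliminate the high-index $s_i$ using these two identities: writing $\sum_i i\, s_i = (\ell-1)\sum_i s_i - \sum_i (\ell-1-i) s_i = (\ell-1)t - \sum_{i=1}^{\ell-2}(\ell-1-i)s_i$, and noting the $i = \ell-2$ term of the last sum vanishes, yields exactly
$$\mathrm{rank}\, S_K = (\ell-1)t - (\ell-2)s_1 - (\ell-3)s_2 - \cdots - s_{\ell-2}.$$
This is the claimed rank formula, and this portion is essentially a bookkeeping identity, so I expect no difficulty here.

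For the direct-product decomposition I would separate the $s_1$ summands with $e_j = 1$, namely the copies of $R/(\lambda) \cong \mathbb{F}_\ell$, from the rest. The copies of $R/(\lambda)$ together form an elementary abelian $\ell$-group of rank $s_1$, which is the asserted first factor. The complementary factor is the direct sum of the summands $R/(\lambda^{e_j})$ with $e_j \geq 2$, and its $\mathbb{F}_\ell$-rank is computed by the same summation as above but now ranging only over $i \geq 2$: this gives $(\ell-1)(t - s_1) - \sum_{i=2}^{\ell-2}(\ell-1-i)s_i$, which is precisely
$$(\ell-1)(t-s_1) - (\ell-3)s_2 - (\ell-4)s_3 - \cdots - s_{\ell-2}.$$
The only point requiring care is interpreting $s_i$ correctly in the hypothesis as the rank of $(\lambda^{i-1}S_K[\lambda] + \lambda^i S_K)/\lambda^i S_K$, and checking this agrees with the counting quantity $|\{j : e_j = i\}|$ from the structure theorem; this compatibility is exactly the identity $s_i = \mathrm{rank}\,((S_K[\lambda] \cap \lambda^{i-1}S_K) + \lambda^i S_K)/\lambda^i S_K$ already recorded in the Notations section, so I would simply invoke it. The main (and only genuine) obstacle is thus verifying that the module-theoretic definition of $s_i$ coincides with the summand count; once that identification is in hand, both formulas follow from the elementary arithmetic of $\mathbb{Z}_\ell$-ranks of the cyclic pieces.
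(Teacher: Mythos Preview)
Your argument for the rank formula is essentially the paper's own: both count $\dim_{\mathbb{F}_\ell} S_K/\ell S_K$ summand by summand from the $R$-module decomposition, and the algebraic rearrangement into $(\ell-1)t - \sum (\ell-1-i)s_i$ is the same. One small slip: $R/(\lambda^e)$ for $1 \leq e \leq \ell-1$ is not free over $\mathbb{Z}_\ell$ (it has order $\ell^e$); what you mean, and what you actually use, is that it is already an $\mathbb{F}_\ell$-vector space of dimension $e$, since $\ell \in (\lambda^{\ell-1}) \subset (\lambda^e)$ kills it. Also note that the step ``$s_k=0$ for $k>\ell-2$, hence $\mathrm{rank}\,S_K=\sum e_j$'' leans on a claim in the Notations that is not actually needed: the formula $(\ell-1)t-\sum_{i\le \ell-2}(\ell-1-i)s_i$ equals $\sum_j \min(e_j,\ell-1)$ regardless of whether any $e_j\ge \ell-1$, so you can bypass that assumption entirely.

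Where you genuinely diverge from the paper is in the direct-product statement. You simply peel off the $s_1$ summands with $e_j=1$ from the explicit structure-theorem decomposition and observe that these give the elementary abelian factor, then recount the remaining summands. The paper instead works intrinsically with the filtration $S_K\supset\lambda S_K\supset\cdots$: it builds, for each $i$, subgroups $W_i=(\lambda^{i-1}S_K)[\lambda]+\lambda^iS_K$ and a complement $R_i$ via the multiplication-by-$\lambda$ map $\lambda_i^\ast$, obtains $S_K\cong R_1\oplus H_1$ with $H_1\subset S_K[\lambda]$ of rank $s_1$, and then computes $\mathrm{rank}\,R_1$ by telescoping along the filtration. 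Your route is shorter and perfectly adequate for the proposition as stated. The paper's longer route exists because the subgroups $R_i$, $W_i$, $H_i$ it constructs are not throwaway scaffolding: they are used verbatim in Section~4 to define the tower of genus fields $M_i$ (with $\mathrm{Gal}(M_{i+1}/K)\cong R_i/\lambda^iS_K$) and to set up the matrices whose ranks compute the $s_i$. The paper's own remark after the proof says exactly this. So your argument proves the proposition cleanly, but if you continue into the later sections you will still need to introduce those filtration pieces.
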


\begin{proof}
For $R = \mathbb{Z}_l[\zeta]$, the $R$-module decomposition
$$S_K = \oplus_{i=1}^t R/\lambda^{e_i} R = \oplus_i (R/\lambda^i
R)^{s_i}$$ where $1 \leq e_1 \leq \cdots \leq e_t$ and
$$s_i =|\{j: e_j = i \}|,$$
it follows that
$${\rm rank} S_K = \sum_i |\{j: e_j \geq i\}| = t+(t-s_1)+(t-s_1-s_2)+
\cdots + (t-s_1-s_2- \cdots- s_{l-2})$$
$$ = (l-1)t-(l-2)s_1-(l-3)s_2 \cdots - s_{l-2}.$$
In order to get the direct sum decomposition, we consider the
filtration
$$S_K \supset \lambda S_K \supset \cdots \supset \lambda^{l-1}S_K =
lS_K$$ and the homomorphism
$$\lambda_{i}^{\ast} : \lambda^{i-1}S_{K}/\lambda^{i}S_{K} \twoheadrightarrow \lambda^{i}S_{K}/\lambda^{i+1}S_K$$
induced by multiplication by $\lambda$.\\
As
$\lambda^{i-1}S_{K}/\lambda^{i}S_{K}$ are elementary abelian
$\ell$-groups, they can be viewed as vector spaces over
$\mathbb{F}_{l}$. Then $\lambda_{i}^{\ast}$ is a surjective, vector
space homomorphism. Hence there exists groups $R_{i},W_{i}$ such
that
$$\lambda^{i} S_{K} \subset R_{i},W_{i} \subset \lambda^{i-1} S_{K}$$
and so that $\lambda_i^{\ast}$ gives an isomorphism between
$R_{i}/\lambda^{i}S_{K}$ and $\lambda^{i}S_{K}/\lambda^{i+1}S_{K}$
and
$$ ker \lambda_{i}^{\ast} = W_{i}/\lambda^{i} S_{K}.$$
Therefore,
$$ R_{i}+W_{i} = \lambda^{i-1}S_{K} \text{     and     } R_{i} \cap W_{i} = \lambda^{i}
S_{K}.$$ Clearly $W_i = (\lambda^{i-1}S_{K})[\lambda] +
\lambda^{i}S_{K}$ from the definition of $\lambda_i^{\ast}$.\\
So, there exists a subgroup $H_{i} \subset
(\lambda^{i-1}S_{K})[\lambda]$, such that
$$ W_{i}= H_{i} \oplus \lambda^{i} S_{K},$$
with $H_{i} \cong (\lambda^{i-1}S_{K}[\lambda] + \lambda^{i}S_{K})/\lambda^{i}S_{K}$. Then
$$\lambda^{i-1}S_{K} = R_{i}+W_{i} = R_{i} + (H_{i} \oplus \lambda^{i}S_{K}) \cong R_{i} \oplus H_{i}$$
since  $R_{i} \cap W_{i} = \lambda^{i}S_{K}$. In particular, for
$i=1$, we get,
$$ S_{K} \cong R_{1} \oplus H_{1}.$$

\noindent Recall that $s_{i} = {\rm rank}
(\lambda^{i-1}S_{K}[\lambda] + \lambda^{i}S_{K})/\lambda^{i}S_{K}$;
thus
$$s_i = {\rm rank} \text{ } H_{i}  =
{\rm rank} \text{ } W_{i}/\lambda^{i}S_{K}.$$ {\it Thus, the
proposition will follow if we can prove:} $${\rm rank}~ R_{1} =
(\ell -1)(t-s_{1}) - (\ell -3)s_{2}- (\ell -4)s_{3}- \cdots -
s_{\ell -2}.$$ Since $\ell H_{i} = \{ 1\}$ and $\ell S_{K}
=\lambda^{\ell-1} S_{K}$,
we have:\\
$${\rm rank} R_{1} = {\rm rank} R_{1}/\ell R_{1} = {\rm rank} R_{1}/\ell S_{K} = {\rm rank} R_{1}/ \lambda^{\ell-1}S_{K}$$
$$ = {\rm rank} R_{1}/\lambda S_{K} + {\rm rank} \lambda S_{K}/\lambda^{2}S_{K} +\cdots + {\rm rank} \lambda^{\ell-2}S_{K}/\lambda^{\ell-1}S_{K}$$
$$= 2\cdot {\rm rank} R_{1}/\lambda S_{K} + {\rm rank} R_{2}/\lambda^{2}S_{K} + \cdots + {\rm rank} R_{\ell-2}/\lambda^{\ell-2}S_{K}, $$
since  $R_{i}/\lambda^{i}S_{K} \cong \lambda^{i}S_{K}/\lambda^{i+1}S_{K}$.\\
Now,
 $${\rm rank} R_{1}/\lambda S_{K} = {\rm rank} S_{K}/\lambda S_{K} - {\rm rank} W_{1}/\lambda S_{K} =
 t-s_{1}.$$
Similarly,
$${\rm rank} R_{i}/\lambda^{i}S_{K} = {\rm rank} \lambda^{i-1}S_{K}/\lambda^{i}S_{K} -
{\rm rank} W_{i}/\lambda^{i}S_{K}$$
 $$ = {\rm rank}
R_{i-1}/\lambda^{i-1}S_{K} -s_{i} = t-s_{1}-s_{2} -\cdots
-s_{i}.$$ Putting all of these together, we get
$${\rm rank} R_{1} = (\ell -1)(t-s_{1}) - (\ell -3)s_{2}- (\ell -4)s_{3}- \cdots -s_{\ell -2}$$ and
$${\rm rank} S_{K} = (\ell -1)t - (\ell -2)s_{1}- (\ell -3)s_{2}- \cdots -s_{\ell -2}.$$
\end{proof}

\noindent {\bf Remark.} We saw in the beginning of the proof above
that from the decomposition $S_K \cong R/\lambda^{e_{1}}R \times
\cdots \times R/\lambda^{e_{t}}R$, where $R = \mathbb{Z}_l[\zeta_l]$
and $\lambda =  1- \zeta_l$, one can easily find the formula of the
rank by simple counting. We have given the above proof as some
ingredients of the proof like the subgroups $R_i$ and $W_i$ will be
used later in the construction of genus fields.

\noindent We point out the following special cases of interest; the
first corollary below is immediate:

\begin{cor}
If $t=s_{1}$, then $S_{K}$ is an elementary abelian $\ell$ group of
{\rm rank} $t$.
\end{cor}

\begin{cor}
For $i \geq 1$, we have
$$ {\rm rank} \lambda^i S_{K}/\lambda^{i+1}S_{K} = t - s_{1}-\cdots -s_{i}.$$
In particular, $t-s_{1}-\cdots -s_{i} \geq 0$ for all $i$ and so, we
observe that,
$$0 \leq s_{i} \leq t-s_{1} - \cdots -s_{i-1}.$$
\end{cor}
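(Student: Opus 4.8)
The plan is to read the result directly off the $R$-module decomposition $S_K \cong \bigoplus_i (R/\lambda^i R)^{s_i}$ with $R = \mathbb{Z}_l[\zeta]$, rather than re-deriving anything through genus theory. First I would record the behaviour of a single cyclic summand. For $M = R/\lambda^e R$ and any $j \geq 0$, multiplication by $\lambda^j$ identifies $\lambda^j M$ with $R/\lambda^{e-j}R$ when $j < e$ and with $0$ when $j \geq e$. Since $R$ is a discrete valuation ring with maximal ideal $(\lambda)$, every nonzero cyclic $R$-module has $\mathbb{F}_l$-rank exactly $1$; hence the graded piece $\lambda^j M/\lambda^{j+1}M$ has rank $1$ precisely when $e \geq j+1$ and rank $0$ otherwise.

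Next I would invoke additivity of rank over direct sums. Applying the single-summand computation to each of the $s_i$ copies of $R/\lambda^i R$ gives
$${\rm rank}\ \lambda^j S_K/\lambda^{j+1}S_K = \sum_{i \geq j+1} s_i = (s_1 + \cdots + s_{l-2}) - (s_1 + \cdots + s_j) = t - s_1 - \cdots - s_j,$$
using $t = s_1 + \cdots + s_{l-2}$ and $s_k = 0$ for $k > l-2$. This is exactly the claimed formula. Even more briefly, the identity is already contained in the proof of Proposition \ref{prop1}: there one finds ${\rm rank}\ R_i/\lambda^i S_K = t - s_1 - \cdots - s_i$ together with the isomorphism $R_i/\lambda^i S_K \cong \lambda^i S_K/\lambda^{i+1}S_K$, so one may simply cite that computation with the index shifted by one.

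For the ``in particular'' clause I would use that a rank is a nonnegative integer, whence $t - s_1 - \cdots - s_i = {\rm rank}\ \lambda^i S_K/\lambda^{i+1}S_K \geq 0$ for every $i \geq 1$. Rewriting this inequality isolates $s_i \leq t - s_1 - \cdots - s_{i-1}$, while $s_i \geq 0$ is immediate from its definition as a cardinality; together these yield the stated chain $0 \leq s_i \leq t - s_1 - \cdots - s_{i-1}$.

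There is no serious obstacle here: the content is elementary bookkeeping over a discrete valuation ring. The only point demanding a little care is the vanishing case $e \leq j$, in which $\lambda^j M = 0$ contributes nothing to the graded piece; getting the cutoff $i \geq j+1$ (rather than $i \geq j$) right is precisely what produces the correct partial sum $t - s_1 - \cdots - s_j$.
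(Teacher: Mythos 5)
Your proof is correct, and it is essentially the paper's: the paper disposes of this corollary by citing the computation ${\rm rank}\,R_i/\lambda^i S_K = t - s_1 - \cdots - s_i$ together with the isomorphism $R_i/\lambda^i S_K \cong \lambda^i S_K/\lambda^{i+1}S_K$ from the proof of Proposition 3.1 --- exactly the alternative you mention (though no index shift is actually needed there) --- while your primary argument, counting graded pieces of $\bigoplus_i (R/\lambda^i R)^{s_i}$ directly, is the ``simple counting'' route the paper's own remark after that proposition explicitly endorses. The concluding deduction $0 \leq s_i \leq t - s_1 - \cdots - s_{i-1}$ from nonnegativity of rank matches the paper as well.
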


\begin{proof}
The proof of this corollary is contained in the proof of Theorem
\ref{prop1}.
\end{proof}

\begin{cor}
For some $1 \leq i \leq (\ell-2)$, if we have $\lambda^i S_{K} =
lS_{K}$, then $s_{j} = 0$ for $j \geq i+1$ and $t=s_{1}+\cdots
+s_{i}$.
\end{cor}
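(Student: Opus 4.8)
The plan is to reduce the hypothesis to a statement purely about the decreasing filtration and then quote the rank formula for its successive quotients established in the preceding corollary. First I would record the basic fact that $\ell S_K = \lambda^{\ell-1}S_K$: since $(\lambda^{\ell-1}) = (\ell)$ as ideals of $R = \mathbb{Z}_\ell[\zeta]$, we may write $\ell = u\lambda^{\ell-1}$ for some unit $u \in R$, and because multiplication by a unit is an $R$-module automorphism of $S_K$, the two subgroups $\ell S_K$ and $\lambda^{\ell-1}S_K$ coincide. Hence the hypothesis $\lambda^i S_K = \ell S_K$ is equivalent to $\lambda^i S_K = \lambda^{\ell-1}S_K$.

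Next I would exploit that the filtration $S_K \supseteq \lambda S_K \supseteq \cdots \supseteq \lambda^{\ell-1}S_K$ is decreasing. For $1 \leq i \leq \ell-2$ the chain $\lambda^i S_K \supseteq \lambda^{i+1}S_K \supseteq \cdots \supseteq \lambda^{\ell-1}S_K$ has equal first and last terms by the previous paragraph, so every inclusion in it is an equality; in particular each successive quotient $\lambda^j S_K/\lambda^{j+1}S_K$ is trivial for $i \leq j \leq \ell-2$.

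Finally I would invoke the preceding corollary, which gives ${\rm rank}\, \lambda^j S_K/\lambda^{j+1}S_K = t - s_1 - \cdots - s_j$. Applying this at $j = i$ and using that the quotient vanishes yields $t - s_1 - \cdots - s_i = 0$, that is, $t = s_1 + \cdots + s_i$. Comparing the consecutive vanishing relations at $j$ and $j-1$ (or, equivalently, combining $t = s_1 + \cdots + s_i$ with the global identity $t = s_1 + \cdots + s_{\ell-2}$ together with the nonnegativity $s_j \geq 0$ from the same corollary) forces $s_j = 0$ for all $j \geq i+1$. The argument is essentially formal; the only step requiring any care is the identification $\ell S_K = \lambda^{\ell-1}S_K$ through the unit relation $\ell = u\lambda^{\ell-1}$, after which the conclusion is immediate from the rank formula for the graded pieces already proved.
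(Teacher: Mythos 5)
Your argument is correct and follows the same route as the paper: identify $\ell S_K$ with $\lambda^{\ell-1}S_K$, use the decreasing filtration to force equality of all intermediate terms $\lambda^j S_K$ for $j \geq i$, and then read off the vanishing of $t - s_1 - \cdots - s_j$ from the rank formula for the graded quotients in the preceding corollary. The only difference is that you spell out the unit relation $\ell = u\lambda^{\ell-1}$ explicitly, which the paper takes for granted from its earlier remark that $(\lambda^{\ell-1}) = (\ell)$.
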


\begin{proof}
Since $\lambda^i S_{K}^{\Delta^{i}} = l S_{K}$, we see that
$$\lambda^i S_{K} = \lambda^{i+1} S_{K} = \cdots = \lambda^{l-1}S_{K} = l S_{K}.$$
So, the quotients $\lambda^j S_{K}/\lambda^{j+1}S_{K}$ are trivial
for all $j \geq i$. The previous corollary implies the assertion
now.
\end{proof}

\begin{cor}
The {\rm rank} of $S_{K}$ satisfies the bounds
$$ 2t-s_{1} \leq {\rm rank} S_{K} \leq (\ell-1)t - (\ell-2)s_{1}.$$
Moreover, if $s_{2}=t-s_{1}$, then the lower bound is achieved; that
is, {\rm rank} of $S_{K}$ equals $2t-s_{1}$. Further, if
$s_{2}=s_{3}=\cdots=s_{\ell-2}=0$, then the upper bound is achieved,
that is,{\rm rank} of $S_{K}$ is $(\ell-1)t - (\ell-2)s_{1}$.
\end{cor}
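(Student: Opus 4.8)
The plan is to derive both bounds directly from the rank formula established in Proposition \ref{prop1}, namely
$$\mathrm{rank}\, S_K = (\ell-1)t - (\ell-2)s_1 - (\ell-3)s_2 - \cdots - s_{\ell-2},$$
together with the two elementary facts recorded earlier: that every $s_i \geq 0$ (from the preceding corollary) and that $t = s_1 + s_2 + \cdots + s_{\ell-2}$ (from the Notations section). No genus-theoretic input is needed; the whole argument is a rearrangement of this single formula.

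For the upper bound, I would observe that the coefficient $(\ell-1-i)$ of $s_i$ in the subtracted part is strictly positive for each $i$ with $2 \leq i \leq \ell-2$. Hence discarding the terms indexed by $i \geq 2$ can only increase the right-hand side, which yields $\mathrm{rank}\, S_K \leq (\ell-1)t - (\ell-2)s_1$ at once. Since each dropped term is nonnegative and carries a strictly positive coefficient, equality forces $s_2 = s_3 = \cdots = s_{\ell-2} = 0$, giving the stated condition for the upper bound to be attained.

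For the lower bound, I would compute the difference $\mathrm{rank}\, S_K - (2t - s_1)$. Substituting $t = \sum_{i=1}^{\ell-2} s_i$ into the rank formula and collecting the coefficient of each $s_i$, the contributions from $i = 1$ and $i = 2$ cancel identically and one is left with
$$\mathrm{rank}\, S_K - (2t - s_1) = \sum_{i=3}^{\ell-2}(i-2)\, s_i.$$
As $i - 2 > 0$ and $s_i \geq 0$ for every $i \geq 3$, the right-hand side is nonnegative, proving $2t - s_1 \leq \mathrm{rank}\, S_K$. Equality holds precisely when $s_3 = s_4 = \cdots = s_{\ell-2} = 0$; using $t = s_1 + \cdots + s_{\ell-2}$ once more, this is equivalent to $s_2 = t - s_1$, matching the hypothesis in the statement.

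The computation is entirely mechanical, so I do not anticipate a genuine obstacle; the only point demanding care is the bookkeeping of the coefficients $(\ell-1-i)$ after the substitution $t = \sum_i s_i$, and in particular verifying that the $s_1$- and $s_2$-contributions to the lower-bound difference \emph{vanish} (rather than merely being nonnegative). Keeping the index ranges straight, namely that $s_i$ runs over $1 \leq i \leq \ell-2$ with $s_k = 0$ for $k > \ell-2$, is exactly what produces this clean cancellation and hence the sharp constants $2$ and $\ell-1$ in the two bounds.
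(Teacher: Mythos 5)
Your proof is correct and takes essentially the same route as the paper: both bounds are extracted mechanically from the rank formula of Proposition~\ref{prop1} together with $s_i\geq 0$ and $t=\sum_i s_i$ (equivalently, the nonnegativity of the tails $t-s_1-\cdots-s_i$ used in the paper's Corollary~3.3). Your explicit identity $\mathrm{rank}\,S_K-(2t-s_1)=\sum_{i\geq 3}(i-2)s_i$ is just a rearrangement of the paper's telescoping expression $\mathrm{rank}\,S_K=t+\sum_i(t-s_1-\cdots-s_i)$, and it even upgrades both equality conditions to ``if and only if'' statements.
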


\begin{proof}
The upper bound of {\rm rank} $S_{K}$ is immediate from the proposition because
$ {\rm rank} S_{K}=(l-1)t-(l-2)s_1- \cdots - s_{l-2}$. The lower bound  follows since
${\rm rank} S_{K} = t+ \sum_{i=1}^{\ell-2}(t-s_{1}-\cdots-s_{i}) \geq 2t-s_{1}$ (since $t-s_{1}-\cdots -s_{i} \geq 0$). Combining these two facts we obtain the bound for {\rm rank} of $S_{K}$.\\
Since $t-s_{1}-s_{2}=0$, we see that
$s_{3}=s_{4}=\cdots=s_{\ell-2}=0$ (follows from Corollary 3.3).
Substituting these values of $s_{i}$ in the formula for the {\rm
rank} of $S_{K}$ in Theorem \ref{prop1}, we obtain that, ${\rm rank}
S_{K}= 2t-s_{1}$.
\end{proof}

\noindent {\bf Remark.} The above bounds constitute an improvement
of the bounds obtained in Gerth's paper \cite{Gerth}[Corollary 2.5];
he obtains, $t \leq {\rm rank} S_{K} \leq (\ell-1)t$. \vskip 5mm

\section{When $F$ contains $\zeta_{\ell}$ and has class number
coprime to $\ell$}

\noindent
We recall the earlier notations: \\
$K$ is a cyclic extension of degree $l$ (an odd prime) over a number
field $F$ with trivial $l$-class group which, we now assume,
contains a primitive $l$-th roots of unity $\zeta$. By class field
theory, the maximal abelian unramified extension $M_0$ of $K$
satisfies $C_K \cong {\rm Gal}(M_0/K)$. The genus field of $K/F$ is
the maximal abelian extension $M$ of $F$ which is contained in
$M_0$; then ${\rm Gal}(M/F)$ is the abelianization of ${\rm
Gal}(M_0/F)$. Moreover, $C_K/\lambda C_K \cong {\rm Gal} (M/K)$ and
is called the group of genera. By Kummer theory, $K = F(x^{1/l})$
for some $x \in F^{\ast} - (F^{\ast})^l$. Being the $l$-Sylow
subgroup of the group $C_K/\lambda C_K$, the group $S_K/\lambda S_K$
is a direct summand of it. Thus, there is a unique subfield $M_1$ of
$M$ which contains $K$ and satisfies $S_K/\lambda S_K \cong {\rm
Gal}(M_1/K)$; thus,
note that Gal $(M_1/K)$ is elementary abelian of rank $t$.\\
Recall also from the proof of \ref{prop1} that for $i \geq 1$, there
is a subgroup $H_i \subset (\lambda^{i-1}S_K)[\lambda]$ such that
$H_i \cap \lambda^i S_K = (0)$ and $H_i \cong
((\lambda^{i-1}S_K)[\lambda] + \lambda^i S_K)/\lambda^i S_K$. Note
that $s_i = {\rm rank} H_i$ for $i \geq 1$.\\
The first theorem below computes the rank $s_1$ of $H_1$ in terms of
the rank of a certain matrix with entries in $\mathbb{F}_l$.\\
Firstly, by Kummer theory, there exist $x_{1},\cdots,x_{t} \in
K^{\ast} - (K^{\ast})^l$ such that $M_{1}=
K(x_{1}^{\frac{1}{\ell}},\cdots,x_{t}^{\frac{1}{\ell}})$. In the
following theorem, we obtain a $t \times t$ matrix over
$\mathbb{F}_{\ell}$ whose {\rm rank} equals $s_1$. The entries of
this matrix involve the Artin symbols of the generators $x_i$'s.\\
Note that the Artin symbols
$\bigg(\frac{K(x_{i}^{\frac{1}{\ell}})/K} {I}\bigg)$ are defined for
any ideal $I$ of $K$ since the conductor of the field $M_1$ is
trivial.

\begin{thm}\label{thm2}
Let $F,K,M,M_{1}$ be as above. Fix representative ideals
$\mathfrak{a}_{1},\cdots,\mathfrak{a}_{t}$ whose ideal classes form
a basis for the group $S_{K}[\lambda]$. Denote by $\mu_{ij}$, the
Artin symbol $\bigg(\frac{K(x_{i}^{\frac{1}{\ell}})/K}
{\mathfrak{a}_{j}}\bigg).$ Write $\alpha_{ij} \in \mathbb{F}_l$ for
which $\zeta^{\alpha_{ij}}$ is the power residue symbol
$(x_{i}^{\frac{1}{\ell}})^{\mu_{ij}-1}$. If $A_1$ is the matrix
$(\alpha_{ij}) \in M_t(\mathbb{F}_l)$, then $${\rm rank} A_{1}= {\rm
rank} H_1 = s_1.$$
\end{thm}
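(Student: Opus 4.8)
The plan is to realize the matrix $A_1$ as the matrix of the Kummer pairing attached to the extension $M_1/K$, and then, via class field theory, to identify the Artin images of the $\mathfrak{a}_j$ with the projection of $S_K[\lambda]$ into $S_K/\lambda S_K$. First I would set $B = \langle x_1, \ldots, x_t\rangle \subset K^\ast/(K^\ast)^\ell$, the Kummer group of $M_1/K$. Since $[M_1:K]=\ell^t$ and ${\rm Gal}(M_1/K)$ is elementary abelian of rank $t$, the group $B$ is an $\mathbb{F}_\ell$-vector space of dimension $t$ with basis the classes of $x_1,\ldots,x_t$. Kummer theory then furnishes a perfect pairing
$$\langle\,,\,\rangle : {\rm Gal}(M_1/K)\times B \to \mu_\ell,\qquad \langle\sigma,y\rangle = \sigma(y^{1/\ell})/y^{1/\ell},$$
inducing an isomorphism ${\rm Gal}(M_1/K)\xrightarrow{\sim}\mathrm{Hom}(B,\mu_\ell)$; evaluating a homomorphism on the basis $\{x_i\}$ and writing each value as a power of $\zeta$ yields coordinates in $\mathbb{F}_\ell^t$.

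Next I would compute the entries. Writing $\sigma_j = \left(\frac{M_1/K}{\mathfrak{a}_j}\right)$ for the Artin symbol of $\mathfrak{a}_j$ in ${\rm Gal}(M_1/K)$, functoriality of the Artin map under restriction of the top field gives $\sigma_j|_{K(x_i^{1/\ell})} = \mu_{ij}$, whence
$$\langle\sigma_j, x_i\rangle = \sigma_j(x_i^{1/\ell})/x_i^{1/\ell} = (x_i^{1/\ell})^{\mu_{ij}-1} = \zeta^{\alpha_{ij}}.$$
Thus the $j$-th column of $A_1$ is exactly the coordinate vector of $\sigma_j$ under ${\rm Gal}(M_1/K)\cong\mathrm{Hom}(B,\mu_\ell)\cong\mathbb{F}_\ell^t$. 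Because this identification is an isomorphism, the column rank of $A_1$ equals $\dim_{\mathbb{F}_\ell}\langle\sigma_1,\ldots,\sigma_t\rangle$, the dimension of the subspace of ${\rm Gal}(M_1/K)$ generated by the Artin images of the $\mathfrak{a}_j$; hence ${\rm rank}\,A_1 = \dim\langle\sigma_j\rangle$.

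It remains to identify this dimension with $s_1 = {\rm rank}\,H_1$, and this is the step I expect to demand the most care. Under the Artin isomorphism $C_K\cong{\rm Gal}(M_0/K)$, restriction to $M_1$ is the quotient by ${\rm Gal}(M_0/M_1)$, and because $M_1$ is by construction the subfield realizing the direct summand $S_K/\lambda S_K$ of $C_K/\lambda C_K$, the composite $S_K\hookrightarrow C_K \to {\rm Gal}(M_1/K)$ is the canonical projection $S_K\twoheadrightarrow S_K/\lambda S_K$. Therefore $\sigma_j$ is the image of the class $[\mathfrak{a}_j]\in S_K[\lambda]$ in $S_K/\lambda S_K$. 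Since the $[\mathfrak{a}_j]$ form a basis of $S_K[\lambda]$, the subspace $\langle\sigma_j\rangle$ is precisely the image of $S_K[\lambda]$ in $S_K/\lambda S_K$, namely $(S_K[\lambda]+\lambda S_K)/\lambda S_K\cong H_1$. Combining the steps,
$${\rm rank}\,A_1 = \dim\langle\sigma_j\rangle = {\rm rank}\,(S_K[\lambda]+\lambda S_K)/\lambda S_K = s_1 = {\rm rank}\,H_1,$$
as desired. The two delicate points to verify carefully are the functoriality relation $\sigma_j|_{K(x_i^{1/\ell})}=\mu_{ij}$ and the assertion that the summand $S_K/\lambda S_K$ of $C_K/\lambda C_K$ is genuinely cut out by $M_1$, so that the composite above really is the natural projection.
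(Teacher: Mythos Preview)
Your proposal is correct and follows essentially the same route as the paper's own proof: both compose the Artin map $S_K[\lambda]\to S_K/\lambda S_K\cong{\rm Gal}(M_1/K)$ with the Kummer-theoretic identification ${\rm Gal}(M_1/K)\cong\mathbb{F}_\ell^t$, recognize $A_1$ as the matrix of this composite, and then read off $s_1$ from the image $(S_K[\lambda]+\lambda S_K)/\lambda S_K\cong H_1$. The only cosmetic difference is that the paper phrases the last step via the kernel $S_K[\lambda]\cap\lambda S_K$ and rank--nullity, whereas you compute the image directly; the two ``delicate points'' you flag are exactly the ingredients the paper invokes (Artin reciprocity for $\psi_1$ and the defining property of $M_1$).
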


\begin{proof}
As noted above, since $M$ is an unramified extension of $K$ and $K
\subset M_{1} \subset M$, the conductor of $M_{1}/K$ is trivial and
hence, the Artin symbol $\big( \frac{M_{1}/K}{\mathfrak{a}} \big)
\in {\rm Gal}(M_{1}/K)$ is well defined for all ideals
$\mathfrak{a}$ of $K$. We define a map

$$ \psi_{1}: S_{K}[\lambda] \to S_{K} \to S_{K}/\lambda S_{K} \cong
{\rm Gal}(M_{1}/K)$$ which is the composite of the natural
inclusion, the natural surjection and the canonical isomorphism. If
$cl(\mathfrak{a})$ denotes the ideal class of an ideal
$\mathfrak{a}$, and if $cl(\mathfrak{a}) \in S_{K}[\lambda]$, then
we see by Artin reciprocity that $\psi_{1}(cl(\mathfrak{a}))= \big(
\frac{M_{1}/K}{\mathfrak{a}} \big)$ and that
the kernel of $\psi_{1}$ is $S_{K}[\lambda] \cap \lambda S_{K}$.\\
Now $M_{1}= K(x_{1}^{\frac{1}{\ell}},\cdots,x_{t}^{\frac{1}{\ell}})$
and $[M_{1}:K] =\ell^{t}$ imply that there exists an isomorphism
$$\delta_{1}: {\rm Gal}(M_{1}/K)\cong
{\rm Gal}(K(x_{1}^{\frac{1}{\ell}})/K) \times \cdots \times {\rm Gal}(K(x_{t}^{\frac{1}{\ell}})/K).$$
For each $i=1,\cdots,t$, Kummer theory provides an isomorphism
$$\theta_{i}: {\rm Gal}(K(x_{i}^{\frac{1}{\ell}})/K) \to \mathbb{F}_{\ell}$$
$$ \mu \mapsto \alpha_{\mu}$$
where $\zeta^{\alpha_{\mu}} = (x_{i}^{\frac{1}{\ell}})^{\mu-1}$.\\
Define
$$ \phi_{1} := \big( \prod_{i=1}^{t} \theta_{i} \big) \circ \delta_{1} \circ
\psi_{1} : S_{K}[\lambda]  \to \mathbb{F}_{\ell}^{t}.$$ Now,
$S_{K}[\lambda]$ is a vector space over $\mathbb{F}_{\ell}$ (as it
is an elementary abelian $\ell$-group) and $\phi_{1}$ is a vector
space homomorphism;
also $ker \phi_{1} = ker{\psi_{1}} = S_{K}[\lambda] \cap \lambda S_{K}$.\\
Now $A_{1}$ is precisely the matrix of $\phi_{1}$ with respect to
basis $\{ cl(\mathfrak{a_{1}}), \cdots, cl(\mathfrak{a_{t}}) \}$ of
$S_{K}[\lambda]$. Then

$$ {\rm rank}(S_{K}[\lambda] \cap \lambda S_{K}) = {\rm rank}(ker(\phi_{1})) = t- {\rm rank} A_{1}.$$
Equivalently, ${\rm rank} A_{1} = t- {\rm rank}(S_{K}[\lambda] \cap
\lambda S_{K}) $. Since $S_{K}[\lambda]$ is an elementary abelian
$\ell$-group of {\rm rank} $t$ and
$$ H_1 \cong (S_{K}[\lambda] + \lambda S_{K})/\lambda S_{K} \cong S_{K}[\lambda]/ (S_{K}[\lambda] \cap \lambda S_{K}),  $$
then $s_1 = {\rm rank} H_1 = t- {\rm rank}(S_{K}[\lambda] \cap
\lambda S_{K}) = {\rm rank} A_{1}$.
\end{proof}
\vskip 5mm

\noindent The above result for the rank $s_1$ of $H_1$ can be
generalized to a general $s_i$ in the following manner.\\
Recall from the proof of proposition 3.1 that there exists a
subgroup $R_i$ satisfying $\lambda^i \subset R_i \subset
\lambda^{i-1}S_K$ and
$$R_i/\lambda^i S_K \cong \lambda^{i-1}S_K/W_i \cong \lambda^i
S_K/\lambda^{i+1}S_K$$ where the last isomorphism is induced by the
multiplication-by-$\lambda$ map. \\
Now, for $1 \leq i \leq \ell-3$, if we have chosen a genus field
$M_i \subset M$ with ${\rm Gal}(M_{i}/K) \cong
\lambda^{i-1}S_{K}/\lambda^i S_{K}$, there exists - corresponding to
the direct summand $R_{i}/\lambda^i S_{K}^{\Delta^{i}}$ - a unique
field $M_{i+1}$ such that
$$K \subset M_{i+1} \subset M_{i} \subset
M_{1} \subset M~~ and$$ $${\rm Gal}(M_{i+1}/K) \cong R_{i}/\lambda^i
S_{K} \cong \lambda^i S_{K}/\lambda^{i+1}S_{K}.$$ From the above
isomorphism, we see that ${\rm Gal}(M_{i+1}/K)$ is an elementary
abelian $\ell$-group. We have
$$t-s_{1}-\cdots -s_{i}= {\rm rank} ~{\rm Gal}(M_{i+1}/K) = {\rm rank} \lambda^i
S_{K}/\lambda^{i+1}S_{K}.$$ Once again, Kummer theory assures us
elements $y_{1},\cdots,y_{t-s_{1}-\cdots-s_{i}} \in K^{\ast} -
(K^{\ast})^l$ such that $$M_{i+1}=
K(y_{1}^{\frac{1}{\ell}},\cdots,y_{t-s_{1}-\cdots-s_{i}}^{\frac{1}{\ell}}).$$
Fix representative ideals
$\mathfrak{b}_{1},\cdots,\mathfrak{b}_{t-s_{1}-\cdots-s_{i}}$ whose
ideal classes form a basis for the group $(\lambda^i
S_{K})[\lambda]$. With these notations, we prove the following
theorem: \vskip 5mm

\begin{thm}\label{thm3}
Let $\mu_{jk}$ denote the Artin symbol
$\bigg(\frac{K(y_{j}^{\frac{1}{\ell}})/K}
{\mathfrak{b}_{k}}\bigg)$, and let $\beta_{jk} \in \mathbb{F}_l$ for
which $\zeta^{\beta_{jk}}$ is the power residue symbol
$(y_{j}^{\frac{1}{\ell}})^{\mu_{jk}-1}$. For $1 \leq i \leq l-3$, if
$A_{i+1}$ is the matrix $(\beta_{jk}), 1 \leq j,k \leq t-s_1-
\cdots-s_i$ with entries in $\mathbb{F}_l$, then
$$s_{i+1} = {\rm rank} A_{i+1}.$$
\end{thm}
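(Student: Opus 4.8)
The plan is to mimic the proof of Theorem~\ref{thm2} almost verbatim, replacing the bottom layer $S_K$ by the shifted layer $\lambda^i S_K$ and the field $M_1$ by the intermediate field $M_{i+1}$ constructed just above the statement. The essential observation is that the pair $(\lambda^i S_K, M_{i+1})$ plays for $s_{i+1}$ exactly the role that $(S_K, M_1)$ played for $s_1$: we have $\mathrm{Gal}(M_{i+1}/K) \cong \lambda^i S_K / \lambda^{i+1} S_K$, which is elementary abelian of rank $t - s_1 - \cdots - s_i$, and $M_{i+1}$ sits inside the unramified genus field $M$, so the conductor of $M_{i+1}/K$ is trivial and all the Artin symbols $\big(\frac{M_{i+1}/K}{\mathfrak{b}}\big)$ are defined.

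First I would define the analogous composite map
$$\psi_{i+1} : (\lambda^i S_K)[\lambda] \hookrightarrow \lambda^i S_K \twoheadrightarrow \lambda^i S_K / \lambda^{i+1} S_K \cong \mathrm{Gal}(M_{i+1}/K),$$
where the last isomorphism is the Artin reciprocity isomorphism. By construction this sends the class $cl(\mathfrak{b})$ of an ambiguous ideal in $\lambda^i S_K$ to the Artin symbol $\big(\frac{M_{i+1}/K}{\mathfrak{b}}\big)$, and its kernel is $(\lambda^i S_K)[\lambda] \cap \lambda^{i+1} S_K$. Next, using the Kummer presentation $M_{i+1} = K(y_1^{1/\ell}, \ldots, y_{t-s_1-\cdots-s_i}^{1/\ell})$ together with $[M_{i+1}:K] = \ell^{t-s_1-\cdots-s_i}$, I would split $\mathrm{Gal}(M_{i+1}/K)$ as a product of the cyclic groups $\mathrm{Gal}(K(y_j^{1/\ell})/K)$ and apply the Kummer isomorphisms $\theta_j : \mathrm{Gal}(K(y_j^{1/\ell})/K) \to \mathbb{F}_\ell$, $\mu \mapsto \alpha_\mu$ with $\zeta^{\alpha_\mu} = (y_j^{1/\ell})^{\mu - 1}$. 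Composing everything gives a $\mathbb{F}_\ell$-linear map $\phi_{i+1}$ into $\mathbb{F}_\ell^{t-s_1-\cdots-s_i}$ whose matrix with respect to the basis $\{cl(\mathfrak{b}_k)\}$ of $(\lambda^i S_K)[\lambda]$ is precisely $A_{i+1} = (\beta_{jk})$. A rank count then yields $\mathrm{rank}\, A_{i+1} = (t-s_1-\cdots-s_i) - \mathrm{rank}\big((\lambda^i S_K)[\lambda] \cap \lambda^{i+1} S_K\big)$.

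Finally I would identify this number with $s_{i+1}$. Recall from the earlier notation that $s_{i+1} = \mathrm{rank}\, H_{i+1}$, where $H_{i+1} \cong \big((\lambda^i S_K)[\lambda] + \lambda^{i+1} S_K\big)/\lambda^{i+1} S_K \cong (\lambda^i S_K)[\lambda] / \big((\lambda^i S_K)[\lambda] \cap \lambda^{i+1} S_K\big)$. Since $(\lambda^i S_K)[\lambda]$ has rank $t - s_1 - \cdots - s_i$ (it equals the group of ambiguous ideal classes in $\lambda^i S_K$, whose rank was computed above to be the $\lambda^{i+1}$-rank), subtracting the rank of the intersection gives exactly $s_{i+1}$, matching the rank of $A_{i+1}$.

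The point that needs the most care is verifying that $(\lambda^i S_K)[\lambda]$ really has rank $t - s_1 - \cdots - s_i$ and that the kernel of $\psi_{i+1}$ is correctly $(\lambda^i S_K)[\lambda] \cap \lambda^{i+1} S_K$ rather than something larger; this hinges on the fact that $\mathrm{Gal}(M_{i+1}/K)$ captures the quotient $\lambda^i S_K / \lambda^{i+1} S_K$ (and not all of $\lambda^i S_K / \lambda^{i+1} S_K$ twisted by genus data), which is guaranteed by the explicit construction of $M_{i+1}$ from the direct summand $R_i/\lambda^i S_K$ recalled just before the theorem. Everything else is a routine transcription of the $s_1$ argument, so the main obstacle is bookkeeping rather than any new idea; one must simply be confident that the restriction of the Artin map to the $[\lambda]$-torsion of the $i$-th layer has the stated kernel.
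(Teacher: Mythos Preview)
Your proposal is correct and follows essentially the same route as the paper's own proof: define the composite $\psi_{i+1}$ from $(\lambda^i S_K)[\lambda]$ through $\lambda^i S_K/\lambda^{i+1}S_K \cong R_i/\lambda^i S_K \cong \mathrm{Gal}(M_{i+1}/K)$, pass to $\mathbb{F}_\ell^{t-s_1-\cdots-s_i}$ via the Kummer isomorphisms $\theta_j$, observe that $A_{i+1}$ is the matrix of the resulting map $\phi_{i+1}$, and finish with the rank count using $H_{i+1} \cong (\lambda^i S_K)[\lambda]/\big((\lambda^i S_K)[\lambda]\cap\lambda^{i+1}S_K\big)$. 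The only cosmetic difference is that the paper explicitly routes the isomorphism through $R_i/\lambda^i S_K$ before invoking Artin reciprocity, whereas you fold that step into a single identification---but you correctly flag this point in your final paragraph.
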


\begin{proof}
Since $M$ is an unramified extension of $K$ and $K \subset M_{i+1} \subset M$,
the conductor of $M_{i+1}/K$ is $(1)$ and hence the Artin symbol
$\big( \frac{M_{i+1}/K}{\mathfrak{a}} \big) \in {\rm Gal}(M_{i+1}/K)$
is well defined for all ideals $\mathfrak{a}$ of $K$. We define a map

$$ \psi_{i+1}: (\lambda^i S_{K})[\lambda] \to \lambda^i S_{K} \to \lambda^i S_{K}/\lambda^{i+1}S_{K}
\cong R_{i}/\lambda^i S_{K} \cong {\rm Gal}(M_{i+1}/K)$$ which is
the composite of the natural inclusion, the natural surjection and
the canonical isomorphisms. If $cl(\mathfrak{a}) \in (\lambda^i
S_{K})[\lambda]$, then by Artin reciprocity, we see that
$\psi_{i+1}(cl(\mathfrak{a}))= \big( \frac{M_{i+1}/K}{\mathfrak{a}}
\big)$
and that the kernel of $\psi_{i+1}$ is $(\lambda^i S_{K})[\lambda] \cap \lambda^{i+1}S_{K}$.\\
Now $M_{i+1}= K(y_{1}^{\frac{1}{\ell}},\cdots,y_{t-s_{1}-\cdots -s_{i}}^{\frac{1}{\ell}})$ and $[M_{i+1}:K] =\ell^{t-s_{1}-\cdots -s_{i}}$ imply that there exists an isomorphism
$$\delta_{i+1}: {\rm Gal}(M_{i+1}/K)\cong {\rm Gal}(K(y_{1}^{\frac{1}{\ell}})/K) \times \cdots \times {\rm Gal}(K(y_{t-s_{1}-\cdots-s_{i}}^{\frac{1}{\ell}})/K).$$
Once again, Kummer theory provides for each $j \leq
t-s_{1}-\cdots-s_{i}$, an isomorphism
$$\theta_{j}: {\rm Gal}(K(y_{j}^{\frac{1}{\ell}})/K) \to \mathbb{F}_{\ell}$$
$$ \mu \mapsto \alpha_{\mu}$$
where $\zeta^{\alpha_{\mu}} = (y_{j}^{\frac{1}{\ell}})^{\mu-1}$. \\
Define
$$ \phi_{i+1} := \big( \prod_{j=1}^{t-s_{1}-\cdots -s_{i}} \theta_{j} \big) \circ \delta_{i+1} \circ \psi_{i+1} :
(\lambda^i S_{K})[\lambda]  \to \mathbb{F}_{\ell}^{t-s_{1}-\cdots
-s_{i}}.$$ Since $(\lambda^i S_{K})[\lambda]$ is an elementary
abelian $\ell$-group, it may be viewed as a vector space over
$\mathbb{F}_{\ell}$ and, $\phi_{i+1}$ is a vector space
homomorphism. Since $$ker \phi_{i+1} = ker{\psi_{i+1}} = (\lambda^i
S_{K})[\lambda] \cap \lambda^{i+1}S_{K}$$ and since $A_{i+1}$ is
precisely the matrix of $\phi_{i+1}$ with respect to the basis $\{
cl(\mathfrak{b_{1}}), \cdots, cl(\mathfrak{b_{t-s_{1}-\cdots
-s_{i}}}) \}$, we have
$${\rm rank} ((\lambda^i S_{K})[\lambda] \cap \lambda^{i+1}S_{K})
= {\rm rank}(ker(\phi_{i+1})) = (t-s_{1}-\cdots -s_{i})- {\rm rank}
A_{i+1}$$ Equivalently, ${\rm rank} A_{i+1} =(t-s_{1}-\cdots-s_{i})-
{\rm rank} ((\lambda^i S_{K})[\lambda] \cap \lambda^{i+1}S_{K})$.\\
Since $(\lambda^i S_{K})[\lambda]$ is an elementary abelian
$\ell$-group of {\rm rank} $(t-s_{1}-\cdots -s_{i})$ and
$$ ((\lambda^i S_{K})[\lambda] + \lambda^{i+1}S_{K})/\lambda^{i+1}S_{K} \cong
(\lambda^i S_K)[\lambda]/(\lambda^i S_{K})[\lambda] \cap
\lambda^{i+1}S_{K} \cong H_{i+1},$$ we obtain
$$s_{i+1} = {\rm rank} H_{i+1} = (t-s_{1}-\cdots
-s_{i}) - {\rm rank}((\lambda^i S_{K})[\lambda] \cap
\lambda^{i+1}S_{K}) = {\rm rank} A_{i+1}.$$
\end{proof}
\vskip 5mm

\section{$F = \Q(\zeta_{5})$ and $K = F(x^{1/5})$}

\noindent In the last section, we showed that the {\rm rank} of
$S_K$ can be expressed in terms of the ranks of certain matrices
over $\mathbb{F}_{\ell}$. The explicit determination of these
matrices seems very difficult in general. Gerth had carried this out
in the case of $\ell=3$. In this section, we look at the case of
$\ell=5$. We assume in this section that $F$ is the cyclotomic field
$\Q(\zeta)$ generated by the $5$-th roots of unity; note that $F$
has class number $1$. We analyze what the earlier theorems give for
several examples and compare them with computations obtained by the
program SAGE (the latter uses GRH) - a detailed table is given at
the end of the paper. After that, we exploit the theorems of the
previous section to prove a number of general results which were
guessed at by a close inspection of the tables. \vskip 5mm

\noindent Consider any cyclic extension $K=F(x^{\frac{1}{5}})$ of
degree $5$ over $F$. We may assume that $x$ is an integer in $F$
which is not
divisible by the $5^{th}$ power of any prime element of $F$.\\
The ring of integer $\Z[\zeta]$ of $F$ is a principal ideal domain.
Consider those nonzero elements $x$ which can be written as
$$x = u \lambda^{e_{\lambda}} \pi_{1}^{e_{1}} \cdots \pi_{g}^{e_{g}},$$
where $u$ is a unit in $\Z[\zeta]$, $\lambda = 1-\zeta$ is the
unique prime over $5$ (so, $\lambda^{4} || 5$), and
$\pi_{1},\cdots,\pi_{g}$ are prime elements in $F$, where $e_i \in
\{1,\cdots,4\}$ for $1\leq i \leq g$, and $e_{\lambda} \in
\{0,1,\cdots,4\}$. \vskip 3mm

\subsection{Unwinding Hasse's formula for
$\mathbb{Q}(\zeta,x^{1/5})$} \vskip 3mm

\noindent Let us see how the {\rm rank} $t$ of the group of
ambiguous ideal classes in the $5$-class group $S_{K}$ is computed
using Hasse's famous formula (~\cite{Hasse}) in our case:
$$ t = d+q^{\ast}-(r+1+o).$$
For our fields $F$ and $K$, we have
$$ r=\frac{\ell-3}{2}=1,$$
$$o=1,$$
\begin{equation*}
d =
\begin{cases}
g & \text{ if } (\lambda)\text{ does not ramify in }K/F,\\
g+1 & \text{ if } (\lambda)\text{ ramifies in }K/F,
\end{cases}
\end{equation*}

\begin{equation*}
q^{\ast}   \leq \frac{\ell-1}{2}=2, \text{ (since the order of
}[E_{F}:E_{F}^{\ell}] = \ell^{\frac{\ell-1}{2}}=5^2)
\end{equation*}
\vskip 5mm

\noindent In $F$, the group of units is generated by $\zeta$ and
$1+\zeta$ where $\zeta$ is a primitive $5$-th root of unity. We see
from the definition of $q^{\ast}$ that
\begin{equation*}
q^{\ast} =
\begin{cases}
2 \text{ if } \zeta, 1+\zeta \in N_{K/F}(K^{\ast}),\\
1  \text{ if some, but not all } \zeta^{i}(1+\zeta)^{j} \in N_{K/F}(K^{\ast}),\\
0 \text{ if } \zeta^{i}(1+\zeta)^{j} \notin N_{K/F}(K^{\ast}), \text{ for } 0 \leq i,j \leq 4, i+j \neq 0.
\end{cases}
\end{equation*}

\noindent We have $t= d-3+q^{\ast}$ with $d$, $q^{\ast}$ determined
as above. \vskip 5mm

\noindent Since $q^{\ast}$ depends on whether
$\zeta^{i}(1+\zeta)^{j}$ is a norm from $K$ or not, its value can be
determined in terms of the local Hilbert symbols in completions of
$F$ as in the following lemma. \vskip 3mm

\begin{lem}
Let $F = \mathbb{Q}(\zeta)$ and let $K=F(x^{1/5})$ where $x = u
\lambda^{e_{\lambda}} \pi_{1}^{e_{1}} \cdots \pi_{g}^{e_{g}},$ with
$u$ a unit in $\Z[\zeta]$, $\lambda = 1-\zeta$ is the unique prime
over $\ell$ (so, $\lambda^{4} || 5$), and $\pi_{1},\cdots,\pi_{g}$
prime elements in $\Z[\zeta]$. Then\\
(a) $\zeta \in N_{K/F}(K^{\ast}) \iff N_{F/\Q}((\pi_{i})) \equiv
1\pmod{25}$ for all $i$;\\
(b) if $\zeta^{i}(1+\zeta)^{j} \in N_{K/F}(K^{\ast})$, if and only
if, every $\pi_k|x$ above has the property that $\zeta^i(1+\zeta)^j$
is a $5$-th power modulo $(\pi_k)$ in $\mathbb{Z}[\zeta]$ for all
$i,j$;\\
(c) $(\lambda)$ ramifies in $K/F \iff x \not\equiv \pm 1, \pm
7\pmod{ \lambda^{5}}$.
\end{lem}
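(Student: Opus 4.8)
This lemma computes when certain norm conditions hold, reducing global questions to local Hilbert symbol conditions. The three parts have a common backbone: by the Hasse norm theorem, a local product formula, and the compatibility of the global norm with local norms, an element $\alpha \in F^\ast$ is a global norm from $K$ if and only if it is a local norm at every place. Since $K/F$ is cyclic, Hasse's norm theorem applies. I need to organize the proof around identifying which places can possibly be ramified/relevant, and then translating "local norm" into a Hilbert-symbol or power-residue condition.

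**The plan.**

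The plan is to use the Hasse norm theorem for the cyclic extension $K/F$: an element $\alpha \in F^\ast$ lies in $N_{K/F}(K^\ast)$ if and only if $\alpha$ is a local norm at every place $v$ of $F$, i.e. $\left(\frac{\alpha, x}{v}\right) = 1$ for all $v$, where I use the degree-$5$ Hilbert symbol and the fact that $K = F(x^{1/5})$ is locally norm-trivial at $v$ exactly when the Hilbert symbol vanishes. The only places where the symbol can be nontrivial are the ramified ones, namely the primes $(\pi_i)$ dividing $x$, the prime $(\lambda)$ over $5$, and (for an element like $\zeta$ or $1+\zeta$) possibly $(\lambda)$; the archimedean places contribute nothing since $F$ is totally complex and $5$ is odd. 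So for each part I would reduce the global norm condition to a finite set of local symbol conditions and then evaluate those symbols explicitly.

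For part (a), first I would note that $\zeta$ is a unit, so the only primes where $\left(\frac{\zeta, x}{v}\right)$ can be nontrivial are those dividing $x$ together with $(\lambda)$; by the product formula the condition at $(\lambda)$ is forced once the others are known, so it suffices to control the symbols at the $(\pi_i)$. At a prime $(\pi_i)$ not dividing $5$, the Hilbert symbol $\left(\frac{\zeta,x}{(\pi_i)}\right)$ is a tame symbol and equals the power-residue symbol expressing whether $\zeta$ is a $5$-th power residue mod $(\pi_i)$ to the appropriate power of $e_i$; this $5$-th power residue condition on the unit $\zeta$ translates, by the structure of $(\mathbb{Z}[\zeta]/(\pi_i))^\ast$, into the congruence $N_{F/\mathbb{Q}}((\pi_i)) \equiv 1 \pmod{25}$. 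For part (b) I would run the same tame-symbol computation for $\alpha = \zeta^i(1+\zeta)^j$, where now the symbol at $(\pi_k)$ vanishes precisely when $\alpha$ is a $5$-th power mod $(\pi_k)$, giving the stated power-residue criterion. For part (c), the ramification of $(\lambda)$ in $K = F(x^{1/5})$ is governed by whether $x$ is congruent to a $5$-th power in the completion $F_\lambda$; since the $5$-th powers in the relevant unit group are exactly the classes $\pm 1, \pm 7 \pmod{\lambda^5}$ (this uses $\lambda^4 \,\|\, 5$ and a direct analysis of $(\mathbb{Z}[\zeta]/\lambda^5)^\ast$ modulo $5$-th powers), $(\lambda)$ is unramified iff $x \equiv \pm 1, \pm 7 \pmod{\lambda^5}$.

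**The main obstacle.**

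The hard part will be the explicit local computations at $(\lambda)$, the wild prime above $5$, where the Hilbert symbol is not tame and the naive tame-symbol formula fails. For part (c) in particular I expect the real work to be the determination of $(\mathbb{Z}[\zeta]/\lambda^5)^\ast$ modulo $5$-th powers and identifying the $5$-th-power classes as exactly $\{\pm 1, \pm 7\}$; this requires care with the filtration by powers of $\lambda$ and the exact value $v_\lambda(5) = 4$, and it is where the specific modulus $\lambda^5$ and the residues $\pm 1, \pm 7$ come from. For parts (a) and (b) the tame primes are routine once the power-residue symbol is identified, so the genuine content is the wild place, together with verifying via the Hilbert product formula that the symbol at $(\lambda)$ is the right dependent quantity rather than an independent condition.
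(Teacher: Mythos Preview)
Your proposal is correct and follows essentially the same route as the paper: Hasse's norm theorem reduces the global norm condition to local Hilbert symbols, the symbols at unramified primes vanish because $\zeta^i(1+\zeta)^j$ is a unit, the tame symbols at the $(\pi_k)$ are identified with the power-residue condition (the paper phrases this as ``$\pi_k$ splits completely in $F((\zeta^i(1+\zeta)^j)^{1/5})/F$'' and cites Hecke, Theorem~118), and the product formula disposes of the dependent condition at $(\lambda)$. The one difference worth noting is that the paper does not carry out the wild-prime analysis you flag as the main obstacle: for part~(c) it simply invokes Hecke, Theorem~119, rather than computing $(\mathbb{Z}[\zeta]/\lambda^5)^\ast$ modulo $5$-th powers directly, so what you anticipate as the hardest step is in the paper handled by citation.
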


\begin{proof}
Now $\zeta^{i}(1+\zeta)^{j} \in N_{K/F}(K^{\ast}) \iff \big(
\frac{x,\zeta^{i}(1+\zeta)^{j}}{\mathfrak{p}} \big) =1$ for all
prime ideals
$\mathfrak{p}$ of $F$. \\
Since $\zeta^{i}(1+\zeta)^{j}$ is a unit, $\big( \frac{x,\zeta^{i}(1+\zeta)^{j}}{\mathfrak{p}} \big)
=1$ if $\mathfrak{p}$ does not ramify in $K/F$. We will now look at the prime ideals $(\lambda)$
and the $(\pi_k)$'s.\\
Firstly, look at $\mathfrak{p}=(\pi_{k}),$ where $\pi_{k}|x$. Then
$$\bigg( \frac{x,\zeta^{i}(1+\zeta)^{j}}{(\pi_{k})} \bigg) =
\bigg( \frac{u \lambda^{e_{\lambda}} \pi_{1}^{e_{1}} \cdots
\pi_{g}^{e_{g}},\zeta^{i}(1+\zeta)^{j}}{(\pi_{k})} \bigg) $$
$$=  \bigg( \frac{u \lambda^{e_{\lambda}},\zeta^{i}(1+\zeta)^{j}}{(\pi_{k})} \bigg)
\bigg( \frac{ \pi_{1}^{e_{1}},\zeta^{i}(1+\zeta)^{j}}{(\pi_{k})} \bigg) \cdots
\bigg( \frac{ \pi_{g}^{e_{g}},\zeta^{i}(1+\zeta)^{j}}{(\pi_{k})} \bigg) $$
$$= \bigg( \frac{ \pi_{k},\zeta^{i}(1+\zeta)^{j}}{(\pi_{k})} \bigg)^{e_{k}}=1$$
$$\iff \bigg( \frac{\pi_{k},\zeta^{i}(1+\zeta)^{j}}{(\pi_{k})}
\bigg) =1 \iff \bigg(
\frac{\zeta^{i}(1+\zeta)^{j},\pi_{k}}{(\pi_{k})} \bigg) =1.$$ The
last equality is equivalent to the conditions
$$ \pi_{k} \text{ splits completely in } F((\zeta^{i}(1+\zeta)^{j})^{\frac{1}{5}})/F.$$
Since the last condition holds if and only if
$\zeta^{i}(1+\zeta)^{j} \equiv a^{5}\pmod{ (\pi_{k})} \text{ for some
} a \in \Z[\zeta]$ (\cite{Hecke}[Theorem 118]), the necessity
assertion in (b) for $\zeta^i(1+\zeta)^j$ to be a norm follows.\\
The converse assertion follows by the product law since $ \bigg(
\frac{x,\zeta^{i}(1+\zeta)^{j}}{(\pi_{k})} \bigg)=1$ for all
$\pi_{k}|x$ implies $ \bigg(
\frac{x,\zeta^{i}(1+\zeta)^{j}}{(\lambda)} \bigg)=1$ and
hence, $\zeta^{i}(1+\zeta)^{j} \in N_{K/F}(K^{\ast}) $.\\
To deduce (a) from (b), we note that, in particular, $\zeta \in
N_{K/F}(K^{\ast})$ if and only if, all $\pi_{i}$ splits completely
in $F(\zeta^{\frac{1}{5}})/F$,
which is equivalent to the condition, $N_{F/\Q}((\pi_{i})) \equiv 1\pmod{25}$, for all $i$. \\
Finally (c) follows from (\cite{Hecke}[Theorem 119]) which shows
that $(\lambda)$ ramifies in $K/F$ iff $x \not\equiv \pm 1, \pm
7\pmod{ \lambda^{5}}$.
\end{proof}
\vskip 5mm

\noindent From the above lemma, we may immediately formulate the
following proposition. \vskip 3mm

\begin{prop}
Let $F=\mathbb{Q}(\zeta)$ and $K=F(x^{\frac{1}{5}})$ of degree $5$
as above. Write $x= u \lambda^{e_{\lambda}} \pi_{1}^{e_{1}} \cdots
\pi_{g}^{e_{g}}$, $\lambda = 1-\zeta$, each $\pi_{i}\in F$ is a
prime element, $e_{i} \in \{1,2,3,4\}$ for $1\leq i \leq g$,
$e_{\lambda} \in \{0,1,2,3,4\}$ as before. Let $d$ denote the number
of primes that ramify in $K/F$. Then the {\rm rank} $t$ of the group
of ambiguous ideal classes in $S_{K}$ is given by:\\
$t = d-1, d-2$ or $d-3$ respectively as to the following three
situations I,II,III:\\
I : each $\pi_k|x$ has the property that both $\zeta \text{ and }
1+\zeta$ are $5$-th powers modulo $(\pi_{k})$ in
$\mathbb{Z}[\zeta]$; \\
II : each $\pi_k|x$ has the property that some, but not all
$\zeta^{i}(1+\zeta)^{j}$ is a $5$-th power modulo $(\pi_{k})$ in
$\mathbb{Z}[\zeta]$; \\
III: some $\pi_k|x$ has the property that none of the
$\zeta^{i}(1+\zeta)^{j}$ is a $5$-th power modulo $(\pi_{k})$ in
$\mathbb{Z}[\zeta]$.\\
These are further simplified to the expressions $t = g,g-1,g-2$ or
$g-3$ according as to the respective conditions A,B,C,D:\\
A : \\
$x \not\equiv\pm 1, \pm 7$ modulo  $(\lambda^{5})$ and each
$\pi_k|x$ has the property that both $\zeta$ and $1+\zeta$ are
$5$-th powers modulo $(\pi_{k}))$ in $\mathbb{Z}[\zeta]$; \\
B : \\
$x \equiv\pm 1, \pm 7$ modulo  $(\lambda^{5})$ and each $\pi_k|x$
has the property that both $\zeta$ and $1+\zeta$ are $5$-th powers
modulo $(\pi_{k}))$ in $\mathbb{Z}[\zeta]$;\\
OR\\
$x \not\equiv\pm 1, \pm 7$ modulo  $(\lambda^{5})$ and each
$\pi_k|x$ has the property that some, but not all
$\zeta^{i}(1+\zeta)^{j}$ is a $5$-th power modulo $(\pi_{k})$ in
$\mathbb{Z}[\zeta]$;\\
C : \\
$x \not\equiv\pm 1, \pm 7$ modulo  $(\lambda^{5})$ and some
$\pi_k|x$ has the property that none of the $\zeta^{i}(1+\zeta)^{j}$
is a $5$-th power modulo $(\pi_{k})$ in $\mathbb{Z}[\zeta]$;\\
OR\\
$x \equiv\pm 1, \pm 7$ modulo  $(\lambda^{5})$ and each $\pi_k|x$
has the property that some, but not all $\zeta^{i}(1+\zeta)^{j}$ is
a $5$-th power modulo $(\pi_{k})$ in $\mathbb{Z}[\zeta]$;\\
D :\\
$x \equiv\pm 1, \pm 7$ modulo  $(\lambda^{5})$ and some $\pi_k|x$
has the property that none of the $\zeta^{i}(1+\zeta)^{j}$ is a
$5$-th power modulo $(\pi_{k})$ in $\mathbb{Z}[\zeta]$.
\end{prop}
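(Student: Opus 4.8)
\noindent The plan is to deduce everything from Hasse's formula $t = d + q^{\ast} - (r+1+o)$, which for $F = \mathbb{Q}(\zeta)$ collapses to $t = d + q^{\ast} - 3$ once we substitute the values $r = 1$ and $o = 1$ computed above. Since $q^{\ast} \le (\ell-1)/2 = 2$, the integer $q^{\ast}$ lies in $\{2,1,0\}$, and these three values give $t = d-1,\, d-2,\, d-3$ respectively. Thus the first assertion reduces entirely to showing that the situations I, II, III are precisely the situations $q^{\ast} = 2, 1, 0$, after which the passage to A, B, C, D is the combination of this trichotomy with the determination of $d$.

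\noindent To identify $q^{\ast}$, I would first record that $N_{K/F}(\alpha) = \alpha^{5}$ for every $\alpha \in F$, so that $N_{K/F}(E_{F}) = E_{F}^{5}$; consequently $q^{\ast}$ equals the $\mathbb{F}_{5}$-dimension of the image of the norm units $N_{K/F}(K^{\ast}) \cap E_{F}$ inside the two-dimensional space $E_{F}/E_{F}^{5}$, which carries the classes of $\zeta$ and $1+\zeta$ as a basis. Now I would feed each of the three norm conditions from the displayed case analysis for $q^{\ast}$ into part (b) of the preceding lemma, which rewrites ``$\zeta^{i}(1+\zeta)^{j} \in N_{K/F}(K^{\ast})$'' as ``$\zeta^{i}(1+\zeta)^{j}$ is a fifth power modulo $(\pi_{k})$ for every $\pi_{k} \mid x$''. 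This is exactly the dictionary converting the norm trichotomy into the fifth-power trichotomy I, II, III; part (a) is merely the instance $\zeta^{i}(1+\zeta)^{j} = \zeta$, recast through $N_{F/\mathbb{Q}}((\pi_{i})) \equiv 1 \pmod{25}$.

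\noindent Next I would read off $d$ from part (c) of the lemma: the primes $(\pi_{1}), \dots, (\pi_{g})$ always ramify in $K/F$, whereas $(\lambda)$ ramifies if and only if $x \not\equiv \pm 1, \pm 7 \pmod{\lambda^{5}}$. Hence $d = g+1$ in the latter case and $d = g$ when $x \equiv \pm 1, \pm 7 \pmod{\lambda^{5}}$. Substituting the two possible values of $d$ and the three possible values of $q^{\ast}$ into $t = d + q^{\ast} - 3$ produces the numbers $g, g-1, g-2, g-3$; grouping together those pairs $(d, q^{\ast})$ that yield a common $t$ gives precisely the four clauses A, B, C, D. This final grouping is routine bookkeeping.

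\noindent The step I expect to demand the most care is the precise matching of I, II, III to $q^{\ast} = 2, 1, 0$. The case $q^{\ast} = 2 \Leftrightarrow \text{I}$ is clean: since $\zeta$ and $1+\zeta$ generate $E_{F}/E_{F}^{5}$, their both being fifth powers modulo every $(\pi_{k})$ is equivalent to the vanishing of all the reduction characters $\chi_{k} \colon E_{F}/E_{F}^{5} \to \mathbb{F}_{\pi_{k}}^{\ast}/(\mathbb{F}_{\pi_{k}}^{\ast})^{5}$. For the remaining two cases I find it cleanest to observe that $q^{\ast} = \dim \bigcap_{k} \ker \chi_{k}$ and then to verify that the per-prime phrasings in II and III really do force $\dim \bigcap_{k}\ker\chi_{k}$ to equal $1$ and $0$; this is immediate when $x$ carries a single prime factor, where $\bigcap_{k}\ker\chi_{k} = \ker\chi_{1}$, and in general is the point at which the quantifiers over the $\pi_{k}$ must be tracked carefully. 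This character reformulation is, moreover, the same device underlying the matrix computation of the $s_{i}$ in Section 4, so isolating it explicitly here is worthwhile.
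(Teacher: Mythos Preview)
Your approach is exactly the paper's: the proposition is presented there without a separate proof, as an immediate reformulation of Hasse's formula $t = d + q^{\ast} - 3$ through parts (b) and (c) of the preceding lemma, and your write-up simply makes that derivation explicit.

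One remark on the point you rightly flag as delicate. Each residue character $\chi_{k}\colon E_{F}/E_{F}^{5}\cong\mathbb{F}_{5}^{2}\to (\mathbb{Z}[\zeta]/(\pi_{k}))^{\ast}/((\mathbb{Z}[\zeta]/(\pi_{k}))^{\ast})^{5}$ lands in a cyclic group of order dividing $5$, so every $\ker\chi_{k}$ has $\mathbb{F}_{5}$-dimension at least $1$. Thus the literal per-prime reading of III (some $\ker\chi_{k}=0$) is vacuous, and II as worded (each $\ker\chi_{k}$ proper and nonzero) says only ``not I''. The conditions I, II, III should therefore be read as shorthand for $q^{\ast}=2,1,0$ transported through the lemma, i.e.\ as conditions on $\dim\bigcap_{k}\ker\chi_{k}$ rather than on the individual $\ker\chi_{k}$; compare Example~5.5, where $q^{\ast}=0$ for $x=11$ even though each $\ker\chi_{k}$ is one-dimensional. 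With that reading your argument goes through verbatim---the imprecision lies in the proposition's phrasing, not in your plan.
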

\vskip 5mm

\subsection{Examples}
\vskip 3mm

\begin{example}
We consider $K= \Q(\zeta,7^{\frac{1}{5}})$.  We observe that $\zeta
\in N_{K/F}(K^{\ast})$, and $(1+\zeta) \equiv (2+4 \zeta^{3})^{5}
\pmod{7}$. Hence $q^{\ast} =2$, and $t=g-1=0$. Thus, $S_{K}$ must be
trivial by our result. This is confirmed by a SAGE program (which is
known to be valid under GRH) - see table 1 compiled in section 5.5.
\end{example}
\vskip 5mm

\begin{example}
Let $K= \Q(\zeta,18^{\frac{1}{5}})$. Notice that in this case $\zeta
\notin N_{K/F}(K^{\ast})$. We observe that,
$$ \zeta^{2}(1+\zeta) \equiv (1+\zeta^{2})^{5} \pmod{2},$$
$$ \zeta^{2}(1+\zeta) \equiv (-1-\zeta)^{5} \pmod{3}.$$
Hence $q^{\ast}=1$ and $t=g-2=0$. Thus, $S_{K}=\{1\}$ from our
result. This is again confirmed by a SAGE program as seen from the
table in 5.5.
\end{example}
\vskip 5mm

\begin{example}
Let $K= \Q(\zeta,11^{\frac{1}{5}})$. Note that in $F=\Q(\zeta_{5})$,
$11= \pi_{1}\pi_{2}\pi_{3}\pi_{4}$, with
$\pi_{1}=(\zeta^3+2\zeta^2+\zeta+2) = (\zeta+2)$,
$\pi_{2}=(-\zeta^2+\zeta+1)$, $\pi_{3}=(\zeta^3-\zeta+1)$,
$\pi_{4}=(-2\zeta^3-\zeta^2-\zeta)=(2\zeta^2+\zeta+1)$. Thus in this
case, $g=4$.\\
Next, we note that $\zeta \equiv -2\pmod {(\zeta+2)}$ is not a
$5^{th}$ power in $\Z[\zeta]$, because if it is a $5^{th}$ power
modulo $(\zeta+2)$, then $11| (n^{5}+2)$, for some integer $n \in
\Z$,
which is not true.\\
We also notice that $\zeta \not\equiv x^{5} \pmod{
(-\zeta^2+\zeta+1)}$, because if so, then we have modulo
$-\zeta^2+\zeta+1$,
$$ \zeta \equiv (a+b\zeta)^{5} \equiv (a^5+b^5+10a^3b^2+10a^2b^3+10ab^4)+
5ab(a^3+2a^2b+4ab^2+3b^3)\zeta.$$ Next we note that, for $1\leq
i\leq 4$, we have,
$$ (1+\zeta)^{i} \equiv \zeta^{2i} \pmod{ (-\zeta^2+\zeta+1)}.$$
Hence $(1+\zeta)^{i}$ is not a $5^{th}$ power modulo $(-\zeta^2+\zeta+1)$.\\
Next for $1 \leq i \leq 4$, $ 0 \leq j \leq 4$, we see that,
$$\zeta^{i}(1+\zeta)^{j} \equiv (-1)^{j}\zeta^{i} \pmod{ (\zeta+2)}.$$
Hence, $\zeta^{i}(1+\zeta)^{j}$ is not a $5^{th}$ power modulo $(\zeta+2)$. \\
So in this case we have $q^{\ast}=0$ and $t=g-2=2$. So {\rm rank}
$S_{K} \geq 2$ by our result. This is confirmed by SAGE - see table
1.
\end{example}
\vskip 5mm

\begin{example}
Let $K= \Q(\zeta,19^{\frac{1}{5}})$. Note that in $F=\Q(\zeta_{5})$,
$19= \pi_{1}\pi_{2}$, with
$\pi_{1} = (3+4\zeta^{2}+4\zeta^{3})$,
$\pi_{2}= (-1-4\zeta^{2} -4 \zeta^{3})$.
Notice that in this case, $\zeta \notin N_{K/F}(K^{\ast})$. We observe that,
$$ -\zeta^{2}(1+\zeta) \equiv 3^5 \pmod{ \pi_{1}},$$
$$ -\zeta^{2}(1+\zeta) \equiv 6^5 \pmod{ \pi_{2}}. $$
Hence $q^{\ast}=1$ and $t=g-1=1$. So $1 \leq \text{{\rm rank} }S_{K}
\leq 4$ by our result. This is confirmed by SAGE - see table 1.
\end{example}
\vskip 5mm

\begin{example}
Let $K= \Q(\zeta,42^{\frac{1}{5}})$. Notice that in this case $\zeta
\notin N_{K/F}(K^{\ast})$. From examples 5.2 and 5.3, we see that
$\zeta^2(1+\zeta) \in N_{K/F}(K^{\ast})$.\\
Hence $q^{\ast}=1$ and $t=g-1=2$. Thus, $2 \leq \text{{\rm rank}}
S_{K} \leq 8$ from our result. This is again confirmed by a SAGE
program as seen from table 1.
\end{example}
\vskip 5mm

\subsection{Constructing genus fields}
\vskip 3mm

\noindent We want to find elements $x_{1},\cdots,x_{t}\in K$ such
that the genus field
$M_1=K(x_{1}^{\frac{1}{5}},\cdots,x_{t}^{\frac{1}{5}})$.\\
{\it In the following proposition, we restrict our attention only to
those elements $x$ for which each $\pi$ that divides $x$ is of the
form, $\pi \equiv a \pmod{ 5\Z[\zeta]}$ for some $a \in \{1,2,3,4\}$.}
\vskip 5mm

\begin{prop}\label{generators}
Let $F=\Q(\zeta)$, and let $K=F(x^{\frac{1}{5}})$ be cyclic of
degree $5$ as above. Writing
$$x=u \lambda^{e_{\lambda}}  \pi_{1}^{e_{1}} \cdots \pi_{f}^{e_{f}}
\pi_{f+1}^{e_{f+1}} \cdots \pi_{g}^{e_{g}},$$ where each $\pi_{i}
\equiv \pm 1, \pm 7 \pmod{ \lambda^{5}}$, for $1\leq i \leq f$ and  $\pi_{j} \not\equiv
\pm 1, \pm 7 \pmod{ \lambda^{5}}$,
for $f+1 \leq j \leq g$.\\ Then, we have:\\
(i) there exist $h_{i} \in \{1,2,3,4\}$ such that
$\pi_{f+1}\pi_{i}^{h_{i}} \equiv \pm1, \pm7\pmod{ \lambda^{5}}$, for $f+2 \leq i \leq
g$;\\
(ii) if $(\lambda)$ ramifies in $K/F$ and each $\pi_k|x$ has the
property that some, but not all $\zeta^{i}(1+\zeta)^{j}$ are $5$-th
powers modulo $(\pi_{k}))$ in $\mathbb{Z}[\zeta]$, then the genus
field $M_{1}$ is given as
\begin{equation}\label{genusfield}
M_{1}=K(\pi_{1}^{\frac{1}{5}},\cdots,\pi_{f}^{\frac{1}{5}},
(\pi_{f+1}\pi_{f+2}^{h_{f+2}})^{\frac{1}{5}},\cdots,
(\pi_{f+1}\pi_{g}^{h_{g}})^{\frac{1}{5}})
\end{equation}
where $h_{i} \in \{1,2,3,4\}$ is chosen as in (i);\\
(iii) in the other cases, the the genus field $M_{1}$ is given
similarly by deleting an appropriate number of $5^{th}$ roots from
the right-hand side of the equation (\ref{genusfield}).
\end{prop}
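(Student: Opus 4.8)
The plan is to handle part (i) by a linear-algebra computation in the top graded layer of the local units at $\lambda$, and to handle parts (ii)--(iii) by exhibiting exactly $t$ explicit everywhere-unramified Kummer generators and matching their number against Hasse's formula as unwound in Proposition 5.2. For part (i), I would work with the filtration quotient $U^{(4)}/U^{(5)}$, where $U^{(k)}=1+\lambda^{k}\mathbb{Z}[\zeta]$, which is canonically isomorphic to the additive group $\mathbb{Z}[\zeta]/\lambda\cong\mathbb{F}_{5}$. First I record that the four classes $\pm1,\pm7$ form a subgroup $P$ of $(\mathbb{Z}[\zeta]/\lambda^{5})^{\times}$ of order $4$ (indeed $7^{2}\equiv-1\pmod{\lambda^{5}}$ because $50$ has $\lambda$-valuation $8\geq5$), and that reduction modulo $(\lambda^{4})=(5)$ maps $P$ bijectively onto $(\mathbb{Z}/5)^{\times}$. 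Hence every element congruent to a rational integer $a\not\equiv0\pmod 5$ has a unique ``good representative'' $g_{a}\in P$ with $g_{a}\equiv a\pmod{(5)}$, and I define $\theta(\pi)$ to be the class of $\pi/g_{a}$ in $U^{(4)}/U^{(5)}\cong\mathbb{F}_{5}$. The two points to check are that $\theta$ is a homomorphism (which follows because $P\to(\mathbb{Z}/5)^{\times}$ is an isomorphism, so $g_{a}g_{a'}=g_{aa'}$) and that $\pi\equiv\pm1,\pm7\pmod{\lambda^{5}}$ precisely when $\theta(\pi)=0$. Since $\pi_{f+1}$ and each $\pi_{i}$ with $i\geq f+2$ are by hypothesis not congruent to $\pm1,\pm7$, both $\theta(\pi_{f+1})$ and $\theta(\pi_{i})$ lie in $\mathbb{F}_{5}^{\times}$, so the equation $\theta(\pi_{f+1})+h\,\theta(\pi_{i})=0$ has the unique solution $h_{i}=-\theta(\pi_{f+1})\theta(\pi_{i})^{-1}\in\mathbb{F}_{5}^{\times}=\{1,2,3,4\}$, which is exactly the claim (i).

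For part (ii), recall that $M_{1}$ is the unique subfield of the genus field with $\mathrm{Gal}(M_{1}/K)\cong S_{K}/\lambda S_{K}\cong(\mathbb{Z}/5)^{t}$, and that here $t=g-1$ by Proposition 5.2 (situation II with $(\lambda)$ ramified, so $d=g+1$). By Kummer theory $M_{1}$ is generated over $K$ by the fifth roots of those classes $y\in F^{\times}/(F^{\times})^{5}$ for which $K(y^{1/5})/K$ is unramified everywhere, taken modulo $\langle x\rangle$. I would first verify that each proposed generator $y$ --- the $\pi_{k}$ for $k\leq f$ and the products $\pi_{f+1}\pi_{i}^{h_{i}}$ for $i\geq f+2$ --- yields an unramified $K(y^{1/5})/K$, checking three kinds of places. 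At a prime $\mathfrak{P}\mid\pi_{j}$ with $\pi_{j}\mid x$, the extension $K/F$ is totally and tamely ramified, so $v_{K}(\pi_{j})=5$ and $v_{K}(y)\equiv0\pmod5$; thus $y$ is a unit times a fifth power locally, and since the residue field contains $\zeta_{5}$ (so $5\mid N\pi_{j}-1$), adjoining the fifth root of a unit gives an unramified extension. At a finite prime away from $x$ and $\lambda$, $y$ is a unit and $K/F$ is unramified, so $K(y^{1/5})/K$ is again unramified. At $\lambda$, every generator satisfies $y\equiv\pm1,\pm7\pmod{\lambda^{5}}$ (for $\pi_{k}$, $k\leq f$, by hypothesis; for $\pi_{f+1}\pi_{i}^{h_{i}}$ by part (i)), so Lemma 5.1(c) shows $F(y^{1/5})/F$, hence $K(y^{1/5})/K$, is unramified at $\lambda$; there are no real places since $F$ is totally complex. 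Thus all $g-1$ generators produce subfields of $M_{1}$.

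It then remains to show that these $g-1$ generators are independent in $K^{\times}/(K^{\times})^{5}$, equivalently that $y_{1},\dots,y_{g-1},x$ are independent in $F^{\times}/(F^{\times})^{5}$, for then the compositum has degree $5^{g-1}=5^{t}$ over $K$ and must equal $M_{1}$. I would read off the valuations at $\pi_{1},\dots,\pi_{g},\lambda$ to obtain a Rédei-type matrix over $\mathbb{F}_{5}$: the $\pi_{k}$ give standard basis vectors $\epsilon_{k}$, the products $\pi_{f+1}\pi_{i}^{h_{i}}$ give $\epsilon_{f+1}+h_{i}\epsilon_{i}$, and these $g-1$ vectors are visibly independent, spanning the hyperplane $\{v_{f+1}=\sum_{i\geq f+2}h_{i}^{-1}v_{i}\}$ in the $\pi$-coordinates, while $x$ has valuation vector $(e_{1},\dots,e_{g},e_{\lambda})$. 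The independence of this system together with $x$ is exactly where the hypothesis that $(\lambda)$ ramifies is used: conceptually, $x$ is the unique combination of ramified primes whose $\lambda$-ramification is absorbed by $K$, and $\pi_{f+1}$ has been spent as the pivot encoding this single relation, which is the structural reason the count drops from $g$ to $g-1=t$. This last independence step --- reconciling the explicit valuation matrix with the absorbed $\lambda$-ramification, and, in the delicate subcase $e_{\lambda}=0$, with the unit $u$ and the exact residue of $x$ modulo $\lambda^{5}$ --- is the main obstacle; once it is settled, comparing $5^{g-1}$ with $[M_{1}:K]=5^{t}$ finishes (ii).

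Finally, for part (iii) the same construction applies with the number of surviving generators dictated by the value of $t$ from Proposition 5.2, which drops to $g-2$ or $g-3$ in situations C, D (and changes when $(\lambda)$ is unramified). Each decrease of $t$ by one corresponds to one further relation among the candidate generators: either no $\lambda$-pivot is needed, or the stronger fifth-power-residue condition at some $\pi_{k}$ forces one fifth root to lie already in the field generated by the others. One therefore deletes the corresponding number of fifth roots from the right-hand side of \eqref{genusfield} to obtain a basis, the unramifiedness of each retained generator being checked by the identical three-place analysis and the independence by the same Rédei-matrix computation with the appropriate rows removed.
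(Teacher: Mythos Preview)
Your approach is sound, and for part~(i) it is considerably more explicit than the paper's bare ``straightforward''. Your three-place unramifiedness check in~(ii) is also correct and more detailed than the paper's one-line justification that each $(\pi_i)$ is the fifth power of an ideal in $K$ while the generators satisfy the right congruence modulo $\lambda^5$.

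The divergence --- and the place where your self-identified ``main obstacle'' sits --- is the independence step. You verify independence of $y_1,\dots,y_{g-1}$ via a valuation matrix over $\mathbb{F}_5$; this is fine and amounts to the paper's observation that each $\pi_i$ with $i\neq f+1$ ramifies in exactly one of the fields $F(y_j^{1/5})/F$. But you then try to bring $x$ into the same valuation framework and stall when $e_\lambda=0$, since the $\lambda$-coordinate no longer separates $x$ from the hyperplane spanned by the $y_i$, and you are forced to chase the unit $u$ and the residue of $x$ modulo $\lambda^5$. The paper sidesteps this entirely by arguing linear disjointness of the fields $F(\pi_1^{1/5}),\dots,F((\pi_{f+1}\pi_g^{h_g})^{1/5}),F(x^{1/5})$ via \emph{ramification} rather than valuations: the prime $(\lambda)$ ramifies in $F(x^{1/5})/F$ --- this is precisely the hypothesis of~(ii), and it holds regardless of whether $e_\lambda=0$ --- but in none of the $F(y_i^{1/5})/F$, since each $y_i\equiv\pm1,\pm7\pmod{\lambda^5}$ by construction. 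Hence $F(x^{1/5})$ cannot lie in the composite of the $F(y_i^{1/5})$, and the full family is linearly disjoint over $F$. This single observation closes your gap with no case split on $e_\lambda$ and no appeal to units. Your valuation matrix is not wrong, but it encodes ramification data in coordinates that then obscure the one fact ($\lambda$-ramification of $K/F$) you actually need to finish.
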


\begin{proof}
The proof of (i) is straightforward and we proceed to prove (ii).\\
Suppose first that $(\lambda)$ ramifies in $K/F$ and that for each
$\pi_k|x$, some (but not all) $\zeta^{i}(1+\zeta)^{j}$ are $5$-th
powers modulo $(\pi_{k}))$ in $\Z[\zeta]$. Let $M_{1}^{\prime}$
denote the field given on the right-hand side of equation
(\ref{genusfield}); we shall prove that $M_{1}^{\prime}$ is the
genus field $M_1$ of degree $5^t$ over $K$ which corresponds to
$S_K/\lambda S_K$. Note that, by the previous proposition, the
number of $5^{th}$ roots in this expression is $t$. Next, we note
that only $\pi_{i}$ ramifies in $F(\pi_{i}^{\frac{1}{5}})/F$ for
$1\leq i \leq f$, and that only the primes $\pi_{i}$ and $\pi_{f+1}$
ramify in $F((\pi_{f+1}\pi_{i}^{h_{i}})^{\frac{1}{5}})/F$ for $f+2
\leq i \leq g$. Hence, each of the fields
$$F(\pi_{1}^{\frac{1}{5}}),\cdots,F(\pi_{f}^{\frac{1}{5}}),
F((\pi_{f+1}\pi_{f+2}^{h_{f+2}})^{\frac{1}{5}}),
\cdots,F((\pi_{f+1}\pi_{g}^{h_{g}})^{\frac{1}{5}}),F(x^{\frac{1}{5}})$$
is linearly disjoint from the composite of the other fields. Thus,
$$ [F(\pi_{1}^{\frac{1}{5}},\cdots,\pi_{f}^{\frac{1}{5}},
(\pi_{f+1}\pi_{f+2}^{h_{f+2}})^{\frac{1}{5}},\cdots,(\pi_{f+1}
\pi_{g}^{h_{g}})^{\frac{1}{5}},x^{\frac{1}{5}}) : F ] =
5^{t+1}.$$ This implies $[M_{1}^{\prime}:K] =5^{t}$. As
$[M_{1}:K]= 5^{t}$, we will have $M_{1}^{\prime} = M_{1}$ if we
can show that $M_{1}^{\prime} \subset M_{1}$. Now, by definition,
$M_{1}$ is the maximal abelian extension of $F$ contained in the
Hilbert class field of $K$. Since $M_{1}^{\prime}$ is a composite of
linearly disjoint abelian extensions, it is an abelian extension of
$F$. Therefore, to show $M_{1}^{\prime} = M_{1}$, it suffices to
show that $M_{1}^{\prime}$ is unramified over $K$. But, this is true
because each $(\pi_{i})$ is $5^{th}$ power of an ideal in $K$,
$\pi_{i} \equiv \pm1,\pm7 \pmod{ \lambda^{5}}$ for $1\leq i \leq f$ and
$\pi_{f+1}\pi_{i}^{h_{i}} \equiv \pm1,\pm 7 \pmod{ \lambda^{5}}$ for $f+2 \leq i \leq g$.\\
Thus, we have proved (ii). \\
The remaining case (iii) is handled completely similarly; we just
need to delete an appropriate number of any $5^{th}$ roots from the
right-hand side of the equation (\ref{genusfield}).
\end{proof}
\vskip 5mm

\begin{cor}
Let $F,K,x$ be as in the proposition. Further, suppose the genus
field $M_{1}$ of $K/F$ is described as in the proposition. Then, for
$i=1,2$, the genus fields $M_{i+1}$ are obtained
recursively by deleting $s_{i}$ generators of the field $M_{i}$.
\end{cor}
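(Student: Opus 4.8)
The plan is to extract everything from the filtration of genus fields and the rank data already assembled in Section 4 and Proposition \ref{prop1}, and then to translate the inclusion $M_{i+1}\subset M_i$ into a statement about Kummer radicals. Recall that $\ell=5$, so $\ell-3=2$ and the fields $M_2,M_3$ are exactly the ones constructed in Section 4 (the recursion stops here because $\lambda^{\ell-1}S_K=\ell S_K$, which is why the corollary only asserts $i=1,2$). For each relevant $i$ we have, from Section 4, the isomorphisms
$${\rm Gal}(M_i/K)\cong \lambda^{i-1}S_K/\lambda^{i}S_K,\qquad {\rm Gal}(M_{i+1}/K)\cong \lambda^{i}S_K/\lambda^{i+1}S_K,$$
both elementary abelian, of ranks $t-s_1-\cdots-s_{i-1}$ and $t-s_1-\cdots-s_i$ respectively (the empty sum being $0$ when $i=1$, so that ${\rm Gal}(M_1/K)$ has rank $t$).

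First I would record that the number of Kummer generators of a multi-$5$-extension of $K$ equals the $\mathbb{F}_{5}$-dimension of its Galois group over $K$: since $K$ contains $\zeta$, the field $M_i$ has the form $K(\Delta_i^{1/5})$ for a subgroup $\Delta_i\subset K^{\ast}/(K^{\ast})^5$, and Kummer theory gives a perfect pairing ${\rm Gal}(M_i/K)\times\Delta_i\to\mu_5$, whence $\dim_{\mathbb{F}_{5}}\Delta_i=t-s_1-\cdots-s_{i-1}=:m_i$. Thus passing from $M_i$ to $M_{i+1}$ the number of generators drops by exactly
$$(t-s_1-\cdots-s_{i-1})-(t-s_1-\cdots-s_i)=s_i,$$
which is the count asserted; moreover $s_i={\rm rank}\,A_i$, computed in Theorem \ref{thm2} (the matrix $A_1$, for $i=1$) and Theorem \ref{thm3} (the matrix $A_2$, for $i=2$), so the number of deletions is the explicitly computable quantity.

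Next I would make the deletion literal. Under the Kummer correspondence the inclusion $M_{i+1}\subset M_i$ becomes an inclusion of radicals $\Delta_{i+1}\subset\Delta_i$ of $\mathbb{F}_{5}$-vector spaces, and by the dimension count above $\Delta_{i+1}$ has codimension $s_i$ in $\Delta_i$. Choosing an $\mathbb{F}_{5}$-basis $y_1,\dots,y_{m_i}$ of $\Delta_i$ whose first $m_i-s_i$ members span $\Delta_{i+1}$ (any basis of a subspace extends to a basis of the ambient space), we get $M_i=K(y_1^{1/5},\dots,y_{m_i}^{1/5})$ and $M_{i+1}=K(y_1^{1/5},\dots,y_{m_i-s_i}^{1/5})$; that is, $M_{i+1}$ is $M_i$ with $s_i$ of its generators erased. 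Applying this with $i=1$, starting from the explicit generators of $M_1$ furnished by Proposition \ref{generators}, and then with $i=2$, yields the recursion.

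The point to be careful about is not the final linear-algebra step but the compatibility of the two passes: the basis of $\Delta_1$ adapted to $\Delta_2$ need not consist of the specific generators written down in Proposition \ref{generators}, so one should either allow a change of Kummer generators of $M_i$ at each stage, or verify that the list produced at the previous stage can be re-ordered and replaced so that a sublist cuts out the next field. Once the identifications ${\rm Gal}(M_{i+1}/K)\cong\lambda^{i}S_K/\lambda^{i+1}S_K$ of Section 4 are granted, however, the codimension of $\Delta_{i+1}$ in $\Delta_i$ is forced to be $s_i$, and the deletion statement follows; this is also the reason the recursion terminates at $i=2$ for $\ell=5$.
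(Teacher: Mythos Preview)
Your argument is correct and follows essentially the same route as the paper: both use the degree drop $[M_i:K]/[M_{i+1}:K]=5^{s_i}$ together with Kummer theory to conclude that $M_{i+1}$ arises from $M_i$ by removing $s_i$ fifth-root generators. The paper compresses this into two lines by invoking the linear disjointness of the constituent fields of $M_1$ established in Proposition~\ref{generators}; you unpack the same content more carefully via the Kummer radicals $\Delta_{i+1}\subset\Delta_i$ and an adapted basis, and you are right to flag that the deleted generators need not be among the \emph{original} generators of Proposition~\ref{generators} without a change of basis --- a point the paper's terse proof glosses over.
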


\begin{proof}
We have $[M_{i}:K] = 5^{t-s_{1}-\cdots - s_{i-1}}$ and
$[M_{i+1}:K]= 5^{t-s_{1}-\cdots - s_{i}}$. Since each of the
fields $F(\pi_{1}^{\frac{1}{5}}),
\cdots,F(\pi_{f}^{\frac{1}{5}}),F((\pi_{f+1}\pi_{f+2}^{h_{f+2}})^{\frac{1}{5}}),
\cdots,F((\pi_{f+1}\pi_{g}^{h_{g}})^{\frac{1}{5}}),F(x^{\frac{1}{5}})$
is linearly disjoint from the composite of the other fields, the
result follows.
\end{proof}

\vskip 5mm

\noindent Now, we look for representative ideals
$\mathfrak{a}_{1},\cdots,\mathfrak{a}_{t}$ whose classes form a
basis of the ambiguous ideal class group $S_{K}[\lambda]$.
Similarly, we also look for representative ideals
$\mathfrak{b}_{1},\cdots,\mathfrak{b}_{t-s_{1}-\cdots-s_{i}}$ whose
classes form a basis of $(\lambda^i S_{K})[\lambda]$ for $i=1,2$.
For this purpose, we find ideals whose classes generate
$S_{K}^{(\tau),i}$ for $i=1,2,3$. \\
We observe that the ambiguous ideal class group $S_k[\lambda]$ may
be identified with the group $S_{K,s}$ of strongly ambiguous ideal
classes, excepting the case when at least one of
$\zeta^{i}(1+\zeta)^{j} \in N_{K/F}(K^{\ast})$, and $\zeta \not\in
N_{K/F}(E_{K})$,
where $E_{K}$ is the group of units of $K$. \\

\noindent We note that a necessary condition for the exceptional
case to occur is that for any $\pi_{k}|x$, one has
$\zeta^{i}(1+\zeta)^{j} \equiv a^{5} \pmod{ (\pi_{k})}$ for some
$a\in \Z[\zeta]$ and some $i,j$. There are two possible situations
when the exceptional case occurs. Namely, if both $\zeta, 1+\zeta$
are norms of elements from $K^{\ast}$, but neither of them is a norm
from $E_{K}$, then $S_{K}[\lambda]$ is the direct product of
$S_{K,s}$ and two cyclic groups of order $5$. In other exceptional
cases, $S_{K}[\lambda]$ is the direct product of $S_{K,s}$ and a
cyclic group of order $5$. \vskip 5mm

\subsection{Using ideles to express in terms of Hilbert symbols}
\vskip 3mm

\noindent We saw in section 4 how to obtain matrices with entries in
$\mathbb{F}_l$ whose ranks are equal to the $s_i$'s. In this
section, where $l=5$ and the genus fields are chosen as above, we
explain what these matrices simplify to.\\
Using the notation of Proposition \ref{generators}, we choose prime
ideals $\mathfrak{B}_{i}$ in $K$ such that $5 \mathfrak{B}_{i}=
(\pi_{i})$ for $1\leq i \leq g$. If $(\lambda)$ ramifies in $K/F$,
we let $\mathfrak{I}$ denote the prime ideal in $K$ such that
$5 \mathfrak{I}= (\lambda)$. If there exists ambiguous ideal
classes of $K/F$ which are not strongly-ambiguous, we let
$\mathfrak{B}$ be a prime ideal which is contained in one such class
and is relatively prime to $x_{1},\cdots,x_{t}$, where
$M_{1}=K(x_{1}^{\frac{1}{5}},\cdots,x_{t}^{\frac{1}{5}})$ and
$x_{1},\cdots,x_{t} \in F$. If $q^{\ast}=2$ and $q=0$, we choose
$\mathfrak{B^{\prime}}$ to be a prime ideal contained in another
class (from $\mathfrak{B}$) of ideal which is ambiguous
but not strongly-ambiguous, and is relatively prime to $\mathfrak{B}$,$x_{1},\cdots,x_{t}$.  \\
Let $I_{K}^{(\tau)}$ denote the free abelian group generated by
these prime ideals. In other words, $I_K^{(\tau)}$ is generated by
$\mathfrak{B}_{1},\cdots,\mathfrak{B}_{g},$ and $\mathfrak{I}$ (in
case $(\lambda)$ ramifies in $K/F$), and $\mathfrak{B}$ (in the case
when there exist ambiguous ideal classes which are not
strong-ambiguous), and also $\mathfrak{B^{\prime}}$ (in case $q^{\ast}=2$ and $q=0$). \\
Let $D_{K}^{(\tau)} = I_{K}^{(\tau)}/5 I_{K}^{(\tau)}$. Viewed
as a vector space over $\mathbb{F}_{5}$, let $D_{K}^{(\tau)}$ have
dimension $u$. Then $u=g,g+1,g+2$ or $g+3$ in the four possibilities
mentioned above respectively. Now, the map $I_{K}^{(\tau)} \to
S_{K}[\lambda]$ sending each ideal to its ideal class induces
surjective homomorphisms
$$\omega_{1}: D_{K}^{(\tau)} = I_{K}^{(\tau)}/5 I_{K}^{(\tau)}
\to S_{K}[\lambda].$$

\noindent Recall the map $\phi_{1} : S_K[\lambda] \to
\mathbb{F}_5^t$ constructed in the proof of Theorem \ref{thm2}.
Define $\eta_{1}:= \phi_{1}\circ \omega_{1}: D_{K}^{(\tau)} \to
\mathbb{F}_{5}^{t}$.\\
For $1\leq i \leq t, 1\leq j \leq g,$ let
$\mu_{ij}$ denote the Artin symbol
$\bigg(\frac{K(x_{i}^{\frac{1}{5}})/K}{\mathfrak{B}_{j}^{e_{j}}}\bigg).$
Further, suppose
$$\mu_{i(g+1)} = \bigg(\frac{K(x_{i}^{\frac{1}{5}})/K}{\mathfrak{I}}\bigg)
\text {   for } 1\leq i \leq t, \text{ if } (\lambda) \text{
ramifies in }K/F,$$ and
$$\mu_{i(g+2)} = \bigg(\frac{K(x_{i}^{\frac{1}{5}})/K}{\mathfrak{B}}\bigg) \text {   for }
1\leq i \leq t, \text{ if}~ S_K[\lambda] \setminus S_{K,s} \neq
\emptyset,$$ and
$$\mu_{iu} = \bigg(\frac{K(x_{i}^{\frac{1}{5}})/K}{\mathfrak{B^{\prime}}}\bigg) \text {   for }
1\leq i \leq t, \text{ if}~ |S_K[\lambda]/S_{K,s}| > 5.$$
 If $\gamma_{ij}\in \mathbb{F}_{\ell}$ are defined by
the power symbol $\zeta^{\gamma_{ij}} =
(x_{i}^{\frac{1}{\ell}})^{\mu_{ij}-1}$, let $C_{1}$ be the matrix
$(\gamma_{ij}), 1\leq i \leq t, 1\leq j \leq u$.
It is clear that $C_{1}$ is the matrix of $\eta_{1}$ with respect to
the ordered basis
$\{\mathfrak{B}_{1}^{e_{1}},\cdots,\mathfrak{B}_{g}^{e_{g}},\mathfrak{I}$
(if included),$\mathfrak{B}$ (if included), and $\mathfrak{B^{\prime}}$ (if included)\}. Since $\omega_{1}$
is surjective, ${\rm rank} C_{1}= {\rm rank} A_{1}= s_{1}$ (see Theorem
\ref{thm2}).\\

\noindent We next construct ideles
$a_{\mathfrak{B_{1}}},\cdots,a_{\mathfrak{B_{g}}},a_{\mathfrak{I}},a_{\mathfrak{B}},a_{\mathfrak{B^{\prime}}}
\in J_{K}$, the idele group of $K$, such that
$$(a_{\mathfrak{B_{j}}}, K(x_{i}^{\frac{1}{5}})/K) = \bigg(\frac{K(x_{i}^{\frac{1}{5}})/K}{\mathfrak{B}_{j}^{e_{j}}}\bigg) \text{  for }1\leq i \leq t, 1\leq j \leq g,$$
$$(a_{\mathfrak{I}}, K(x_{i}^{\frac{1}{5}})/K) = \bigg(\frac{K(x_{i}^{\frac{1}{5}})/K}{\mathfrak{I}}\bigg) \text{  for }1\leq i \leq t, $$
$$(a_{\mathfrak{B}}, K(x_{i}^{\frac{1}{5}})/K) = \bigg(\frac{K(x_{i}^{\frac{1}{5}})/K}{\mathfrak{B}}\bigg) \text{  for }1\leq i \leq t, $$
$$(a_{\mathfrak{B^{\prime}}}, K(x_{i}^{\frac{1}{5}})/K) = \bigg(\frac{K(x_{i}^{\frac{1}{5}})/K}{\mathfrak{B^{\prime}}}\bigg) \text{  for }1\leq i \leq t. $$
This is done as follows. Let
$$a_{\mathfrak{B_{j}}} =(\cdots,1,x^{\frac{1}{5}},1,\cdots) \text{   for } 1\leq j \leq g,$$
the idele which is 1 at all places except at the place corresponding to $\mathfrak{B_{j}}$, where it is $x^{\frac{1}{5}}$. Let
$$a_{\mathfrak{I}} =(\cdots,1,x_{\mathfrak{I}},1,\cdots),$$
the idele which is 1 at all places except at the place corresponding to $\mathfrak{I}$, where we insert an element $x_{\mathfrak{I}} \in K$, such that $\mathfrak{I}|x_{\mathfrak{I}}$, but $\mathfrak{I}^2 \nmid x_{\mathfrak{I}}$.Let
$$a_{\mathfrak{B}} =(\cdots,1,x_{\mathfrak{B}},1,\cdots),$$
the idele which is 1 at all places except at the place corresponding to $\mathfrak{B}$, where we insert an element $x_{\mathfrak{B}} \in K$, such that $\mathfrak{B}|x_{\mathfrak{B}}$, but $\mathfrak{B}^2 \nmid x_{\mathfrak{B}}$.Let
$$a_{\mathfrak{B^{\prime}}} =(\cdots,1,x_{\mathfrak{B^{\prime}}},1,\cdots),$$
the idele which is 1 at all places except at the place corresponding to $\mathfrak{B^{\prime}}$, where we insert an element $x_{\mathfrak{B^{\prime}}} \in K$, such that $\mathfrak{B^{\prime}}|x_{\mathfrak{B^{\prime}}}$, but $\mathfrak{B^{\prime}}^2 \nmid x_{\mathfrak{B^{\prime}}}$.\\
Now
$$(a_{\mathfrak{B_{j}}}, K(x_{i}^{\frac{1}{5}})/K)|F(x_{i}^{\frac{1}{5}}) = (N_{K/F}(a_{\mathfrak{B_{j}}}), F(x_{i}^{\frac{1}{5}})/F),$$
where $N_{K/F}(a_{\mathfrak{B_{j}}})$ is the idele $(\cdots,1,x,1,\cdots)$ of $F$ which is 1 at all places except at the place corresponding to $(\pi_{j})$, where it is $x$. We denote $N_{K/F}(a_{\mathfrak{B_{j}}})$ by $a_{\pi_{j}}$. Similarly,
$$(a_{\mathfrak{I}}, K(x_{i}^{\frac{1}{5}})/K)|F(x_{i}^{\frac{1}{5}}) = (a_{\lambda}, F(x_{i}^{\frac{1}{5}})/F),$$
where $a_{\lambda} = N_{K/F}(a_{\mathfrak{I}})=(\cdots,1,x_{\lambda},1,\cdots)$ with $x_{\lambda}=N_{K/F}(x_{\mathfrak{I}})$. Also,
$$(a_{\mathfrak{B}}, K(x_{i}^{\frac{1}{5}})/K)|F(x_{i}^{\frac{1}{5}}) = (a_{\pi}, F(x_{i}^{\frac{1}{5}})/F),$$
where $a_{\pi} = N_{K/F}(a_{\mathfrak{B}})=(\cdots,1,x_{\pi},1,\cdots)$, where $\pi = N_{K/F}(\mathfrak{B})$, with $x_{\pi}=N_{K/F}(x_{\mathfrak{B}})$. And finally,
$$(a_{\mathfrak{B^{\prime}}}, K(x_{i}^{\frac{1}{5}})/K)|F(x_{i}^{\frac{1}{5}}) = (a_{\pi^{\prime}}, F(x_{i}^{\frac{1}{5}})/F),$$
where $a_{\pi^{\prime}} = N_{K/F}(a_{\mathfrak{B^{\prime}}})=(\cdots,1,x_{\pi^{\prime}},1,\cdots)$, where $\pi^{\prime} = N_{K/F}(\mathfrak{B^{\prime}})$, with $x_{\pi^{\prime}}=N_{K/F}(x_{\mathfrak{B^{\prime}}})$.\\

\noindent We now consider $\zeta^{\gamma_{ij}} =
(x_{i}^{\frac{1}{5}})^{\mu_{ij}-1}$. From our calculation we can
replace
$$ \mu_{ij} \text {  by  }  \nu_{ij} = (a_{\pi_{j}},F(x_{i}^{\frac{1}{5}})/F) \text{  for } 1\leq i \leq t , 1\leq j \leq g,$$
$$ \mu_{i(g+1)} \text {  by  }  \nu_{i(g+1)} = (a_{\lambda},F(x_{i}^{\frac{1}{5}})/F) \text{  for } 1\leq i \leq t , $$
$$ \mu_{i(g+2)} \text {  by  }  \nu_{i(g+2)} = (a_{\pi},F(x_{i}^{\frac{1}{5}})/F) \text{  for } 1\leq i \leq t , $$
$$ \mu_{i(g+3)} \text {  by  }  \nu_{i(g+3)} = (a_{\pi^{\prime}},F(x_{i}^{\frac{1}{5}})/F) \text{  for } 1\leq i \leq t. $$
So we have,
$$\zeta^{\gamma_{ij}} = (x_{i}^{\frac{1}{5}})^{\nu_{ij}-1} \text{  for all }i,j.$$
Since the ideles $a_{\pi_{j}} (1\leq j \leq g), a_{\lambda}, a_{\pi}, a_{\pi^{\prime}}$ are local ideles, we may identify the expressions $(x_{i}^{\frac{1}{5}})^{\nu_{ij}-1}$ with the degree $5$ Hilbert symbols $\Big(\frac{x_{i},x}{(\pi_{j})}\Big),\Big(\frac{x_{i},x_{\lambda}}{(\lambda)}\Big),\Big(\frac{x_{i},x_{\pi}}{(\pi)}\Big),\Big(\frac{x_{i},x_{\pi^{\prime}}}{(\pi^{\prime})}\Big)$ for the local fields $F_{\pi_{j}}(x_{i}^{\frac{1}{5}})/F_{\pi_{j}}, F_{\lambda}(x_{i}^{\frac{1}{5}})/F_{\lambda},F_{\pi}(x_{i}^{\frac{1}{5}})/F_{\pi},F_{\pi^{\prime}}(x_{i}^{\frac{1}{5}})/F_{\pi^{\prime}}$ respectively.\\

\noindent
Finally we want to simplify, $\Big(\frac{x_{i},x_{\lambda}}{(\lambda)}\Big),\Big(\frac{x_{i},x_{\pi}}{(\pi)}\Big)$ and $\Big(\frac{x_{i},x_{\pi^{\prime}}}{(\pi^{\prime})}\Big)$.\\
We may write $x_{\lambda} = \lambda y_{\lambda} z_{\lambda}^{-1}$,
where $y_{\lambda},z_{\lambda}$ are integers in $F$, congruent to
$\pm1,\pm 2 \pmod{ \lambda}$. Since $x_{i} \equiv \pm 1,\pm 7 \pmod{
\lambda^{5}}$, let $\alpha^{5}=x_{i}$, and write $\alpha= \pm 1+
\lambda y$ or $\alpha = \pm 2 + \lambda y$ (respectively).  Since
$y$ is a root of a polynomial $f(Y) \in
\mathcal{O}_{F_{\lambda}}[Y]$, such that $f(Y)\equiv Y^{5}-Y-c \pmod
{\lambda}$, we have $f^{\prime}(y) \equiv -1 \neq 0 \pmod{ \lambda}$.
Thus $F_{\lambda}(y) = F_{\lambda}(x_{i}^{\frac{1}{5}})$ is
unramified over $F_{\lambda}$. Thus we have,
$\Big(\frac{x_{i},y_{\lambda}}{(\lambda)}\Big) =
\Big(\frac{x_{i},z_{\lambda}}{(\lambda)}\Big) =1$ (See,
\cite{Serre}[page 209, Exercise 5]). So,
$$\Big(\frac{x_{i},x_{\lambda}}{(\lambda)}\Big)=\Big(\frac{x_{i},\lambda}{(\lambda)}\Big)\Big(\frac{x_{i},y_{\lambda}}{(\lambda)}\Big)\Big(\frac{x_{i},z_{\lambda}}{(\lambda)}\Big)^{-1} =\Big(\frac{x_{i},\lambda}{(\lambda)}\Big) , \text{  for } 1\leq i \leq t. $$

\noindent Now, we write $x_{\pi}= \pi y_{\pi}$, where $y_{\pi}$ is
relatively prime to $\pi$. Since $\mathfrak{B}$ was chosen
relatively prime to $x_{1},\cdots,x_{t}$, then $\pi$ is relatively
prime to $x_{i}$ for all $i$. Hence
$$\Big(\frac{x_{i},x_{\pi}}{(\pi)}\Big)=\Big(\frac{x_{i},\pi}{(\pi)}\Big)\Big(\frac{x_{i},y_{\pi}}{(\pi)}\Big) =\Big(\frac{x_{i},\pi}{(\pi)}\Big) , \text{  for } 1\leq i \leq t. $$

\noindent Finally, let us write $x_{\pi^{\prime}}= \pi^{\prime}
y_{\pi^{\prime}}$, where $y_{\pi^{\prime}}$ is relatively prime to
$\pi^{\prime}$. Since $\mathfrak{B^{\prime}}$ was chosen relatively
prime to $x_{1},\cdots,x_{t}$, then $\pi^{\prime}$ is relatively
prime to $x_{i}$ for all $i$. Hence
$$\Big(\frac{x_{i},x_{\pi^{\prime}}}{(\pi^{\prime})}\Big)=\Big(\frac{x_{i},\pi^{\prime}}{(\pi^{\prime})}\Big)\Big(\frac{x_{i},y_{\pi^{\prime}}}{(\pi^{\prime})}\Big) =\Big(\frac{x_{i},\pi^{\prime}}{(\pi^{\prime})}\Big) , \text{  for } 1\leq i \leq t. $$
With these notations, we may describe the matrix whose entries are
power residue symbols and, whose {\rm rank} gives us the {\rm rank}
of the piece $H_1$ (see \ref{prop1}) of the $l$-class group. \vskip
5mm

\begin{thm}
Let $F=\Q(\zeta)$, $K=F(x^{\frac{1}{5}})$, $x= u
\lambda^{e_{\lambda}}  \pi_{1}^{e_{1}} \cdots \pi_{g}^{e_{g}}$ as
above. Let $M_{1}=K(x_{1}^{\frac{1}{5}},\cdots,x_{t}^{\frac{1}{5}})$
denote the genus field of $K/F$, where $[M_{1}:K]=5^{t}$, $x_{i} \in
F$ for $1\leq i \leq t$, and $x_{i} \equiv \pm1,\pm7 \pmod{
\lambda^{5}}$. Let $\mathfrak{B},\mathfrak{B^{\prime}}$ be ideals as
above  defined respectively when there exist ambiguous ideal classes
which are not strongly-ambiguous, and when $q^{\ast}=2,q=0$. Let
$(\pi)=N_{K/F}(\mathfrak{B})$ and $(\pi^{\prime}) =
N_{K/F}(\mathfrak{B^{\prime}})$, where $N_{K/F}$ is the norm map
from $K$ to $F$. For $1\leq i \leq t, 1\leq j \leq g,$ let
$\nu_{ij}$ denote the degree $5$ Hilbert symbol
$\Big(\frac{x_{i},x}{(\pi_{j})}\Big)$. Further, suppose
$$\nu_{i(g+1)} = \Big(\frac{x_{i},\lambda}{(\lambda)}\Big)
\text {   for } 1\leq i \leq t, \text{ if } (\lambda) \text{
ramifies in }K/F,$$ and
$$\nu_{i(g+2)} = \Big(\frac{x_{i},\pi}{(\pi)}\Big) \text {   for }
1\leq i \leq t, \text{ if}~ S_K^{(\tau)} \setminus S_{K,s}^{(\tau)} \neq
\emptyset,$$ and
$$\nu_{iu} = \Big(\frac{x_{i},\pi^{\prime}}{(\pi^{\prime})}\Big) \text {   for }
1\leq i \leq t, \text{ if}~ |S_K^{(\tau)} /S_{K,s}^{(\tau)}| > 5.$$
 If $\gamma_{ij}\in \mathbb{F}_{\ell}$ are defined by
the power symbol $\zeta^{\gamma_{ij}} =
(x_{i}^{\frac{1}{\ell}})^{\nu_{ij}-1}$, and $C_{1}$ is the matrix
$(\gamma_{ij}), 1\leq i \leq t, 1\leq j \leq u$, we have
$$s_{1}= {\rm rank} H_1 =
 {\rm rank} C_{1}.$$
\end{thm}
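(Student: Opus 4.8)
The plan is to deduce this directly from Theorem \ref{thm2}, transporting the rank computation given there --- which is phrased in terms of global Artin symbols against an abstract basis of $S_K[\lambda]$ --- onto the concrete generating set furnished by the ideals $\mathfrak{B}_1^{e_1},\ldots,\mathfrak{B}_g^{e_g},\mathfrak{I},\mathfrak{B},\mathfrak{B}'$, and then replacing those Artin symbols by the local Hilbert symbols $\nu_{ij}$. First I would recall the surjection $\omega_1 : D_K^{(\tau)} = I_K^{(\tau)}/5I_K^{(\tau)} \to S_K[\lambda]$ sending an ideal to its class, and form the composite $\eta_1 = \phi_1 \circ \omega_1$, where $\phi_1 : S_K[\lambda] \to \mathbb{F}_5^{\,t}$ is the map built in the proof of Theorem \ref{thm2}. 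By construction $C_1$ is the matrix of $\eta_1$ relative to the displayed ordered basis of $D_K^{(\tau)}$ and the Kummer basis of $\mathbb{F}_5^{\,t}$. Since $\omega_1$ is surjective we have $\operatorname{im}\eta_1 = \operatorname{im}\phi_1$, hence $\operatorname{rank} C_1 = \operatorname{rank} A_1 = s_1 = \operatorname{rank} H_1$. This establishes the identity with $\gamma_{ij}$ defined through the global Artin symbols $\mu_{ij}$ rather than the $\nu_{ij}$.

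The remaining task is to show $(x_i^{1/5})^{\mu_{ij}-1} = (x_i^{1/5})^{\nu_{ij}-1}$, i.e.\ that the matrix entries are unchanged when one passes to Hilbert symbols; this is exactly the content of the idele-theoretic computation carried out above, which I would assemble as follows. Using the single-place ideles $a_{\mathfrak{B}_j}, a_{\mathfrak{I}}, a_{\mathfrak{B}}, a_{\mathfrak{B}'}$ and the fact that $M_1/K$ is unramified (so an ideal and any idele representing it have the same Artin symbol), the compatibility of the reciprocity map with restriction and norm gives
$$\big(a_{\mathfrak{B}_j}, K(x_i^{1/5})/K\big)\big|_{F(x_i^{1/5})} = \big(N_{K/F}(a_{\mathfrak{B}_j}), F(x_i^{1/5})/F\big),$$
with $N_{K/F}(a_{\mathfrak{B}_j}) = a_{\pi_j}$ supported at $(\pi_j)$, and likewise at the places over $\lambda$, $\pi$, $\pi'$. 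Local class field theory then identifies each such local norm-residue symbol with the degree-$5$ Hilbert symbol, producing $\nu_{ij} = \big(\frac{x_i,\,x}{(\pi_j)}\big)$ and its analogues.

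Finally I would invoke the simplifications of $\big(\frac{x_i,x_\lambda}{(\lambda)}\big)$, $\big(\frac{x_i,x_\pi}{(\pi)}\big)$, $\big(\frac{x_i,x_{\pi'}}{(\pi')}\big)$ established above. Writing $x_\lambda = \lambda\, y_\lambda z_\lambda^{-1}$, the hypothesis $x_i \equiv \pm1,\pm7 \pmod{\lambda^5}$ forces $F_\lambda(x_i^{1/5})/F_\lambda$ to be unramified (the defining polynomial reduces to an Artin--Schreier form $Y^5 - Y - c$ with nonvanishing derivative mod $\lambda$), so the symbols in the units $y_\lambda, z_\lambda$ vanish and only $\big(\frac{x_i,\lambda}{(\lambda)}\big)$ remains; and since $\mathfrak{B},\mathfrak{B}'$ were chosen coprime to the $x_i$, writing $x_\pi = \pi y_\pi$ with $y_\pi$ a unit at $\pi$ collapses the symbol to $\big(\frac{x_i,\pi}{(\pi)}\big)$, similarly at $\pi'$. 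Substituting these into $C_1$ yields precisely the $\nu_{ij}$ of the statement, whence $s_1 = \operatorname{rank} H_1 = \operatorname{rank} C_1$. I expect the main obstacle to be bookkeeping across the several cases --- whether $(\lambda)$ ramifies, whether the strongly ambiguous classes exhaust $S_K[\lambda]$, and whether $q^{\ast}=2,\ q=0$ --- so that the dimension $u$ of $D_K^{(\tau)}$ and the chosen list of auxiliary ideals are exactly what makes $\omega_1$ surjective, while pushing the idele computation and the norm maps $N_{K/F}$ on the inserted local elements through uniformly.
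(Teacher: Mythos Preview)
Your proposal is correct and follows essentially the same approach as the paper: the argument there is precisely to compose $\phi_1$ with the surjection $\omega_1$ from $D_K^{(\tau)}$, use surjectivity to equate $\operatorname{rank} C_1$ with $\operatorname{rank} A_1 = s_1$ from Theorem~\ref{thm2}, and then pass from Artin symbols to local Hilbert symbols via the idele-theoretic norm compatibility, with the same simplifications at $(\lambda)$, $(\pi)$, $(\pi')$ that you describe.
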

\vskip 5mm

\noindent Finally, we discuss how the above theorem can be
generalized to determine the ranks $s_{i}$'s for $i>1$. Observe that
$$S_{K}[\lambda] \supset (\lambda S_K)[\lambda] \supset (\lambda^2
S_{K})[\lambda].$$ Since
$cl(\mathfrak{B}_{1}),\cdots,cl(\mathfrak{B}_{g}),cl(\mathfrak{I})$
(if included), $cl(\mathfrak{B})$ (if included), and
$\mathfrak{B^{\prime}}$ (if included) generate $S_{K}[\lambda]$,
there exists a basis of $(\lambda^{i-1}S_{K})[\lambda]$, $\{
cl(\Gamma_{i,1}),\cdots, cl(\Gamma_{i,t-s_{1}-\cdots-s_{i-1}})\}$
consisting of elements which are $\mathbb{F}_{5}$-linear
combinations of
$cl(\mathfrak{B}_{1}),\cdots,cl(\mathfrak{B}_{g}),cl(\mathfrak{I})$,
$cl(\mathfrak{B}),cl(\mathfrak{B^{\prime}})$, for $i=2,3$. \\
Let $\Gamma_{i,1},\cdots,\Gamma_{i,t-s_{1}-\cdots-s_{i-1}}$ be some
representative ideals for the respective classes. With these choices,
we have the following theorem expressing the ranks $s_i$ in terms of
matrices over $\mathbb{F}_{\ell}$:\\

\begin{thm}
Let $F=\Q(\zeta)$, $K=F(x^{\frac{1}{5}})$, where $x=u
\lambda^{e_{\lambda}} \pi_{1}^{e_{1}} \cdots
\pi_{g}^{e_{g}}$ as above. Let $M_{1}$ be the genus field of $K/F$
and, for $i=1,2$, let $M_{i+1}=
K(y_{1}^{\frac{1}{5}},\cdots,y_{t-s_{1}-\cdots-s_{i}}^{\frac{1}{5}})$,
as in Theorem\ref{thm3}. Let $\Gamma_{i+1,1},\cdots,\Gamma_{i+1,t-s_{1}-\cdots-s_{i}}$
 be as in the previous paragraph.
 Denote
$$\mu_{jk} = \bigg( \frac{K(y_{j}^{\frac{1}{5}})/K}{\Gamma_{i+1,k}}\bigg) \text {   for } 1\leq j,k \leq t-s_{1}-\cdots-s_{i}.$$
 If
$\gamma_{jk}$ are defined by $\zeta^{\gamma_{jk}} =
(y_{j}^{\frac{1}{\ell}})^{\mu_{jk}-1}$, and $C_{i+1}=(\gamma_{jk}),
1\leq j,k \leq t-s_{1}-\cdots -s_{i}$, then
$$s_{i+1}= {\rm rank} H_{i+1}= {\rm rank}  C_{i+1}.$$

\end{thm}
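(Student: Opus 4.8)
The plan is to mirror the proof of Theorem~\ref{thm3} exactly, transporting it from abstract Artin symbols on the ambiguous ideal classes to the explicit Hilbert-symbol computation already carried out in this section for $s_1$. The key observation is that Theorem~\ref{thm3} has already established the abstract identity $s_{i+1} = {\rm rank}~A_{i+1}$, where $A_{i+1} = (\beta_{jk})$ is built from the Artin symbols $\big( \frac{K(y_j^{1/5})/K}{\mathfrak{b}_k} \big)$ on a basis $\{cl(\mathfrak{b}_k)\}$ of $(\lambda^i S_K)[\lambda]$. So the only real content here is to show that the matrix $C_{i+1} = (\gamma_{jk})$ defined via the Hilbert symbols computed against the specific ideals $\Gamma_{i+1,k}$ has the same rank as $A_{i+1}$, hence equals $s_{i+1}$.

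First I would set up the surjection analogous to $\omega_1$ from the $s_1$ case. The classes $cl(\mathfrak{B}_1),\ldots,cl(\mathfrak{B}_g),cl(\mathfrak{I}),cl(\mathfrak{B}),cl(\mathfrak{B}^{\prime})$ (those that are included) generate $S_K[\lambda]$ as an $\mathbb{F}_5$-vector space, and by the paragraph preceding the statement, for $i=2,3$ one can select an $\mathbb{F}_5$-basis $\{cl(\Gamma_{i,1}),\ldots,cl(\Gamma_{i,t-s_1-\cdots-s_{i-1}})\}$ of the subspace $(\lambda^{i-1}S_K)[\lambda]$ whose members are $\mathbb{F}_5$-linear combinations of those generators. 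Thus the $\Gamma_{i+1,k}$ are genuine representatives for a basis of $(\lambda^i S_K)[\lambda]$, which is precisely the role played by the $\mathfrak{b}_k$ in Theorem~\ref{thm3}. Consequently $C_{i+1}$ is exactly the matrix of the homomorphism $\phi_{i+1}: (\lambda^i S_K)[\lambda] \to \mathbb{F}_5^{t-s_1-\cdots-s_i}$ from the proof of Theorem~\ref{thm3}, written with respect to this chosen basis; since the basis is a legitimate choice, ${\rm rank}~C_{i+1} = {\rm rank}~A_{i+1} = s_{i+1}$.

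The second ingredient is to verify that the entries $\gamma_{jk}$, defined by $\zeta^{\gamma_{jk}} = (y_j^{1/5})^{\mu_{jk}-1}$ with $\mu_{jk} = \big( \frac{K(y_j^{1/5})/K}{\Gamma_{i+1,k}} \big)$, are computed correctly by the Artin-symbol prescription of Theorem~\ref{thm3}, so that no separate idelic simplification is needed. Here I would note that, unlike in Theorem~5.9 where the genus field $M_1$ had generators $x_i \in F$ and the idelic machinery converted the Artin symbols into clean local Hilbert symbols $\big(\frac{x_i,x}{(\pi_j)}\big)$, the present statement retains the Artin symbols $\mu_{jk}$ directly because the generators $y_j$ of the higher genus fields $M_{i+1}$ need not lie in $F$ nor be of the special congruence shape $\equiv \pm 1, \pm 7 \pmod{\lambda^5}$. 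So the simplification to Hilbert symbols performed for $C_1$ does not literally carry over, and the theorem is stated at the level of Artin symbols; the proof is therefore just the identification of $C_{i+1}$ with the matrix of $\phi_{i+1}$ together with the surjectivity that guarantees rank preservation.

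The main obstacle I anticipate is confirming that the chosen $\Gamma_{i+1,k}$ really do furnish a basis of $(\lambda^i S_K)[\lambda]$ and that the Artin symbols $\big( \frac{K(y_j^{1/5})/K}{\Gamma_{i+1,k}} \big)$ are well defined and compute the map $\phi_{i+1}$ on that basis. Well-definedness of the Artin symbols follows from the genus field $M_{i+1}$ being unramified over $K$ (so its conductor is trivial, exactly as argued at the start of the proof of Theorem~\ref{thm3}). The basis claim rests on the preceding paragraph's assertion that such $\mathbb{F}_5$-linear combinations can be arranged; I would make this precise by appealing to Corollary~3.3 and the recursive deletion of generators described in the corollary to Proposition~\ref{generators}, which controls how $(\lambda^i S_K)[\lambda]$ sits inside $S_K[\lambda]$ and guarantees that generators of the larger group specialize to a basis of the smaller one. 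Once these two points are in place, the equality ${\rm rank}~C_{i+1} = {\rm rank}~H_{i+1} = s_{i+1}$ is immediate from Theorem~\ref{thm3}.
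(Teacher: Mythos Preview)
Your proposal is correct and follows essentially the same approach as the paper: identify $C_{i+1}$ as the matrix of the map $\phi_{i+1}$ from Theorem~\ref{thm3} with respect to the basis $\{cl(\Gamma_{i+1,1}),\ldots,cl(\Gamma_{i+1,t-s_1-\cdots-s_i})\}$ of $(\lambda^i S_K)[\lambda]$, and then invoke Theorem~\ref{thm3} directly. The paper's proof is in fact just these two sentences; your additional remarks about why no further idelic simplification is carried out here are accurate commentary but not needed for the argument.
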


\begin{proof}
We have the map $$\phi_{i+1}: (\lambda^i S_{K})[\lambda] \to
\mathbb{F}_{5}^{t-s_{1}-\cdots-s_{i}}$$ constructed in the proof of
Theorem \ref{thm3}. Clearly, $C_{i+1}$ is the matrix of $\phi_{i+1}$
with respect to the ordered basis
$$\{ \Gamma_{i+1,1},\cdots,\Gamma_{i+1,t-s_{1}-\cdots-s_{i}}
\}.$$ Thus, from Theorem \ref{thm3},
$$s_{i+1} = {\rm rank} C_{i+1}.$$
\end{proof}

\vskip 5mm

\noindent{\bf Remarks.} In conclusion, the above theorems show in
principle how to compute $t,s_{1},s_{2},s_{3}$. We can use them to
find the {\rm rank} of $S_{K}$ using the formula obtained from Theorem
\ref{thm1}, namely,
$${\rm rank} S_{K} = 4t-3s_{1}-2s_{2}-s_{3}.$$
However, concrete determination of $s_2,s_3$ seems to be difficult.
In particular, it would be useful to find explicit generators for
the groups $(\lambda^i S_{K})[\lambda]$ for $i \geq 1$. We also
obtain a bound for the {\rm rank} of $S_{K}$ in terms of $t$ and
$s_{1}$ as follows,
$$2t-s_{1} \leq {\rm rank} S_{K} \leq 4t-3s_{1}.$$
\vskip 5mm

\subsection{Applications - explicit results}

\noindent We apply our result in various situations to give sharp
bounds for the rank of the $5$-class group.

\begin{thm}\label{-7mod25}
Let $p_{i} \equiv \pm 7 \pmod{25}$ for $1 \leq i \leq r$ be primes
and $r \geq 2$. Let $n=p_{1}^{a_{1}} \cdots p_{r}^{a_{r}}$, where $1
\leq a_{i} \leq 4$ for $1 \leq i \leq r$. Let $F=\Q(\zeta_{5})$ and
$K=F(n^{\frac{1}{5}})$. Assume that all ambiguous ideal classes of
$K/F$ are strongly ambiguous.
Then, the $\lambda^2$-rank of $S_{K}$ is $r-1$ and $2r-2 \leq \mathrm{rank } S_{K} \leq 4r-4$. \\
If there are ambiguous ideal classes which are not strongly
ambiguous, then $s_{1} \leq 2$, and the $\lambda^{2}$-rank of
$S_{K}$ is greater than or equal to $r-3$ and $\max(2r-4,r-1) \leq
\mathrm{rank } S_{K} \leq 4r-4$.
\end{thm}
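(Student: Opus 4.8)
The plan is to compute $t$ and $s_{1}$ explicitly and then read off every conclusion from the rank formula and bounds of Proposition~\ref{prop1} together with the Hilbert--symbol description of $s_{1}$.

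First I would record the relevant arithmetic. Since $7\equiv 2$ and $-7\equiv 3\pmod 5$, any $p_{i}\equiv\pm7\pmod{25}$ has order $4$ in $(\Z/5)^{\ast}$, hence is inert in $F=\Q(\zeta)$; so $(p_{i})$ is a degree--$4$ prime of $\Z[\zeta]$ and $n=p_{1}^{a_{1}}\cdots p_{r}^{a_{r}}$ is already the prime factorization of $x=n$, i.e.\ $g=r$, $\pi_{i}=p_{i}$, $e_{i}=a_{i}$, $e_{\lambda}=0$. The set $\{\pm1,\pm7\}$ is the cyclic order--$4$ subgroup of $(\Z/25)^{\ast}$ generated by $7$ (as $7^{2}\equiv-1$), so $n\equiv\pm1,\pm7\pmod{25}$; since $\Z\cap(\lambda^{5})=25\Z$ this gives $n\equiv\pm1,\pm7\pmod{\lambda^{5}}$, and therefore $(\lambda)$ is unramified in $K/F$ by the ramification criterion (c) of Lemma~5.1. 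Each $(p_{i})$ ramifies because $1\le a_{i}\le4$, so the number of ramified primes is $d=r$.

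Next I would pin down $t$ through the genus field rather than by evaluating $q^{\ast}$ directly. As $(p_{i})=\mathfrak{B}_{i}^{5}$ in $K$ and $p_{i}\equiv\pm1,\pm7\pmod{\lambda^{5}}$, each $K(p_{i}^{1/5})/K$ is unramified, so $M'=K(p_{1}^{1/5},\dots,p_{r}^{1/5})$ is abelian over $F$ and unramified over $K$, hence contained in the genus field. Because $p_{1},\dots,p_{r}$ are independent in $F^{\ast}/(F^{\ast})^{5}$ while $n^{1/5}\in M'$ imposes exactly one relation, $[M':K]=5^{r-1}$, so $t\ge r-1$. On the other hand Hasse's formula as unwound in Section~5.1 gives $t=d+q^{\ast}-3\le r+2-3=r-1$ since $q^{\ast}\le 2$. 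Thus $t=r-1$ (equivalently $q^{\ast}=2$ and $M'=M_{1}$), and this holds in both the strongly and the non-strongly ambiguous case.

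The heart of the matter is the computation of $s_{1}=\mathrm{rank}\,C_{1}$ from the Hilbert--symbol theorem, taking $x_{i}=p_{i}$ and $M_{1}=M'$. The decisive local fact is that for an inert prime $p$ (so $p\equiv2,3\pmod5$) one has $5\mid 1+p+p^{2}+p^{3}=(p^{4}-1)/(p-1)$, whence $(p-1)\mid(p^{4}-1)/5$ and every rational integer prime to $p$ is a fifth power in $\Z[\zeta]/(p)=\mathbb{F}_{p^{4}}$; equivalently $\left(\frac{a}{(p)}\right)=1$ for all $a\in\Z$ with $p\nmid a$. Feeding this into the tame symbol formula, each entry of the first $r$ columns of $C_{1}$ is trivial: for $i\ne j$ it equals $\left(\frac{p_{i}}{(p_{j})}\right)^{a_{j}}=1$, and for $i=j$ it reduces to a product of the symbols $\left(\frac{-1}{(p_{j})}\right)$ and $\left(\frac{p_{k}}{(p_{j})}\right)$, again all equal to $1$. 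In the strongly ambiguous case these are all the columns, so $C_{1}=0$ and $s_{1}=0$; then the $\lambda^{2}$-rank is $t-s_{1}=r-1$, and the general bounds $2t-s_{1}\le\mathrm{rank}\,S_{K}\le 4t-3s_{1}$ (Corollary~3.5) become $2r-2\le\mathrm{rank}\,S_{K}\le 4r-4$.

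Finally, for the non-strongly ambiguous case I would use the description of $S_{K}[\lambda]$ as $S_{K,s}$ enlarged by one or two cyclic factors of order $5$: the matrix $C_{1}$ acquires one extra column (from $\mathfrak{B}$) and possibly a second (from $\mathfrak{B}'$), while its first $r$ columns still vanish by the same local computation. Hence $\mathrm{rank}\,C_{1}\le2$, i.e.\ $s_{1}\le2$, so the $\lambda^{2}$-rank $t-s_{1}\ge r-3$; combining $2t-s_{1}\ge 2r-4$ with the trivial inequality $\mathrm{rank}\,S_{K}\ge\mathrm{rank}\,S_{K}/\lambda S_{K}=t=r-1$ yields $\max(2r-4,r-1)\le\mathrm{rank}\,S_{K}\le 4r-4$. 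The main obstacle I anticipate is precisely the vanishing of the $p_{j}$-columns of $C_{1}$, that is, correctly reducing the degree--$5$ Hilbert symbols $\left(\frac{p_{i},n}{(p_{j})}\right)$ to power residue symbols of rational integers and invoking the inert-prime fifth-power lemma; a secondary point is bookkeeping the extra $\mathfrak{B},\mathfrak{B}'$ columns and justifying the passage between $S_{K}[\lambda]$ and $S_{K,s}$. I note that the stated upper bound $4r-4$ is the crude bound of Corollary~3.5 (the constraint $s_{2}+s_{3}=r-1$ in fact sharpens it), so no delicate determination of $s_{2},s_{3}$ is required here.
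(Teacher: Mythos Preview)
Your proposal is correct and follows essentially the same route as the paper: establish $d=r$ and $\lambda$ unramified, pin down $t=r-1$, show the $(r-1)\times r$ Hilbert-symbol matrix $C_{1}$ vanishes so $s_{1}=0$ in the strongly ambiguous case, and read off the bounds from Corollary~3.5. The only minor difference is in the determination of $t$: the paper argues $q^{\ast}=2$ directly by applying Hasse's formula to the auxiliary fields $F(p_{i}^{1/5})$ (where $d=1$ forces $q^{\ast}=2$, i.e.\ both $\zeta$ and $1+\zeta$ are fifth powers modulo each $p_{i}$), whereas you exhibit the genus field explicitly to get $t\ge r-1$ and use Hasse only as the upper bound; your explicit inert-prime lemma $(p-1)\mid(p^{4}-1)/5$ is the clean local input underlying both the vanishing of $C_{1}$ and, implicitly, the paper's appeal to \cite{Serre}.
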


\begin{proof}
Firstly we notice that $n \equiv \pm 1, \pm 7 \pmod{25}$. So $\lambda$ does not ramify in $K/F$. Looking at the fields $K_{i}=F(p_{i}^{\frac{1}{5}})$, one can easily see that $\zeta$ and
$1+\zeta$ are fifth powers modulo $p_{i}$ for all $i=1,\cdots,r$. Thus $q^{\ast}=q=2$ and $t=d-3+q^{\ast}=r-1$.\\
To compute $s_{1}$, let $x_{i}=p_{i}$ where $1 \leq i \leq r-1$. Using \cite{Serre}[Chapter 14, Section 3]  one can easily check that
$\Big(\frac{x_i,n}{p_{j}}\Big) = 1$ for $1 \leq i \leq r-1$ and $1 \leq j \leq r$. That is the $(r-1) \times r$ matrix $C_{1}$ is
the zero matrix. So, $s_{1}=0$.\\
Thus, we get $\lambda^2$-rank of $S_{K}$ is $t-s_{1}=r-1$. Since $2t-s_{1} \leq \mathrm{rank }S_{K} \leq 4t-3s_{1}$,
we obtain that $2r-2 \leq \mathrm{rank } S_{K} \leq 4r-4$.\\
the second part of the statement follows from the fact that $0 \leq
q \leq 1$, and then the matrix $C_{1}$ is of size $(r-1) \times
(r+q^{\ast}-q)$, which can have rank at most $2$.
\end{proof}

\begin{thm}\label{not-7mod25}
Let $p_{i} \equiv \pm 7 \pmod{25}$ for $1 \leq i \leq r$ be primes
and let $q_{j}$ be primes such that $q_{j} \equiv \pm 2 \pmod{5}$
but $q_{j} \not\equiv \pm7 \pmod{25}$ for $1 \leq j \leq s$. Let
$n=p_{1}^{a_{1}} \cdots p_{r}^{a_{r}}q_{1}^{b_{1}}\cdots
q_{s}^{b_{s}}$, where $1 \leq a_{i},b_{j} \leq 4$ for $1 \leq i \leq
r$ and $1 \leq j \leq s$. Let $n \not\equiv \pm 1, \pm 7 \pmod{25}$.
Let $F=\Q(\zeta_{5})$ and $K=F(n^{\frac{1}{5}})$. Assume that all
ambiguous ideal classes of $K/F$ are strongly ambiguous.
Then, the $\lambda^2$-rank of $S_{K}$ is $r+s-1$ and $2r+2s-2 \leq \mathrm{rank } S_{K} \leq 4r+4s-4$.\\
If there are ambiguous ideal classes which are not strongly
ambiguous, then $s_{1} \leq 1$, and the $\lambda^{2}$-rank of
$S_{K}$ is greater than or equal to $r+s-2$ and $\max(2r+2s-3,r+s-1)
\leq \mathrm{rank } S_{K} \leq 4r+4s-4$.
\end{thm}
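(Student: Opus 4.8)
The plan is to follow the template of the proof of Theorem \ref{-7mod25}; the two new features are the primes $q_j\equiv\pm2\pmod5$ and the ramification of $(\lambda)$. First I would fix the ramification data. Each $p_i\equiv\pm7\pmod{25}$ and each $q_j\equiv\pm2\pmod5$ reduces to $2$ or $3$ modulo $5$, so all of them are inert in $F=\Q(\zeta)$; hence $g=r+s$ and, up to a unit, $n$ is the product of the $g$ prime elements $p_1,\dots,p_r,q_1,\dots,q_s$. Since $n\not\equiv\pm1,\pm7\pmod{25}$, part (c) of the opening lemma of this section shows $(\lambda)$ ramifies in $K/F$, so $d=g+1=r+s+1$.

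The heart of the argument is the computation of $q^{\ast}$. Because each $q_j\not\equiv\pm1,\pm7\pmod{25}$ we have $N_{F/\Q}((q_j))=q_j^{4}\not\equiv1\pmod{25}$, so part (a) of the lemma gives $\zeta\notin N_{K/F}(K^{\ast})$ and thus $q^{\ast}\le1$. To get $q^{\ast}\ge1$ I would exhibit one nontrivial unit that is a norm, using the identity
$$\zeta^{2}(1+\zeta)=\zeta^{2}+\zeta^{3}=-\tfrac{1+\sqrt5}{2},$$
so that $\zeta^{2}(1+\zeta)$ lies in the real quadratic subfield $\Q(\sqrt5)\subset F$. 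For a prime $q\equiv\pm2\pmod5$, $q$ is inert in $\Q(\sqrt5)$, so its reduction lies in $\mathbb{F}_{q^{2}}^{\ast}$, a subgroup of $\mathbb{F}_{q^{4}}^{\ast}$ of order $q^{2}-1$ prime to $5$; since the fifth powers form the unique index-$5$ subgroup, $\mathbb{F}_{q^{2}}^{\ast}\subseteq(\mathbb{F}_{q^{4}}^{\ast})^{5}$ and $\zeta^{2}(1+\zeta)$ is automatically a fifth power modulo every $q_j$. It is a fifth power modulo every $p_i$ as well (both $\zeta$ and $1+\zeta$ are, exactly as in Theorem \ref{-7mod25}). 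By part (b) of the lemma, $\zeta^{2}(1+\zeta)\in N_{K/F}(K^{\ast})$, so $q^{\ast}=1$ and $t=d+q^{\ast}-3=r+s-1$.

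Next I would compute $s_1$ in the strongly ambiguous case. Choosing genus-field generators as in Proposition \ref{generators}, the $x_i$ are the rational integers $p_1,\dots,p_r$ together with the products $q_1q_j^{h_j}$, all $\equiv\pm1,\pm7\pmod{\lambda^{5}}$, and the matrix $C_1$ of the preceding theorem expressing $s_1=\mathrm{rank}\,C_1$ has size $(r+s-1)\times(r+s+1)$. The columns indexed by the finite primes $(\pi_j)$ are tame symbols $\big(\tfrac{x_i,n}{(\pi_j)}\big)$ and vanish by the computation of \cite{Serre}[Ch.~14] as in Theorem \ref{-7mod25}. The only genuinely new column is the one at $(\lambda)$; it also vanishes, because each $x_i$ is a \emph{rational} integer $\equiv\pm1,\pm7\pmod{25}$, and such integers lie in $\mu_4\cdot(1+25\Z_5)=(\Q_5^{\ast})^{5}$, hence are fifth powers already in $\Q_5\subset F_{\lambda}$, so $F_{\lambda}(x_i^{1/5})/F_{\lambda}$ is trivial and $\big(\tfrac{x_i,\lambda}{(\lambda)}\big)=1$. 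Thus $C_1=0$, $s_1=0$, the $\lambda^{2}$-rank equals $t-s_1=r+s-1$, and $2t-s_1\le\mathrm{rank}\,S_K\le4t-3s_1$ yields $2r+2s-2\le\mathrm{rank}\,S_K\le4r+4s-4$.

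For the non-strongly-ambiguous case one has $q=0<q^{\ast}=1$; as only the single line spanned by $\zeta^{2}(1+\zeta)$ (and neither $\zeta$ nor $1+\zeta$ separately) consists of norms, the exceptional analysis of the genus-field subsection produces exactly one extra generator $\mathfrak B$, so $S_K[\lambda]=S_{K,s}\times(\Z/5)$ and $\mathrm{rank}\,S_{K,s}=t-1=r+s-2$. Now $C_1$ acquires one extra column $\big(\tfrac{x_i,\pi}{(\pi)}\big)$ with $\pi=N_{K/F}(\mathfrak B)$, all other columns still vanishing, whence $s_1=\mathrm{rank}\,C_1\le1$; then the $\lambda^{2}$-rank $=t-s_1\ge r+s-2$, and $\mathrm{rank}\,S_K\ge2t-s_1\ge2r+2s-3$ together with $\mathrm{rank}\,S_K\ge t=r+s-1$ gives the lower bound $\max(2r+2s-3,r+s-1)$, while $\mathrm{rank}\,S_K\le4t-3s_1\le4r+4s-4$ gives the upper bound. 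I expect the main obstacle to be proving $q^{\ast}=1$ \emph{uniformly} over all admissible $q_j$: the identity $\zeta^{2}(1+\zeta)=-\tfrac{1+\sqrt5}{2}$, which reduces the fifth-power question to the quadratic subfield, is exactly what makes this uniform, and a secondary delicate point is the vanishing of the $(\lambda)$-column, which uses that the $x_i$ are rational rather than merely $\equiv\pm1,\pm7\pmod{\lambda^{5}}$.
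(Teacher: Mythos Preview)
Your proof is correct and follows essentially the same architecture as the paper's: establish $d=r+s+1$, show $q^{\ast}=1$ so $t=r+s-1$, prove $C_1=0$ so $s_1=0$, and read off the bounds. Two of your sub-arguments are sharper than the paper's. For $q^{\ast}=1$ the paper appeals somewhat elliptically to ``looking at the fields $K_i,L_j$'', whereas your identity $\zeta^{2}(1+\zeta)=-\tfrac{1+\sqrt5}{2}\in\Q(\sqrt5)$ reduces the fifth-power check modulo $q_j$ to the index-prime-to-$5$ subgroup $\mathbb F_{q_j^{2}}^{\ast}\subset\mathbb F_{q_j^{4}}^{\ast}$, giving a clean uniform proof. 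For the $(\lambda)$-column, the paper cites an exercise in Cassels--Fr\"ohlich, while you observe directly that a rational integer $\equiv\pm1,\pm7\pmod{25}$ lies in $\mu_4\cdot(1+25\Z_5)=(\Z_5^{\ast})^{5}$, so $x_i$ is already a fifth power in $F_\lambda$ and the symbol is trivially $1$; this is both more transparent and explains \emph{why} the genus-field generators were forced to be rational. In the non-strongly-ambiguous case your analysis matches the paper's (one extra column, all previous columns still zero, hence $s_1\le1$), and you correctly make explicit what the paper leaves implicit, namely that the bound $\mathrm{rank}\,C_1\le1$ comes from the vanishing of the first $r+s+1$ columns rather than from the shape of the matrix alone.
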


\begin{proof}
Firstly we notice that $\lambda$ ramifies in $K/F$. Since $N(q) \not\equiv 1 \pmod{25}$, $\zeta \notin N_{K/F}(K^{\ast})$. Looking at the fields $K_{i}=F(p_{i}^{\frac{1}{5}})$ and $L_{1}=F(q_{1}^{\frac{1}{5}})$, $L_{j}=F((q_{1}q_{j}^{h_{j}})^{\frac{1}{5}}$, where $1\leq h_{j} \leq 4$ are chosen such that $q_{1}q_{j}^{h_{j}} \equiv \pm 1,\pm 7 \pmod{25}$, $j \neq 1$, one can easily see that  some $\zeta^{i}(1+\zeta)^{j}$ is fifth power modulo $p_{i}$ for all $i=1,\cdots,r$ and $q_{j}$ for $1 \leq j \leq s$. Thus $q^{\ast}=q=1$ and $t=d-3+q^{\ast}=r+s-1$.\\
To compute $s_{1}$, let $x_{i}=p_{i}$ where $1 \leq i \leq r$ and $y_{j-1} = q_{1}q_{j}^{h_{j}}$ where $2 \leq j \leq s$.
Using \cite{Serre}[Chapter 14, Section 3]  one can easily check that $\Big(\frac{x_i,n}{p_{j}}\Big) = 1$ for
$1 \leq i,j \leq r$, $\Big(\frac{x_i,n}{q_{j}}\Big) = 1$ for $1 \leq i \leq r$, $1\leq j \leq s$,
$\Big(\frac{y_i,n}{p_{j}}\Big) = 1$ for $1 \leq i \leq s-1$, $1\leq j \leq r$ and $\Big(\frac{y_i,n}{q_{j}}\Big) = 1$ for
$1 \leq i \leq s-1$, $1\leq j \leq s$.
That is, the $(r +s-1) \times r+s$ sub matrix of $C_{1}$ is zero matrix. Since $x_{i},y_{i} \equiv \pm 7 \pmod{25}$,
using \cite{Cassels}[Exercise 2.12,pg353-354] one can easily check that
$\Big(\frac{x_{i}, \lambda}{\lambda}\Big) \Big(\frac{y_{i}, \lambda}{\lambda}\Big)= 1$. Therefore, $s_{1}=0$.\\
So, we see that the $\lambda^2$-rank of $S_{K}$ is $t-s_{1}=r+s-1$. Since $2t-s_{1} \leq \mathrm{rank }S_{K} \leq 4t-3s_{1}$, we obtain that $2r+2s-2 \leq \mathrm{rank } S_{K} \leq 4r+4s-4$.\\
The second part of the statement follows from the fact that $q=0$,
as, then the matrix $C_{1}$ is of size $(r+s-1) \times (r+s+2)$,
which can have rank at most $1$.
\end{proof}

\begin{thm}\label{is-7mod25}
Let $p_{i} \equiv \pm 7 \pmod{25}$ for $1 \leq i \leq r$ be primes
and let $q_{j}$ be primes such
 that $q_{j} \equiv \pm 2 \pmod{5}$ but $q_{j} \not\equiv \pm7 \pmod{25}$ for
 $1 \leq j \leq s$ with $s \geq 2$. Let $n=p_{1}^{a_{1}} \cdots p_{r}^{a_{r}}q_{1}^{b_{1}}\cdots q_{s}^{b_{s}}$,
 where $1 \leq a_{i},b_{j} \leq 4$ for $1 \leq i \leq r$ and $1 \leq j \leq s$. Let $n \equiv \pm 1, \pm 7 \pmod{25}$.
  Let $F=\Q(\zeta_{5})$ and $K=F(n^{\frac{1}{5}})$. Assume that all ambiguous ideal classes of $K/F$ are strongly ambiguous.
  Then, the $\lambda^2$-rank of $S_{K}$ is $r+s-2$ and $2r+2s-4 \leq \mathrm{rank } S_{K} \leq 4r+4s-8$.\\
If there are ambiguous ideal classes which are not strongly
ambiguous, then $s_{1} \leq 1$, $\lambda^{2}$-rank of $S_{K}$ is
greater than or equal to $r+s-3$ and $\max (2r+2s-5,r+s-2) \leq
\mathrm{rank } S_{K} \leq 4r+4s-8$.
\end{thm}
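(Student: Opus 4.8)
The plan is to run the argument of Theorem~\ref{not-7mod25} with one structural change: the hypothesis $n \equiv \pm 1, \pm 7 \pmod{25}$ now forces $(\lambda)$ to be \emph{unramified} in $K/F$, which lowers the number $d$ of ramified primes, and hence the ambiguous rank $t$, by one. First I would unwind Hasse's formula $t = d + q^{\ast} - (r+1+o)$ with $r=o=1$. Each $p_i$ and each $q_j$ is $\equiv \pm 2 \pmod 5$, hence inert in $F$, so they contribute $r+s$ distinct primes ramifying in $K/F$; by part~(c) of Lemma~5.1, $n \equiv \pm 1, \pm 7 \pmod{25}$ (equivalently $n \equiv \pm 1,\pm 7 \pmod{\lambda^{5}}$, since $25 \in (\lambda^{5})$) shows that $(\lambda)$ is unramified, so $d = r+s$. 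Since $N_{F/\Q}((q_{j})) = q_{j}^{4} \not\equiv 1 \pmod{25}$, part~(a) of the lemma gives $\zeta \notin N_{K/F}(K^{\ast})$, so the space of unit norms is a proper subspace of $E_{F}/E_{F}^{5} \cong \mathbb{F}_{5}^{2}$ and $q^{\ast} \le 1$.

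To pin down $q^{\ast}=1$ I would compare directly with Theorem~\ref{not-7mod25}, which treats the same prime types. The local fifth-power conditions ``$u$ is a fifth power modulo $(p_i)$'' and ``modulo $(q_j)$'' depend only on the primes $p_i,q_j$, not on the exponents nor on $n \bmod 25$; and because $(\lambda)$ is now unramified, units are automatically local norms at $\lambda$ and impose no further condition. Hence the space $V$ of unit norms here equals $\{u : u \text{ is a fifth power modulo every } q_j\}$, which \emph{contains} the one-dimensional unit-norm space of Theorem~\ref{not-7mod25} (that space is $V$ intersected with the extra condition at $\lambda$). Thus $\dim V \ge 1$, and with $\zeta \notin N_{K/F}(K^{\ast})$ forcing $\dim V \le 1$, we get $q^{\ast}=1$ and $t = (r+s)+1-3 = r+s-2$; the $\lambda^{2}$-rank is then $t-s_{1}$.

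Assuming next that all ambiguous classes are strongly ambiguous, so $q=q^{\ast}=1$, I would construct the genus field via Proposition~\ref{generators}: take the $p_i$ together with the products $q_{1}q_{j}^{h_{j}} \equiv \pm 1,\pm 7 \pmod{25}$ supplied by part~(i), noting that the relation $\prod_{j} q_{j}^{b_{j}} \equiv \pm 1,\pm 7 \pmod{25}$ (which follows from $n \equiv \pm 1,\pm 7$ and $\prod_{i} p_{i}^{a_{i}} \equiv \pm 1,\pm 7 \pmod{25}$) renders one such product redundant, leaving exactly $t=r+s-2$ generators $x_{1},\dots,x_{t}$. The matrix $C_{1}$ of fifth-power Hilbert symbols then has size $(r+s-2)\times(r+s)$, \emph{with no column at} $(\lambda)$. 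As in Theorem~\ref{not-7mod25}, each entry $\Big(\frac{x_{i},n}{(\pi_{j})}\Big)$ vanishes by the tame-symbol computation of \cite{Serre}[Chapter~14, Section~3], so $C_{1}=0$ and $s_{1}={\rm rank}\,C_{1}=0$. Hence the $\lambda^{2}$-rank is $t-s_{1}=r+s-2$, and feeding $t=r+s-2$, $s_{1}=0$ into $2t-s_{1}\le {\rm rank}\,S_{K}\le 4t-3s_{1}$ (Proposition~\ref{prop1} and its corollaries) yields $2r+2s-4 \le {\rm rank}\,S_{K} \le 4r+4s-8$. For the second part, ambiguous classes that are not strongly ambiguous force $q<q^{\ast}$, hence $q=0$; since $q^{\ast}=1\neq 2$, exactly one extra ideal $\mathfrak{B}$ (and no $\mathfrak{B}'$) is adjoined, so $C_{1}$ becomes $(r+s-2)\times(r+s+1)$. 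Its first $r+s$ columns still vanish, so $s_{1}={\rm rank}\,C_{1}\le 1$, giving $\lambda^{2}$-rank $=t-s_{1}\ge r+s-3$; then $2t-s_{1}\ge 2t-1=2r+2s-5$ together with ${\rm rank}\,S_{K}\ge t=r+s-2$ (as $\dim_{\mathbb{F}_{5}}S_{K}/\lambda S_{K}\le \dim_{\mathbb{F}_{5}}S_{K}/5S_{K}$) gives the lower bound $\max(2r+2s-5,\,r+s-2)$, while $s_{1}\ge 0$ gives ${\rm rank}\,S_{K}\le 4t=4r+4s-8$.

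The hardest point is the precise value $q^{\ast}=1$: although the reduction above defers it to Theorem~\ref{not-7mod25}, the genuine content there is that a single unit combination (the examples suggest $\zeta^{2}(1+\zeta)$) is simultaneously a fifth power modulo every admissible $q_j$, which rests on a power-residue/reciprocity computation and is what keeps the $s$ local conditions from being independent. The remaining technical steps are the verification that every tame symbol $\Big(\frac{x_{i},n}{(\pi_{j})}\Big)$ vanishes so that $C_{1}\equiv 0$, and the bookkeeping of the one redundant genus generator produced by $n\equiv\pm 1,\pm 7\pmod{25}$. By contrast, the local analysis at $\lambda$ is strictly easier than in Theorem~\ref{not-7mod25}, precisely because $(\lambda)$ is unramified and supplies no column to $C_{1}$.
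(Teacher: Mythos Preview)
Your proposal is correct and follows essentially the same approach as the paper's proof: compute $t=r+s-2$ via Hasse's formula (unramified $\lambda$, $q^{\ast}=1$), verify $C_{1}=0$ by the tame-symbol computation to get $s_{1}=0$, and read off the bounds from Proposition~\ref{prop1}. Your treatment is in places more explicit than the paper's---notably the explanation of why one genus generator becomes redundant via the subgroup $\{\pm 1,\pm 7\}\subset(\Z/25\Z)^{\ast}$, and the justification that only the single extra column of $C_{1}$ can be nonzero in the non-strongly-ambiguous case---while your containment argument for $q^{\ast}=1$ is slightly roundabout (by Lemma~5.1(b) and the product formula the unit-norm space depends only on the primes $p_i,q_j$ and not on $n\bmod 25$, so it actually equals, not merely contains, that of Theorem~\ref{not-7mod25}).
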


\begin{proof}
Firstly, we notice that $\lambda$ does not ramify in $K/F$. Since $N(q) \not\equiv 1 \pmod{25}$,
$\zeta \notin N_{K/F}(K^{\ast})$.
Looking at the fields $K_{i}=F(p_{i}^{\frac{1}{5}})$ and $L_{1}=F(q_{1}^{\frac{1}{5}})$,
$L_{j}=F((q_{1}q_{j}^{h_{j}})^{\frac{1}{5}}$, where $1\leq h_{j} \leq 4$ are chosen such that
$q_{1}q_{j}^{h_{j}} \equiv \pm 1,\pm 7 \pmod{25}$, $j \neq 1$, one can easily see that
some $\zeta^{u}(1+\zeta)^{v}$ is a fifth power modulo $p_{i}$ for all $i=1,\cdots,r$ and a fifth power modulo $q_{j}$ for $1 \leq j \leq s$. Thus $q^{\ast}=q=1$ and $t=d-3+q^{\ast}=r+s-2$.\\
To compute $s_{1}$, let $x_{i}=p_{i}$ where $1 \leq i \leq r$ and
$y_{j-1} = q_{1}q_{j}^{h_{j}}$ where $2 \leq j \leq s-1$.
Using \cite{Serre}[Chapter 14, Section 3]  one can easily check that
$\Big(\frac{x_i,n}{p_{j}}\Big) = 1$ for $1 \leq i,j \leq r$, $\Big(\frac{x_i,n}{q_{j}}\Big) = 1$ for $1 \leq i \leq r$,
$1\leq j \leq s$, $\Big(\frac{y_i,n}{p_{j}}\Big) = 1$ for $1 \leq i \leq s-2$, $1\leq j \leq r$ and
$\Big(\frac{y_i,n}{q_{j}}\Big) = 1$ for $1 \leq i \leq s-2$, $1\leq j \leq s$.
That is, the $(r +s-2) \times (r+s)$ matrix $C_{1}$ is the zero matrix. So $s_{1}=0$.\\
We obtain that the $\lambda^2$-rank of $S_{K}$ is $t-s_{1}=r+s-2$. Since $2t-s_{1} \leq \mathrm{rank }S_{K} \leq 4t-3s_{1}$, we get $2r+2s-4 \leq \mathrm{rank } S_{K} \leq 4r+4s-8$.\\
the second part of the statement follows from the fact that $q=0$,
and then the matrix $C_{1}$ is of size $(r+s-2) \times (r+s+1)$,
which can have rank at most $1$.
\end{proof}

\noindent The following table provided by SAGE (which is valid under
GRH) gives the computation of various class groups. We have $F=
\Q(\zeta_{5})$, $K= F(n^{\frac{1}{5}})$. We denote by $n(f)$ the
number of distinct prime divisors of $n$ in $F$ and $S_{K}$ the
$5$-class group of $K$ respectively.

\begin{table}[ht!]
\caption{\bf{Number Fields and their Class Groups}}
\centering
\begin{tabular}{ c  c   c  }
\hline 
\toprule
  n & n(f) & $S_{K}$ \\ \midrule
2,3,4,7,8,9,16,17,23,27 & 1 & 1 \\
43,47,49,53,73,81,97 & 1 & 1 \\
13,37,67,83 & 1& 1 \\ \midrule

18, 24, 26,51,68,74 & 2 & 1 \\

6,12,14,21,28,36,39,48,52 & 2 & $C_{5}$ \\
54,56,69,72,91,92,94,98 & 2 & $C_{5}$ \\
34,46,63,86 & 2 & $C_{5}$ \\
301 & 2 & $C_{5}$ \\ \midrule

19,29,59,79,89 & 2 & $C_{5} \times C_{5}$ \\
57,76 & 3 & $C_{5} \times C_{5}$ \\
38,58,87,133 & 3 & $C_{5} \times C_{5} \times C_{5}$\\ \midrule

42,78,84 & 3 & $ C_{5} \times C_{5} \times C_{5} \times C_{5} \times C_{5}$\\ \midrule

11, 41,61,71 & 4 & $C_{5} \times C_{5}$ \\
31 & 4 & $ C_{5} \times C_{5} \times C_{5} \times C_{5} \times C_{5}$\\
82,93,99 & 5 & $C_{5} \times C_{5}$ \\
22,44,62,77 & 5 & $C_{5} \times C_{5} \times C_{5}$ \\
33,88 & 5 & $ C_{5} \times C_{5} \times C_{5} \times C_{5}$\\
66 & 6 & $ C_{5} \times C_{5} \times C_{5} \times C_{5} \times C_{5}$\\ \midrule

5,25 & 1 & 1 \\
10,15,20,45,75,80 & 2 & 1 \\
40,50,65,85 & 2 & 1 \\
35 & 2 & $C_{5}$ \\
30,60,70,90 & 3 & $C_{5}$ \\
55,95 & 3 & $C_{5} \times C_{5}$\\

\bottomrule
\end{tabular}
\label{Tab1}
\end{table}

\newpage

\begin{example}
In this example, we look at the more complicated situation when
$\pi_{i}$'s are {\bf not} of the form $a \pmod{ 5 \Z[\zeta]}$ for
some nonzero integer $a$. Let $F=\Q(\zeta)$, and $x=11$. So
$K=F(11^{\frac{1}{5}})$. We have
$$11=u\pi_{1}\pi_{2}\pi_{3}\pi_{4},$$ where $u$ is a unit, $\pi_{1} =
2+ \zeta$, $\pi_{2} = 1+ \zeta - \zeta^2$, $\pi_{3} = 1+ \zeta + 2
\zeta^2$ and $\pi_{4}=1-\zeta+\zeta^3$.\\ We saw that in this
situation (example 5.4) $t=2$ and $q^{\ast}=0$. \\Next we note that,
$x_{1}= \pi_{1}^{2}\pi_{2}^{3}\pi_{3}^{3}\pi_{4}^{2} \equiv 1\pmod{
\lambda^5}$ and $x_{2}= -\pi_{1}^{4}\pi_{2}\pi_{3}\pi_{4}^{4} \equiv
1\pmod{ \lambda^5}$. \\Then $M_{1} = K(x_{1}^{\frac{1}{5}},
x_{2}^{\frac{1}{5}})$ is the genus field. \\
Next we note that
$\Big(\frac{x_1,\lambda}{(\lambda)}\Big)=\Big(\frac{x_2,\lambda}{(\lambda)}\Big)
=\zeta^{4}$( \cite{Cassels}[Exercise 2.12, pg.353-354]). \\
We also see that, $\Big(\frac{x_1,x}{(\pi_{1})}\Big) \neq 1$ (it
equals $4 \in F_{11}$)(see \cite{Serre}[Chapter 14, Section 3]). One
then easily see that the $2\times 5$ matrix of theorem 5.7 has {\rm
rank} $2$. Thus $s_{1}=2$ and $S_{K}$ is an elementary abelian 5
group of {\rm rank} 2, that is $S_{K} = C_{5} \times C_{5}$. This is
also confirmed by a SAGE program - see the table.

\end{example}
\vskip 5mm

\noindent We observe from table 1 that, if $p \equiv -1 \pmod{5}$,
then rank of class group is at least 2. That motivated us to prove
the following result (see also table 2 below). The table is obtained
using SAGE under the assumption GRH, with
$K=\Q(\zeta_{5})(p^{\frac{1}{5}})$, where $p \equiv -1 \pmod{5}$ and
$R=\Z[\zeta_{5}]$. The third column describes the $R$-module
structure of the $5$-class group $S_{K}$.

\begin{table}[ht!]
\caption{\bf{Structure of $5$ Class Group of $K$}} \centering
\begin{tabular}{ c  c   c  }
\hline 
\toprule $p$ & $S_{K}$ & Structure of $S_{K}$ as $R$ module \\
\midrule
19 & $C_{5} \times C_{5}$ & $R/(\lambda^2)$ \\
29 & $C_{5} \times C_{5}$ &$ R/(\lambda^2) $\\
59 & $C_{5} \times C_{5} $& $R/(\lambda^2) $\\
79 & $C_{5} \times C_{5} $& $R/(\lambda^2) $\\
89 & $C_{5} \times C_{5} $& $R/(\lambda^2) $\\
109 & $C_{5} \times C_{5} $& $R/(\lambda^2) $\\
139 & $C_{5} \times C_{5} \times C_{5}$& $R/(\lambda^3) $\\
149 & $C_{5} \times C_{5} $& $R/(\lambda^2) $\\
179 & $C_{5} \times C_{5} $& $R/(\lambda^2) $\\
199 & $C_{5} \times C_{5} $& $R/(\lambda^2) $\\
229 & $C_{5} \times C_{5} $& $R/(\lambda^2)$ \\
239 & $C_{5} \times C_{5} $& $R/(\lambda^2) $\\
269 & $C_{5} \times C_{5} $& $R/(\lambda^2)$ \\
349 & $C_{5} \times C_{5}$ & $R/(\lambda^2)$ \\
\bottomrule
\end{tabular}
\label{Tab2}
\end{table}

\newpage

\begin{thm}\label{-1mod5}
Let $p$ be a prime congruent to $-1 \pmod{5}$. Let $F=\Q(\zeta_{5})$
and $K=F(p^{\frac{1}{5}})$. Assume that all ambiguous ideal classes
are strongly ambiguous. Then $25$ divides the class number of $K$.
More precisely, the $\lambda^2$-rank of $S_{K}$ is $1$ and we have,
$ 2 \leq \mathrm{rank } S_{K} \leq 4$.
\end{thm}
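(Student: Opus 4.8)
The plan is to read off $\mathrm{rank}\,S_K$ from Proposition \ref{prop1} and its corollaries once the two invariants $t$ and $s_1$ are known, so the whole proof reduces to establishing $t=1$ and $s_1=0$. First I would determine how $p$ splits in $F=\mathbb{Q}(\zeta_5)$. Since $p\equiv -1\pmod 5$, the Frobenius at $p$ in $\mathrm{Gal}(F/\mathbb{Q})\cong(\mathbb{Z}/5)^{\ast}$ is $\zeta\mapsto\zeta^{-1}$, i.e. complex conjugation; it has order $2$, so $p$ has residue degree $2$ and $(p)=(\pi_1)(\pi_2)$ with $N_{F/\mathbb{Q}}(\pi_i)=p^2$. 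Thus $g=2$ and both $\pi_i$ ramify in $K/F$. Whether $(\lambda)$ ramifies and what $q^{\ast}$ equals depend on $p$ modulo $25$, so I would split into the case $p\equiv -1\pmod{25}$ (then $(\lambda)$ is unramified and $d=2$) and the case $p\not\equiv -1\pmod{25}$ (then $x\not\equiv\pm1,\pm7\pmod{\lambda^5}$, so $(\lambda)$ ramifies and $d=3$), using Lemma 5.1(c).

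The crux is computing $q^{\ast}$, which by Lemma 5.1(a),(b) equals $2-\mathrm{rank}\,\phi$, where $\phi:E_F/E_F^5\to\prod_{k=1,2}\mathbb{F}_{p^2}^{\ast}/(\mathbb{F}_{p^2}^{\ast})^5$ sends a unit to the tuple of its quintic residue symbols modulo the $\pi_k$; recall $E_F/E_F^5$ is two–dimensional with basis $\zeta,\,1+\zeta$. The decisive input is the quintic residue symbol of the cyclotomic unit $1+\zeta$ modulo $\pi_i$: writing $\omega$ for the image of $\zeta$ in the residue field $\mathbb{F}_{p^2}$, the Frobenius relation $\omega^p=\omega^{-1}$ gives $(1+\omega)^{p-1}=(1+\omega^{-1})/(1+\omega)=\omega^{-1}$, hence $(1+\omega)^{(p^2-1)/5}=\omega^{-(p+1)/5}$, so the symbol of $1+\zeta$ is $\zeta^{-(p+1)/5}$; the analogous easier computation shows the symbol of $\zeta$ is $\zeta^{(p^2-1)/5}$. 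Both exponents are independent of $i$, so $\phi(\zeta)$ and $\phi(1+\zeta)$ are scalar multiples of $(1,1)\in\mathbb{F}_5^2$ and $\mathrm{rank}\,\phi\le 1$. In the first case both symbols vanish, so $\mathrm{rank}\,\phi=0$ and $q^{\ast}=2$, giving $t=d-3+q^{\ast}=2-3+2=1$; in the second case the symbol of $\zeta$ is nonzero, so $\mathrm{rank}\,\phi=1$ and $q^{\ast}=1$, giving $t=3-3+1=1$. Either way $t=1$.

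Next I would show $s_1=0$ via the theorem expressing $s_1=\mathrm{rank}\,C_1$. Under the standing hypothesis that every ambiguous class is strongly ambiguous we have $q=q^{\ast}$, so no $\mathfrak{B}$– or $\mathfrak{B}'$–columns occur and $C_1$ is the $1\times u$ matrix (with $u=g$ or $g+1$) built from $\Big(\frac{x_1,p}{(\pi_j)}\Big)$ together with $\Big(\frac{x_1,\lambda}{(\lambda)}\Big)$ when $(\lambda)$ ramifies, where $x_1$ is the single genus–field generator. Because the $\pi_i$ are not congruent to rational integers modulo $5\mathbb{Z}[\zeta]$, Proposition \ref{generators} does not apply verbatim; instead I would construct $x_1$ by hand as a product $\pi_1^{a}\pi_2^{b}$ times a unit chosen so that $x_1\equiv\pm1,\pm7\pmod{\lambda^5}$ (whence $K(x_1^{1/5})/K$ is unramified), exactly as in the worked example $K=F(11^{1/5})$. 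The tame–symbol formula then reduces each $\Big(\frac{x_1,p}{(\pi_j)}\Big)$ to the quintic residue symbol of a $\pi_j$–unit that is a fifth power by the norm computations already made — the analogue of the vanishing invoked from \cite{Serre}[Chapter 14, Section 3] in Theorems \ref{-7mod25}, \ref{not-7mod25}, \ref{is-7mod25} — while $\Big(\frac{x_1,\lambda}{(\lambda)}\Big)=1$ follows from \cite{Cassels}[Exercise 2.12] since $x_1\equiv\pm1,\pm7\pmod{\lambda^5}$. Hence $C_1=0$ and $s_1=0$.

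Finally I would assemble the pieces: the $\lambda^2$–rank equals $t-s_1=1$, and the bounds $2t-s_1\le\mathrm{rank}\,S_K\le 4t-3s_1$ coming from Proposition \ref{prop1} and its corollaries give $2\le\mathrm{rank}\,S_K\le 4$; since a finite $5$–group of rank $\ge 2$ has order $\ge 25$, this yields $25\mid h_K$. I expect the main obstacle to be the explicit construction of the genus–field generator $x_1$ and the attendant verification that $C_1$ vanishes, since the degree–$2$ primes $\pi_i$ block a direct appeal to Proposition \ref{generators} and force the hands–on approach of the $p=11$ example; the determination of $q^{\ast}$, by contrast, is clean once one exploits that Frobenius acts as complex conjugation on the fifth roots of unity.
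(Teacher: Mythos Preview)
Your overall strategy coincides with the paper's: compute $t$ via Hasse's formula and Lemma~5.1, compute $s_1$ via the matrix $C_1$, then invoke the bounds $2t-s_1\le\mathrm{rank}\,S_K\le 4t-3s_1$. Your computation of $t=1$ is correct and in fact cleaner than the paper's: rather than writing down an explicit factorisation $p=\pi_1\pi_2$ with $\pi_i=a\zeta^3+a\zeta^2+b$ (resp.\ $c$) coming from the representation $p=a^2+ab-b^2$, you exploit directly that Frobenius acts as $\omega\mapsto\omega^{-1}$ on the residue field, which yields the quintic symbols of $\zeta$ and $1+\zeta$ uniformly at both primes and forces $\mathrm{rank}\,\phi\le 1$.

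However, your sketch of $s_1=0$ has a genuine gap. After applying the tame symbol, the entries of $C_1$ involve the quintic residue symbol of $\pi_2$ modulo $\pi_1$ (and vice versa), not merely symbols of the units $\zeta,\,1+\zeta$; the phrase ``a fifth power by the norm computations already made'' does not cover this. The paper's key device is precisely the explicit choice $\pi_i\in\mathbb{Z}[\zeta^2+\zeta^3]$, i.e.\ $\pi_i$ real: then $\bar\pi_2\pmod{\pi_1}$ is fixed by the residue Frobenius (which is the reduction of complex conjugation), hence lies in $\mathbb{F}_p^{\ast}$, and since $(p-1)\mid(p^2-1)/5$ one gets $\bar\pi_2^{(p^2-1)/5}=1$. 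Without this real choice the symbol need not vanish, and nothing in your $q^{\ast}$ computation supplies it. Separately, your claim that $\big(\tfrac{x_1,\lambda}{(\lambda)}\big)=1$ follows from $x_1\equiv\pm1,\pm7\pmod{\lambda^5}$ alone is incorrect: that congruence only makes $F_\lambda(x_1^{1/5})/F_\lambda$ unramified, and the symbol with the uniformiser $\lambda$ is then the (generally nontrivial) Frobenius --- indeed the paper's own worked example $K=F(11^{1/5})$ has $x_1\equiv1\pmod{\lambda^5}$ yet $\big(\tfrac{x_1,\lambda}{(\lambda)}\big)=\zeta^4$. The paper instead handles this column via the product formula, reducing it to the already-computed symbols at $\pi_1,\pi_2$.
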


\begin{proof}
It is known that any prime of the form $p \equiv -1 \pmod{5}$ can be written as
$$p = a^2+ab-b^2$$
with $a,b \in \Z$, non-zero with $(a,b)=1$. Note that this implies that $(a,p)=(b,p)=1$.\\
Let $c=a-b$, define
$$\pi_{1}= a \zeta^3 + a \zeta^2 +b \text{\quad     and  \quad   } \pi_{2}= a \zeta^3 + a \zeta^2 +c .$$
Now we observe the following two identities:
$$ a^2+bc = a^2+b(a-b) =p \text{\quad and \quad}a^2-ab-ac=a^2 -a(b+c) =0.$$
Thus,
\begin{align*}
\pi_{1}\pi_{2} &= (a \zeta^3 + a \zeta^2 +b)(a \zeta^3 + a \zeta^2 +c) \\
                        & =  (2a^2+bc) + a^2 (\zeta + \zeta^4) + (ab+ac) (\zeta^2+\zeta^3)\\
                        &= (a^2+bc)+(a^2-ab-ac)(1+\zeta+\zeta^4)\\
                        &= p.
\end{align*}
This gives us prime decomposition of $p$ in $F$. Now to compute the $\lambda^2$-rank of $S_{K}$, we compute $t$ and $s_{1}$.\\
If $p \equiv -1 \pmod{25}$, then $N(\pi_{i})=p^2 \equiv 1 \pmod{25}$ for $i=1,2$. So $\zeta \in N_{K/F}(K^{\ast})$.\\
If $p \not\equiv -1 \pmod{25}$, then $N(\pi_{i})=p^2 \not\equiv 1 \pmod{25}$ for $i=1,2$. So $\zeta \notin N_{K/F}(K^{\ast})$.\\
In both cases,
$$ -\zeta^2(1+\zeta) \equiv \frac{b}{a} \pmod{\pi_{1}} \text{ \quad and \quad } -\zeta^2(1+\zeta) \equiv \frac{c}{a} \pmod{\pi_{2}}. $$
Note that $\frac{b}{a}$ and $\frac{c}{a}$ are in $\mathbb{F}_{p}^{\ast}$. Since $5 \nmid p-1$, $x \mapsto x^5$ is an isomorphism of $\mathbb{F}_{p}^{\ast}$. Hence $\frac{b}{a}$ and $\frac{c}{a}$ are fifth power modulo $\pi_{1}$ and $\pi_{2}$ respectively. Thus in both cases we see that, $-\zeta^2(1+\zeta) \in N_{K/F}(K^{\ast})$. \\
Combining these facts, we see that in both cases, $t=g-1=1$.\\
In the case, $p \not\equiv -1 \pmod{25}$, we have $q^{\ast}=q=1$. In the case $p \equiv -1 \pmod{25}$, we have $q^{\ast}=q=2$ and $\lambda$ does not ramify in $K/F$. \\
Let $M_{1}$ denote the genus field of $K/F$. It is of the form
$M_{1}=K(x_{1}^{\frac{1}{5}})$.
Since $M_{1}$ is unramified over $K$, only the primes that ramify in $K$ can divide $x_{1}$.\\
Suppose $p \equiv -1 \pmod{25}$. Then, only $\pi_{1}$ and $\pi_{2}$
ramify in $K$. So $x_{1}$ is of the form $x_{1}=
\pi_{1}^{\alpha_{1}} \pi_{2}^{\alpha_{2}}$. To compute $s_{1}$, we
need to compute $\Big(\frac{x_1,p}{\pi_{1}}\Big)$,
$\Big(\frac{x_1,p}{\pi_{2}}\Big)$. Let $c_{1} = (-1)^{\alpha_{1}}
\frac{x_{1}}{p^{\alpha_{1}}} = (-1)^{\alpha_{1}}
\pi_{2}^{\alpha_{2}-\alpha_{1}}$. Since $\bar{\pi_{2}} = c-b
\pmod{\pi_1}$, with $c-b \in \mathbb{F}_{p}^{\ast}$, we see on using
\cite{Serre}[Chapter 14, Section 3] that,
$\Big(\frac{x_1,p}{\pi_{1}}\Big)=(\bar{c_{1}})^{(p^2-1)/5}=1$. This
was because the residue field is $\mathbb{F}_{p^2}$ and $(p^2-1)/5$
is a multiple of $p-1$. Similarly we find that,
$\Big(\frac{x_1,p}{\pi_{2}}\Big)=1$.
So in this case the $(1 \times 2)$ matrix $C_{1}$ is the zero matrix. Hence $s_{1}=0$.\\
Now suppose the $p\not\equiv -1 \pmod{25}$, then $\lambda$ also
ramifies in $K$. So in this case, $x_{1}$ is of the form, $x_{1}=
\lambda^{a}\pi_{1}^{\alpha_{1}}\pi_{2}^{\alpha_{2}}$. To compute
$s_{1}$, we need to compute $\Big(\frac{x_1,p}{\pi_{1}}\Big)$,
$\Big(\frac{x_1,p}{\pi_{2}}\Big)$ and
$\Big(\frac{x_1,\lambda}{(\lambda)}\Big)$. Let
$c_{1}=(-1)^{\alpha_{1}} \frac{x_{1}}{p^{\alpha_{1}}} =
(-1)^{\alpha_{1}} \pi_{2}^{\alpha_{2}-\alpha_{1}} \lambda^{a}$.
Since $\bar{\pi_{2}} = c-b \pmod{\pi_1}$, with $c-b \in
\mathbb{F}_{p}^{\ast}$ and $(\bar{\lambda})^{4}=5 \pmod{\pi_{1}}$,
we see that $(\bar{c_{1}})^{4(p-1)} = 1$. That is
$\Big(\frac{x_1,p}{\pi_{1}}\Big)=(\bar{c_{1}})^{(p^2-1)/5}=\pm 1$.
Since it is a fifth root of unity in $\mathbb{F}_{p}[\zeta]^{\ast}$,
it can not be $-1$. Thus, $\Big(\frac{x_1,p}{\pi_{1}}\Big)= 1$.
Similarly we see that, $\Big(\frac{x_1,p}{\pi_{2}}\Big)=1$. To
compute $\Big(\frac{x_1,\lambda}{(\lambda)}\Big)$, we compute
$\Big(\frac{x_1,\lambda}{(\pi_{1})}\Big)$ and
$\Big(\frac{x_1,\lambda}{(\pi_{2})}\Big)$ and use the product
formula. We compute $\Big(\frac{x_1,\lambda}{(\pi_{i})}\Big)$
similarly to get, $\Big(\frac{x_1,\lambda}{(\pi_{i})}\Big)=1$ for
$i=1,2$. Thus $\Big(\frac{x_1,\lambda}{(\lambda)}\Big)=1$. So in
this case the $1 \times 3$
matrix $C_{1}$ is the zero matrix. Hence $s_{1}=0$.\\
Thus $s_{1}=0$ in either case which means that the $\lambda^2$-rank
of $S_{K}$ is $t-s_{1}=1$. Lastly, we observe that,
$$2=2t-s_{1} \leq \mathrm{rank } S_{K} \leq 4t-3s_{1} =4,$$
that is 25 divides the class number of $K$.
\end{proof}

\noindent The following theorem is similar in flavor to that of
Theorem \ref{-1mod5} (see also table 3 below). The table is obtained
using SAGE under the assumption GRH, with
$K=\Q(\zeta_{5})(n^{\frac{1}{5}})$, where $n=pq$ with $p\equiv \pm 7
\pmod{25}$, $q \equiv -1 \pmod{5}$ and $R=\Z[\zeta_{5}]$. The third
column describes the $R$-module structure of the $5$-class group
$S_{K}$.
\begin{table}[ht!]
\caption{\bf{Structure of $5$-Class Group of $K$}} \centering
\begin{tabular}{ c  c   c  }
\hline 
\toprule $n=p \times q$ & $S_{K}$ & Structure of $S_{K}$ as $R$
module \\ \midrule
$7 \times 19$ & $C_{5} \times C_{5} \times C_{5}$ & $R/ (\lambda) \times R/(\lambda^2)$\\
$7 \times 29$ & $C_{5} \times C_{5} \times C_{5} \times C_{5}$ & $R/ (\lambda) \times R/(\lambda^3)$\\
$7 \times 59$ & $C_{5} \times C_{5} \times C_{5}$ & $R/ (\lambda) \times R/(\lambda^2)$\\
$7 \times 79$ & $C_{5} \times C_{5} \times C_{5} \times C_{5}$ & $R/ (\lambda) \times R/(\lambda^3)$\\
$7 \times 89$ & $C_{5} \times C_{5} \times C_{5}$ & $R/ (\lambda)
\times R/(\lambda^2)$\\ \midrule
$7 \times 149$ & $C_{5} \times C_{5} \times C_{5}$ & $R/ (\lambda) \times R/(\lambda^2)$\\
$43 \times 149$ & $C_{5} \times C_{5} \times C_{5} \times C_{5}$ & $R/ (\lambda) \times R/(\lambda^3)$\\
$107 \times 149$ & $C_{5} \times C_{5} \times C_{5}$ & $R/ (\lambda) \times R/(\lambda^2)$\\
$7 \times 199$ & $C_{5} \times C_{5} \times C_{5}$ & $R/ (\lambda) \times R/(\lambda^2)$\\
$43 \times 199$ & $C_{5} \times C_{5} \times C_{5}$ & $R/ (\lambda) \times R/(\lambda^2)$\\
$107 \times 199$ & $C_{5} \times C_{5} \times C_{5}$ & $R/ (\lambda) \times R/(\lambda^2)$\\
\bottomrule
\end{tabular}
\label{Tab3}
\end{table}

\newpage

\begin{thm}\label{2primes}
Let $p$ be a prime congruent to $\pm7 \pmod{25}$ and $q$ be a prime
congruent to $-1 \pmod{5}$. Let $F=\Q(\zeta_{5})$ and
$K=F((pq)^{\frac{1}{5}})$. Assume that all ambiguous ideal classes
are strongly ambiguous. Then, $125$ divides the class number of $K$.
More precisely, the $\lambda^2$-rank of $S_{K}$ is $1$, and $3 \leq
\mathrm{rank } S_{K} \leq 5$.
\end{thm}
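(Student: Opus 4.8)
The plan is to mirror the proofs of Theorems \ref{-7mod25} and \ref{-1mod5}: determine the factorization of $n=pq$ in $F=\Q(\zeta)$, read off $t$ from Hasse's formula as unwound in Proposition 5.2, compute $s_1$ as the rank of the Hilbert-symbol matrix $C_1$ (the theorem expressing $s_1=\mathrm{rank}\,C_1$ in terms of degree-$5$ Hilbert symbols), and then invoke the bounds $2t-s_1\le \mathrm{rank}\,S_K\le 4t-3s_1$ together with $\lambda^2\text{-rank}=t-s_1$. Because $p\equiv\pm7\pmod{25}$ forces $p\equiv\pm2\pmod5$, the prime $p$ has order $4$ modulo $5$ and is inert in $F$, so $(p)$ is prime with $N(p)=p^4$; while $q\equiv-1\pmod5$ has order $2$ and splits as $(q)=(\pi_1)(\pi_2)$ with $N(\pi_i)=q^2$, exactly as in Theorem \ref{-1mod5}. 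Hence $g=3$, and there are two regimes according to whether $q\equiv-1\pmod{25}$ (then $n$ is a fifth power modulo $25$, so $(\lambda)$ is unramified and $d=3$) or $q\not\equiv-1\pmod{25}$ (then $(\lambda)$ ramifies and $d=4$).

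To obtain $t$ I would feed these data into Lemma 5.1, which relates units-as-norms to fifth-power conditions. Since $p\equiv\pm7\pmod{25}$ gives $p^2\equiv-1$, hence $N(p)=p^4\equiv1\pmod{25}$, while $N(\pi_i)=q^2\equiv1\pmod{25}$ exactly when $q\equiv-1\pmod{25}$, the criterion $\zeta\in N_{K/F}(K^\ast)$ holds iff $q\equiv-1\pmod{25}$. In the unramified regime one checks, as in Theorems \ref{-1mod5} and \ref{-7mod25}, that both $\zeta$ and $-\zeta^2(1+\zeta)$ are fifth powers modulo every prime dividing $n$, so $q^\ast=2$ and $t=d-3+q^\ast=3-3+2=2$. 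In the ramified regime $\zeta$ is not a norm, while $-\zeta^2(1+\zeta)$ still is (it is a fifth power modulo $(p)$, since every unit is for $p\equiv\pm7\pmod{25}$, and modulo $\pi_i$ by the $\mathbb{F}_q^\ast$-argument of Theorem \ref{-1mod5}); this forces $q^\ast=1$ and $t=4-3+1=2$. Thus $t=2$ in every case.

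The substance is the equality $s_1=1$. Writing the genus field as $M_1=K(x_1^{1/5},x_2^{1/5})$ with $x_i\in F$ and $x_i\equiv\pm1,\pm7\pmod{\lambda^5}$, I would (following Proposition \ref{generators}) take one generator built from the inert prime, effectively $x_1=p$ (equivalently $q=n/p$, since $q^{1/5}=n^{1/5}p^{-1/5}\in K(p^{1/5})$), and a second generator built from the split primes: an admissible primary combination of $\pi_1,\pi_2$ (and, when $q^\ast=2$, possibly a unit), where independence from $x_1$ over $K$ forces $\pi_1,\pi_2$ to occur asymmetrically. Expressing the entries of $C_1$ as the Hilbert symbols $\left(\frac{x_i,n}{(\pi_j)}\right)$ over $(p),(\pi_1),(\pi_2)$ (and $(\lambda)$ in the ramified regime), and reducing them via the tame-symbol formula (\cite{Serre}[Chapter 14, Section 3]) and the explicit $\lambda$-adic formula (\cite{Cassels}[Exercise 2.12]) used in Theorem \ref{not-7mod25}, I expect exactly one independent nontrivial entry to survive, so $\mathrm{rank}\,C_1=1$.

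The crux — and the step I expect to be hardest — is proving that $C_1$ has rank exactly $1$, neither $0$ nor $2$. That it is at most $1$ should follow as in Theorems \ref{-1mod5} and \ref{-7mod25}: the row attached to $p$ vanishes, because every unit and $q$ is a fifth power modulo $(p)$ (here $5\nmid p-1$) and $p$ itself is a local fifth power at $\lambda$ (as $p\equiv\pm7\pmod{25}$, hence $p\in(\Q_5^\ast)^5$), so the $(\lambda)$-entry of that row is trivial too. The point requiring genuine work is that the rank is not $0$: in each single-prime situation of Theorems \ref{-7mod25} and \ref{-1mod5} one has $s_1=0$, so the extra cyclic factor $R/\lambda$ recorded in Table \ref{Tab3} arises purely from the interaction between the inert prime $p$ and the split primes $\pi_1,\pi_2$. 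Capturing it amounts to showing that the relevant cross symbol — a quintic residue linking $\pi_1$ to $(p)$, transferred by Hilbert reciprocity to the wild symbol at $\lambda$, or equivalently the $\lambda$-adic symbol of the second generator — is nontrivial; this is where the hypotheses $p\equiv\pm7\pmod{25}$ and $q\equiv-1\pmod5$ must be used in tandem, and I anticipate the unramified and ramified regimes needing separate local computations. Granting $s_1=1$, the identity $\lambda^2\text{-rank}=t-s_1=1$ and the bounds $3=2t-s_1\le\mathrm{rank}\,S_K\le 4t-3s_1=5$ follow immediately, and $\mathrm{rank}\,S_K\ge3$ yields $125\mid h_K$.
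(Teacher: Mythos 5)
Your setup reproduces the paper's proof almost step for step: the factorization $n=p\pi_1\pi_2$ with $p$ inert and $q=\pi_1\pi_2$ split through $\Q(\sqrt5)\subset F$ as in Theorem \ref{-1mod5}, the two regimes according to $q\bmod 25$, the value $t=2$ in both, the choice $x_1=p$ and $x_2=\pi_1^{\alpha_1}\pi_2^{\alpha_2}$ for the genus field generators, and the vanishing of the entire $x_1$-row of $C_1$ (including the $(\lambda)$-entry, since $p\equiv\pm7\pmod{25}$ is a fifth power in $\Q_5^\ast$) are exactly what the paper does. Given $s_1=1$, the conclusions $\lambda^2\text{-rank}=t-s_1=1$ and $3=2t-s_1\le\mathrm{rank}\,S_K\le 4t-3s_1=5$ follow as you state.

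The gap is the one you flag yourself: you do not prove $s_1=1$, i.e.\ that the $x_2$-row of $C_1$ has a nonvanishing entry. This is not a deferrable detail — it is the entire content of the theorem beyond Theorem \ref{-1mod5}, since $s_1=0$ would instead give $\lambda^2$-rank $2$ and $4\le\mathrm{rank}\,S_K\le 8$, and $s_1=2$ would give rank $2$. The paper settles it by asserting, via the tame-symbol formula of \cite{Serre}[Ch.\ XIV], that all entries vanish except $\bigl(\tfrac{x_2,pq}{(p)}\bigr)\neq 1$. Moreover, the mechanism you propose for producing the nontrivial entry — transferring a cross symbol to the wild place by Hilbert reciprocity — cannot deliver it, at least in the regime $q\equiv-1\pmod{25}$: there both $p$ and $q$ are congruent to Teichm\"uller representatives mod $25$, so $pq\in(\Z_5^\ast)^5$ and $\bigl(\tfrac{\cdot\,,pq}{(\lambda)}\bigr)$ is identically trivial, whence the product formula forces the product of the tame entries of each row to equal $1$; a single isolated nontrivial entry is invisible to reciprocity and must come from a direct residue computation at $(p)$. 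That computation is itself delicate: $\overline{x_2}=\overline{\pi_1}^{\alpha_1}\overline{\pi_2}^{\alpha_2}$ lies in the subfield $\mathbb{F}_{p^2}$ of the residue field $\mathbb{F}_{p^4}$, and the exponent $(p^4-1)/5$ is a multiple of $p^2-1$, so one must explain carefully why the symbol survives (e.g.\ how the unit $u$ in $x_2$, or the correct normalization of the generator, enters). So the step you postpone as ``the hardest'' is precisely where the theorem lives, and your sketch neither supplies the paper's argument nor a workable substitute for it.
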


\begin{proof}
Suppose firstly that $q \equiv -1 \pmod{25}$. Then, $q$ factors as $\pi_{1}\pi_{2}$ in $F$ as in Theorem \ref{-1mod5}. Since $p \equiv \pm 7 \pmod{25}$, $p$ is prime in $F$. We have $N(p)=p^4 \equiv 1 \pmod{25}$ and $N(\pi_{i})=q^2 \equiv 1 \pmod{25}$ for $i=1,2$. Thus $\zeta \in N_{K/F}(K^{\ast})$.\\
As in Theorem \ref{-1mod5}, we see that $1+\zeta$ is a fifth power modulo $\pi_{1}$ and modulo $\pi_{2}$.
Considering the intermediate field $K_{1}=F(p^{\frac{1}{5}})$, we see that the Hasse formula from section 5.1 for this situation gives
$t_{1}=d_{1}-3+q_{1}^{\ast}=q_{1}^{\ast}-2$. Thus $q_{1}^{\ast}=2$, that is, $1+\zeta$ is a fifth power modulo $p$. Thus $1+\zeta \in N_{K/F}(K^{\ast})$. We note that in this case $\lambda$ does not ramify.\\
If $q \not\equiv  -1 \pmod{25}$, then $N(\pi_{i})=q^2 \not\equiv 1 \pmod{25}$ for $i=1,2$. But in this situation, $-\zeta^{2}(1+\zeta) \in N_{K/F}(K^{\ast})$. So $q^{\ast}=1$ and $\lambda$ ramifies.
Combining these facts, we immediately see that, in both cases, $t=g-1=3-1=2$.\\
Next we want to compute $s_{1}$.  Let $x_{1}=p$ and $x_{2}=\pi_{1}^{\alpha_{1}} \pi_{2}^{\alpha_{2}} $ as in the proof of Theorem \ref{-1mod5}. When $q \equiv -1 \pmod{25}$, to compute the matrix $C_{1}$, we need to compute the Hilbert symbols, $\Big(\frac{x_1,pq}{p}\Big)$, $\Big(\frac{x_1,pq}{\pi_{1}}\Big)$, $\Big(\frac{x_1,pq}{\pi_{2}}\Big)$, $\Big(\frac{x_2,pq}{p}\Big)$, $\Big(\frac{x_2,pq}{\pi_{1}}\Big)$, $\Big(\frac{x_2,pq}{\pi_{2}}\Big)$. Using the formula given in \cite{Serre}[Chapter 14, Section 3] , we can see as in Theorem \ref{-1mod5}, that \\
$$\Big(\frac{x_1,pq}{p}\Big)=\Big(\frac{x_1,pq}{\pi_{1}}\Big)=\Big(\frac{x_1,pq}{\pi_{2}}\Big)=\Big(\frac{x_2,pq}{\pi_{1}}\Big)=\Big(\frac{x_2,pq}{\pi_{2}}\Big)=1,$$
and $\Big(\frac{x_2,pq}{p}\Big) \neq 1$. Thus, the $2 \times 3$ matrix $C_{1}$ has only one nonzero entry. So $s_{1}=1$. \\
When $q \not\equiv -1 \pmod{25}$, $C_{1}$ has one more column consisting of $\Big(\frac{x_1,\lambda}{(\lambda)}\Big)$ and
$\Big(\frac{x_2,\lambda}{(\lambda)}\Big)$. We see as in \ref{-1mod5},
$\Big(\frac{x_2,\lambda}{(\lambda)}\Big)=1$ and since $x_{1}= \pm 7 \pmod{25}$, $\Big(\frac{x_1,\lambda}{(\lambda)}\Big)=1$
as well. Then. the $2\times4$ matrix $C_{1}$ in this case also has only one nonzero entry. So $s_{1}=1$. \\
Thus we see that in both cases, the $\lambda^2$-rank of $S_{K}$ is
$t-s_{1}=1$. Lastly, we observe that,
$$3=2t-s_{1} \leq \mathrm{rank } S_{K} \leq 4t-3s_{1} =5.$$
\end{proof}

\noindent {\bf Remarks.} If there are ambiguous ideal classes which
are not strongly ambiguous, then in Theorem \ref{-1mod5}, $s_{1}$
can possibly be equal to $1$; in that case, the rank of $S_{K}$
would be $1$. Similarly, in Theorem \ref{2primes}, $s_{1}$ can
possibly be $2$; in that case, the rank of $S_{K}$ would be $2$.
But, we have not been able to find any example for either of these
situations; perhaps, under the hypotheses of theorem 5.16 or of
theorem 5.17, all ambiguous ideal classes are strongly ambiguous.
\vskip 5mm

\section{$5$-class group of pure quintic fields}

\noindent In this final section, we apply the results of the last
section (especially theorems 5.12,5.13 and 5.14) to deduce results
on some quintic extensions of $\mathbb{Q}$. Let $L$ be a degree $5$
extension of $\Q$ such that $[L(\zeta_{5}):L]=4$ and
$\mathrm{Gal}(L(\zeta_{5})/L) \cong \Z/4\Z=G$. Let $K=L(\zeta_{5})$.

\begin{lem}\label{factors}
Let $C$ be a $\Z_{5}[G]$ module and $G=<\sigma>$. Let $C^{+}= \{ a
\in C | \sigma a=a \}, C^{-}= \{ a \in C |  \sigma a=-a \}$ and
$C^{--}= \{ a \in C | \sigma^{2} a=-a \}$. Then $C \cong C^{+}
\oplus C^{-} \oplus C^{--}$.
\end{lem}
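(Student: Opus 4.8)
The plan is to exhibit three explicit orthogonal idempotents in the group ring $\Z_5[G]$ whose images are exactly $C^{+}$, $C^{-}$ and $C^{--}$; the whole argument rests on the single arithmetic fact that $|G|=4$ is a unit in $\Z_5$ (since $\gcd(4,5)=1$), so that $2$ and $4$ are invertible and Maschke-style projectors are available. Because $\Z_5[G]$ is commutative, the idempotents are central and the resulting direct sum is automatically a decomposition of $\Z_5[G]$-modules.

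First I would set
$$e^{+}=\tfrac14(1+\sigma+\sigma^2+\sigma^3),\qquad e^{-}=\tfrac14(1-\sigma+\sigma^2-\sigma^3),\qquad e^{--}=\tfrac12(1-\sigma^2),$$
all of which lie in $\Z_5[G]$ since $2,4\in\Z_5^{\times}$. A direct computation using $\sigma^4=1$ shows that $e^{+}+e^{-}+e^{--}=1$ and that the three elements are pairwise orthogonal. For this it is convenient to write $e^{+}=\tfrac14(1+\sigma)(1+\sigma^2)$ and $e^{-}=\tfrac14(1-\sigma)(1+\sigma^2)$; then $(1+\sigma^2)(1-\sigma^2)=1-\sigma^4=0$ kills both products involving $e^{--}$, while $(1+\sigma)(1-\sigma)=1-\sigma^2$ together with $(1+\sigma^2)^2=2(1+\sigma^2)$ gives $e^{+}e^{-}=0$. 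Idempotency is then automatic, since for instance $e^{+}=e^{+}(e^{+}+e^{-}+e^{--})=(e^{+})^2$. Hence we obtain a direct sum decomposition $C=e^{+}C\oplus e^{-}C\oplus e^{--}C$.

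It remains to identify the three summands with the subspaces in the statement, which I would do by checking both inclusions in each case. On one hand, $\sigma e^{+}=e^{+}$, $\sigma e^{-}=-e^{-}$ and $\sigma^2 e^{--}=-e^{--}$ in $\Z_5[G]$ (each an immediate consequence of $\sigma^4=1$), which forces $e^{+}C\subseteq C^{+}$, $e^{-}C\subseteq C^{-}$ and $e^{--}C\subseteq C^{--}$. On the other hand, if $a\in C^{+}$ then $\sigma^{i}a=a$ for all $i$, so $e^{+}a=\tfrac14(a+a+a+a)=a$ and $a\in e^{+}C$; the analogous one-line evaluations (using $\sigma a=-a\Rightarrow\sigma^2a=a,\ \sigma^3a=-a$ for $C^{-}$, and $\sigma^2a=-a$ for $C^{--}$) give the remaining reverse inclusions. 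Thus $e^{+}C=C^{+}$, $e^{-}C=C^{-}$ and $e^{--}C=C^{--}$, and the decomposition above reads $C\cong C^{+}\oplus C^{-}\oplus C^{--}$.

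There is no serious obstacle here: this is just the semisimplicity of $\Z_5[G]$ over a ring in which $|G|$ is invertible, and the only points needing care are the routine verifications that the displayed elements are orthogonal idempotents summing to $1$ and that each idempotent's image matches the corresponding $\sigma$-eigenspace. I would stress one feature of the statement that keeps the argument clean: one does not need $\Z_5$ to contain a primitive fourth root of unity. The block $C^{--}$ is kept whole as the $(-1)$-eigenspace of the involution $\sigma^2$, rather than being split further into the $\pm i$-eigenspaces of $\sigma$, which is precisely the decomposition the lemma requests.
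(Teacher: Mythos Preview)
Your proof is correct and follows essentially the same approach as the paper: both use the projectors $\tfrac14(1+\sigma+\sigma^2+\sigma^3)$, $\tfrac14(1-\sigma+\sigma^2-\sigma^3)$, and $\tfrac12(1-\sigma^2)$, relying on the invertibility of $2$ and $4$ in $\Z_5$. The only cosmetic difference is that you package the argument as orthogonal idempotents in $\Z_5[G]$ summing to $1$, whereas the paper writes the decomposition of an arbitrary element directly and then checks pairwise intersections are trivial; your formulation is slightly more systematic but the content is the same.
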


\begin{proof}
Let $a \in C$. Write $a =
({\frac{1+\sigma+\sigma^2+\sigma^3}{4}}a)+(
{\frac{1-\sigma+\sigma^2-\sigma^3}{4}}a
)+({\frac{1-\sigma^2}{2}}a)$. Then
${\frac{1+\sigma+\sigma^2+\sigma^3}{4}}a  \in C^{+}$,
${\frac{1-\sigma+\sigma^2-\sigma^3}{4}}a  \in C^{-}$ and
${\frac{1-\sigma^2}{2}}a  \in C^{--}$. Let $b \in C^{+} \cap C^{-}$,
then  $b = {\sigma}b=-b$, that is $2b=0$. Thus $b=0$ as $C$ is a
$\Z_{5}$ module. Similarly we can show that, $C^{+} \cap
C^{--}=C^{-} \cap C^{--}= \{0\}$. Hence, $C \cong C^{+} \oplus C^{-}
\oplus C^{--}$.
\end{proof}

\begin{lem}\label{L&K}
Let $S_{K}$ and $S_{L}$ denotes the $5$-class group of $K$ and $L$
respectively. Then, $S_{L} \cong S_{K}^{+}$ and $S_{L}/5S_{L} \cong
(S_{K}/5S_{K})^{+}$.
\end{lem}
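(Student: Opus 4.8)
The plan is to compare $S_L$ with the $G$-fixed part $S_K^+$ through the two natural maps between the class groups, and then exploit that $[K:L]=4$ is a unit modulo $5$. Write $j\colon S_L\to S_K$ for the homomorphism induced by extension of ideals from $L$ to $K$, and $N\colon S_K\to S_L$ for the one induced by the relative norm $N_{K/L}$; both respect the $5$-Sylow subgroups since norm and extension are functorial. I would first record the two standard identities for the Galois extension $K/L$ with group $G=\langle\sigma\rangle$: namely that $N\circ j$ is multiplication by $[K:L]=4$ on $S_L$ (because $N_{K/L}(\mathfrak{a}\mathcal{O}_K)=\mathfrak{a}^{[K:L]}$ by the fundamental identity $\sum e_if_i=[K:L]$), while $j\circ N$ equals the algebraic norm $\sum_{g\in G}g=1+\sigma+\sigma^2+\sigma^3$ on $S_K$ (because for a Galois extension the extension of $N_{K/L}(\mathfrak{A})$ back to $K$ equals $\prod_{g\in G}g\mathfrak{A}$, each prime above $\mathfrak{p}$ occurring with multiplicity $ef=|G_{\mathfrak{P}}|$).

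Since $S_L$ and $S_K$ are $5$-groups and $4$ is invertible in $\mathbb{Z}_5$, the identity $N\circ j=4$ shows that $j$ is injective. Its image lies in $S_K^G=S_K^+$ (an ideal extended from $L$ is fixed by every $g\in G$, and $G=\langle\sigma\rangle$ forces $S_K^G=S_K^+$). For surjectivity onto $S_K^+$, I would apply the second identity to $a\in S_K^+$: because $\sigma a=a$ we get $j(N(a))=(1+\sigma+\sigma^2+\sigma^3)a=4a$, whence $a=j\!\left(4^{-1}N(a)\right)$, where $4^{-1}$ is taken on the $5$-group $S_L$ and $N(a)\in S_L$. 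Thus $j$ restricts to an isomorphism $S_L\cong S_K^+$, which is the first assertion.

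For the second assertion I would pass to the idempotents underlying Lemma~\ref{factors}. The elements $e^+=\tfrac{1+\sigma+\sigma^2+\sigma^3}{4}$, $e^-=\tfrac{1-\sigma+\sigma^2-\sigma^3}{4}$ and $e^{--}=\tfrac{1-\sigma^2}{2}$ are orthogonal idempotents in $\mathbb{Z}_5[G]$, since $2$ and $4$ are units there, and they cut out $S_K^+,S_K^-,S_K^{--}$, giving $S_K=S_K^+\oplus S_K^-\oplus S_K^{--}$ and hence $5S_K=5S_K^+\oplus 5S_K^-\oplus 5S_K^{--}$. In particular $5S_K\cap S_K^+=5S_K^+$. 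The same idempotents act on $S_K/5S_K$, so applying Lemma~\ref{factors} to the $\mathbb{Z}_5[G]$-module $S_K/5S_K$ gives $(S_K/5S_K)^+=e^+(S_K/5S_K)$, which is the image of $S_K^+$ under reduction, namely $S_K^+/(5S_K\cap S_K^+)=S_K^+/5S_K^+$. Combining this with $S_L\cong S_K^+$ from the first part yields $S_L/5S_L\cong S_K^+/5S_K^+\cong(S_K/5S_K)^+$, as required.

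The only genuinely delicate points are the two ideal-theoretic identities of the first paragraph — especially $j\circ N=\sum_{g\in G}g$, which is exactly what delivers surjectivity onto $S_K^+$ — together with the compatibility of the eigenspace decomposition with reduction modulo $5$, i.e.\ the equality $5S_K\cap S_K^+=5S_K^+$. I expect no real obstacle beyond this bookkeeping, because once the $\mathbb{Z}_5[G]$-idempotents are in hand both facts are formal; the whole argument is the standard transfer principle made possible by $\gcd([K:L],5)=1$.
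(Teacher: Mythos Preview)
Your proof is correct and follows essentially the same line as the paper's: the paper also uses the inclusion $S_L\hookrightarrow S_K^+$ (via coprimality of $4$ and $5$) and, for surjectivity, writes $a\in S_K^+$ as $(1+\sigma+\sigma^2+\sigma^3)(\tfrac{1}{4}a)=N(\tfrac{1}{4}a)$. Your treatment of the second isomorphism via the idempotents is more explicit than the paper's one-line assertion $S_K^+/5S_K^+\cong(S_K/5S_K)^+$, but the underlying idea is identical.
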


\begin{proof}
We have a natural inclusion $S_{L} \hookrightarrow S_{K}$ as $5$ is
relatively prime to $[K:L]=4$. Moreover, $S_{L} \hookrightarrow
S_{K}^{+}$ as ${\sigma}a=a$ for all $a \in S_{L}$. Let $a \in
S_{K}^{+}$, then $a = 4({\frac{1}{4}}a) = (1+\sigma
+\sigma^{2}+\sigma^{3})({\frac{1}{4}}a)= N({\frac{1}{4}}a)$. Thus,
$a \in S_{L}$. So $S_{L} \cong S_{K}^{+}$. Now, $S_{L}/5S_{L} \cong
S_{K}^{+}/ 5(S_{K}^{+}) \cong (S_{K}/5S_{K})^{+}$.
\end{proof}

\subsection{Decomposing $S_K$ under the affine group of
$\mathbb{F}_5$}

\noindent Now let $L$ is a pure quintic field, that is
$L=\Q(n^{\frac{1}{5}})$, where $n$ is a positive integer which does
not contain any $5^{th}$ power. Let $F=\Q(\zeta_{5})$ and
$K=F(n^{\frac{1}{5}})=L(\zeta_{5})$. Then $K$ is a cyclic extension
of degree $5$ over $F$ and we can use the theory developed in the
previous sections to determine $S_{K}$. Let $\sigma$ be a generator
of $G=\mathrm{Gal}(K/L)$ and $\tau$ be a generator of
$\mathrm{Gal}(K/F)$. We observe that, $K/\Q$ is Galois. We fix the
generators $\sigma, \tau$ in $\mathrm{Gal}(K/\Q)$ satisfying the
relations
$$\sigma^{4}=\tau^5=1~,~\sigma \tau = \tau^{3} \sigma.$$
Let $\lambda = 1 - \tau$.\\
Note that $S_{K}, 5S_{K}$ are $\Z_{5}[G]$-modules. Consider the
filtration
$$S_{K} \supset \lambda S_{K} \supset \lambda^{2}S_{K} \supset \lambda^{3}S_{K} \supset 5 S_{K}=\lambda^{4} S_{K}.$$
Using the relations $\sigma \tau = \tau^3 \sigma$ and $\tau = 1-
\lambda$, we note that
$$\sigma \lambda a = \lambda((\lambda^{2}-3
\lambda+3) \sigma a),$$ $$ \sigma \lambda^{2} a =
(\lambda^{2}((2\lambda - 1)\sigma a))5(\lambda(1-\lambda)\sigma
a),$$ $$ \sigma \lambda^{3} a = (\lambda^{3}((2-\lambda)\sigma
a))5(\lambda(1-\lambda)(2-\lambda) \sigma a).$$ So,
$\lambda^{i}S_{K}$ for $0 \leq i \leq 4$ are $\Z_{5}[G]$-modules.
Using lemma \ref{factors}, we get,
$$\lambda^{i}S_{K}/5S_{K} \cong (\lambda^{i}S_{K}/5S_{K})^{+} \oplus (\lambda^{i}S_{K}/5S_{K})^{-} \oplus (\lambda^{i}S_{K}/5S_{K})^{--}  \text{   for } 0\leq i \leq 3,$$
$$\lambda^{i}S_{K}/\lambda^{i+1}S_{K} \cong (\lambda^{i}S_{K}/\lambda^{i+1}S_{K})^{+} \oplus (\lambda^{i}S_{K}/\lambda^{i+1}S_{K})^{-} \oplus (\lambda^{i}S_{K}/\lambda^{i+1}S_{K})^{--} \text{   for } 0\leq i \leq 3.$$
The natural projection $\lambda^{i}S_{K}/5S_{K} \to
\lambda^{i}S_{K}/\lambda^{i+1}S_{K}$ is surjective with kernel
$\lambda^{i+1}S_{K}/5S_{K}$. Restricting to the + part, we get the
surjective map $(\lambda^{i}S_{K}/5S_{K})^{+} \to
(\lambda^{i}S_{K}/\lambda^{i+1}S_{K})^{+}$ with kernel
$(\lambda^{i+1}S_{K}/5S_{K})^{+}$. Since $(S_{K}/5S_{K})^{+},
(\lambda^{i}S_{K}/\lambda^{i+1}S_{K})^{+}$ are of exponent $5$, we
have,
$$
\mathrm{rank} S_{L} = \mathrm{rank} (S_{K}/5S_{K})^{+}$$
$$ = \mathrm{rank} (S_{K}/\lambda S_{K})^{+} +\mathrm{rank} (\lambda S_{K}/\lambda^{2}S_{K})^{+}
$$
$$+ \mathrm{rank} (\lambda^{2}S_{K}/\lambda^{3}S_{K})^{+} +
\mathrm{rank} (\lambda^{3}S_{K}/\lambda^{4}S_{K})^{+}.$$

\noindent The rank of $(S_{K}/\lambda S_{K})^{+}$ can be read off from the generators of the genus field,
which we will describe at the end. We first determine the rank of $(\lambda^{i}S_{K}/\lambda^{i+1}S_{K})^{+}$ for $i \geq 1$.\\
As before, consider the map induced by multiplication by $\lambda$:
$$\lambda_{i}^{\ast}: \lambda^{i}S_{K}/\lambda^{i+1}S_{K} \to \lambda^{i+1}S_{K}/\lambda^{i+2}S_{K}$$
$$  a \pmod{\lambda^{i+1}S_{K}} \mapsto \lambda a \pmod{\lambda^{i+2}S_{K}}.$$
We observe that,
$${\sigma^{2}} \lambda a \equiv -\lambda \sigma^{2}a \pmod{\lambda^{2}S_{K}}.$$
Thus $$\lambda_{0}^{\ast} ( (S_{K}/\lambda S_{K})^{+} \oplus
(S_{K}/\lambda S_{K})^{-}) \subset (\lambda
S_{K}/\lambda^{2}S_{K})^{--}$$ and  $$\lambda_{0}^{\ast} (
(S_{K}/\lambda S_{K})^{--}) \subset (\lambda
S_{K}/\lambda^{2}S_{K})^{+} \oplus (\lambda
S_{K}/\lambda^{2}S_{K})^{-}.$$ Thus we have two surjective maps,
$$\theta_{1}: (S_{K}/\lambda S_{K})^{+} \oplus (S_{K}/\lambda S_{K})^{-} \to (\lambda S_{K}/ \lambda^{2}S_{K})^{--}, $$
$$ \theta_{2}: (S_{K}/\lambda S_{K})^{--} \to (\lambda S_{K}/\lambda^{2}S_{K})^{+} \oplus (\lambda S_{K}/\lambda^{2}S_{K})^{-}.$$
We note that $\mathrm{rank}~\mathrm{Ker} \theta_{1} + \mathrm{rank}~\mathrm{Ker} \theta_{2} = s_{1}$. \\
We have,
$$\mathrm{rank} (\lambda S_{K}/\lambda^{2} S_{K})^{+} = \mathrm{rank} (S_{K}/\lambda S_{K})^{--} - \mathrm{rank}
(\lambda S_{K}/\lambda^{2}S_{K})^{-} - \mathrm{rank}~\mathrm{Ker}
\theta_{2}.$$

\noindent Next we observe that
$$\sigma \lambda^{2} a \equiv - \lambda^{2} \sigma a \pmod{\lambda^{3}S_{K}}~,~
\sigma^{2}\lambda^{2}a \equiv \lambda^{2}\sigma^{2}a
\pmod{\lambda^{3}S_{K}} .$$ Thus, $$\lambda_{1}^{\ast} ( (\lambda
S_{K}/\lambda S_{K})^{+}) \subset
(\lambda^{2}S_{K}/\lambda^{3}S_{K})^{-},$$ $$\lambda_{1}^{\ast} (
(\lambda S_{K}/\lambda^{2}S_{K})^{-}) \subset
(\lambda^{2}S_{K}/\lambda^{3}S_{K})^{+},$$
$$\lambda_{1}^{\ast} (
(\lambda S_{K}/\lambda^{2}S_{K})^{--}) \subset
(\lambda^{2}S_{K}/\lambda^{3}S_{K})^{--}.$$ We thus obtain three
surjective maps,
$$\alpha_{1}: (\lambda S_{K}/\lambda^{2} S_{K})^{-} \to (\lambda^{2}S_{K}/\lambda^{3}S_{K})^{+},$$
$$\alpha_{2}: (\lambda S_{K}/\lambda^{2} S_{K})^{+} \to (\lambda^{2}S_{K}/\lambda^{3}S_{K})^{-},$$
$$\alpha_{3}: (\lambda S_{K}/\lambda^{2} S_{K})^{--} \to (\lambda^{2}S_{K}/\lambda^{3}S_{K})^{--}.$$
We note that $\mathrm{rank}~\mathrm{Ker} \alpha_{1} +
\mathrm{rank}~\mathrm{Ker} \alpha_{2} + \mathrm{rank}~\mathrm{Ker}
\alpha_{3} = \mathrm{rank} \frac{(\lambda S_K))[\lambda] + \lambda^2
S_K}{\lambda^2 S_K} = s_{2}$. \\
So, we have
$$\mathrm{rank} (\lambda^{2}S_{K}/\lambda^{3}S_{K})^{+} = \mathrm{rank}
(\lambda S_{K}/\lambda^{2} S_{K})^{-} - \mathrm{rank } \mathrm{Ker} \alpha_{1}.$$

\noindent Finally we observe that,
$$\sigma^{2} \lambda^{3} a \equiv -\lambda^{3} \sigma^{2}a \pmod{\lambda^{4}S_{K}}.$$
Thus, we obtain
$$\lambda_{2}^{\ast} (
(\lambda^{2}S_{K}/\lambda^{3}S_{K})^{+} \oplus
(\lambda^{2}S_{K}/\lambda^{3}S_{K})^{-}) \subset
(\lambda^{3}S_{K}/\lambda^{4}S_{K})^{--}$$ and
$$\lambda_{2}^{\ast}
( (\lambda^{2}S_{K}/\lambda^{3}S_{K})^{--}) \subset
(\lambda^{3}S_{K}/\lambda^{4}S_{K})^{+} \oplus
(\lambda^{3}S_{K}/\lambda^{4}S_{K})^{-}.$$ Therefore, we have two
surjective maps,
$$\beta_{1}: (\lambda^{2}S_{K}/\lambda^{3}S_{K})^{+} \oplus (\lambda^{2}S_{K}/\lambda^{3}S_{K})^{-} \to (\lambda^{3}S_{K}/\lambda^{4}S_{K})^{--}, $$
$$ \beta_{2}: (\lambda^{2}S_{K}/\lambda^{3}S_{K})^{--} \to (\lambda^{3}S_{K}/\lambda^{4}S_{K})^{+} \oplus (\lambda^{3}S_{K}/\lambda^{4}S_{K})^{-}.$$
We note that $\mathrm{rank}~\mathrm{Ker} \beta_{1} +
\mathrm{rank}~\mathrm{Ker} \beta_{2} = \mathrm{rank}
\frac{(\lambda^2
S_K))[\lambda] + \lambda^3 S_K}{\lambda^3 S_K} = s_{3}$. \\
Notice that
$$\mathrm{rank} (\lambda^{3}S_{K}/\lambda^{4}S_{K})^{+} = \mathrm{rank} (\lambda^{2}S_{K}/\lambda^{3}S_{K})^{--} - \mathrm{rank} (\lambda^{3}S_{K}/\lambda^{4}S_{K})^{-} - \mathrm{rank}~\mathrm{Ker} \beta_{2}.$$
It is easy to see that,
\begin{multline*}
\mathrm{rank} (\lambda^{3}S_{K}/\lambda^{4}S_{K})^{+} = \mathrm{rank} (S_{K}/\lambda S_{K})^{+} +  \mathrm{rank} (S_{K}/\lambda S_{K})^{-} - \mathrm{rank}~\mathrm{Ker} \theta_{1}\\
 - \mathrm{rank}~\mathrm{Ker} \alpha_{3} - \mathrm{rank}~\mathrm{Ker} \beta_{2} - \mathrm{rank} (\lambda^{3}S_{K}/\lambda^{4}S_{K})^{-}.
\end{multline*}
Putting these together, we get
\begin{multline*}
\mathrm{rank} S_{L} =  \mathrm{rank} (S_{K}/\lambda S_{K})^{+} +  \mathrm{rank} (S_{K}/\lambda S_{K}) - (\mathrm{rank}~\mathrm{Ker} \theta_{1}+\mathrm{rank}~\mathrm{Ker} \theta_{2} \\
 + \mathrm{rank}~\mathrm{Ker} \alpha_{1} + \mathrm{rank}~\mathrm{Ker} \alpha_{3} + \mathrm{rank}~\mathrm{Ker} \beta_{2} + \mathrm{rank} (\lambda^{3}S_{K}/\lambda^{4}S_{K})^{-}).
 \end{multline*}
Noting that, $\mathrm{rank} (S_{K}/\lambda S_{K})=t$ and
$\mathrm{rank}~\mathrm{Ker} \theta_{1}+\mathrm{rank}~\mathrm{Ker}
\theta_{2}=s_{1}$, we obtain an upper bound for the rank of $S_{L}$
as
$$ \mathrm{rank} S_{L} \leq t-s_{1}+  \mathrm{rank} (S_{K}/\lambda S_{K})^{+} \leq 2t-s_{1}.$$
Thus, we have proved the following theorem.

\begin{thm}\label{ranksl}
Let $L=\Q(n^{\frac{1}{5}})$, where $n$ is an integer which does not
contain any fifth power. Let $F=\Q(\zeta_{5})$ and
$K=F(n^{\frac{1}{5}})=L(\zeta_{5})$. Then,
$$ \mathrm{rank } S_{L} \leq \mathrm{rank }(S_{K}/\lambda S_{K})^{+} + \lambda^2-{\rm rank} S_K = \mathrm{rank }(S_{K}/\lambda S_{K})^{+} + (t-s_{1}) \leq 2t-s_{1}.$$
\end{thm}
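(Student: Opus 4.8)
The plan is to transfer the whole question to the $G$-module $S_K$, where $G=\langle\sigma\rangle=\mathrm{Gal}(K/L)$, and then compute $+$-ranks along the $\lambda$-adic filtration. First I would apply Lemma \ref{L&K} to identify $\mathrm{rank}\,S_L$ with $\mathrm{rank}(S_K/5S_K)^+$. Since $5S_K=\lambda^4 S_K$ and each $\lambda^i S_K$ is a $\mathbb{Z}_5[G]$-module (this is where the relation $\sigma\tau=\tau^3\sigma$ enters, via the displayed identities for $\sigma\lambda a$, $\sigma\lambda^2 a$, $\sigma\lambda^3 a$), Lemma \ref{factors} splits every graded quotient $\lambda^i S_K/\lambda^{i+1}S_K$ into its $+$, $-$ and $--$ eigencomponents. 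Because $4$ is a unit in $\mathbb{Z}_5$, the idempotent $\tfrac{1+\sigma+\sigma^2+\sigma^3}{4}$ makes the functor $(-)^+$ an exact direct-summand projection, so $+$-ranks are additive in short exact sequences; running this down the filtration gives
$$\mathrm{rank}\,S_L=\sum_{i=0}^{3}\mathrm{rank}(\lambda^i S_K/\lambda^{i+1}S_K)^+.$$

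Next I would analyse the surjective multiplication-by-$\lambda$ maps $\lambda_i^{\ast}\colon \lambda^i S_K/\lambda^{i+1}S_K\to\lambda^{i+1}S_K/\lambda^{i+2}S_K$, the point being to see how each carries the $\sigma$-eigenspaces around. Reducing the module identities modulo the next power of $\lambda$ yields the congruences $\sigma^2\lambda a\equiv-\lambda\sigma^2 a$, $\sigma\lambda^2 a\equiv-\lambda^2\sigma a$, $\sigma^2\lambda^2 a\equiv\lambda^2\sigma^2 a$, and $\sigma^2\lambda^3 a\equiv-\lambda^3\sigma^2 a$ (each modulo the appropriate $\lambda^{i+1}S_K$). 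These show that at the even levels $\lambda_i^{\ast}$ sends the pair $\{+,-\}$ into $--$ and sends $--$ into $+\oplus-$, while at the odd level it interchanges $+\leftrightarrow-$ and fixes $--$. That is exactly the content of the surjections $\theta_1,\theta_2$, $\alpha_1,\alpha_2,\alpha_3$, $\beta_1,\beta_2$; and since the kernel of $\lambda_{j-1}^{\ast}$ is $((\lambda^{j-1}S_K)[\lambda]+\lambda^j S_K)/\lambda^j S_K$, of rank $s_j$, the eigenspace kernel ranks at each level sum to $s_1,s_2,s_3$ respectively.

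I would then apply rank--nullity to each surjection to write every $+$-graded rank as a previous-level $-$ or $--$ rank minus a kernel rank, and telescope the four terms. The bookkeeping collapses to
\begin{multline*}
\mathrm{rank}\,S_L=\mathrm{rank}(S_K/\lambda S_K)^++\mathrm{rank}(S_K/\lambda S_K)\\
-\big(\mathrm{rank}\,\mathrm{Ker}\,\theta_1+\mathrm{rank}\,\mathrm{Ker}\,\theta_2+\mathrm{rank}\,\mathrm{Ker}\,\alpha_1+\mathrm{rank}\,\mathrm{Ker}\,\alpha_3+\mathrm{rank}\,\mathrm{Ker}\,\beta_2+\mathrm{rank}(\lambda^3 S_K/\lambda^4 S_K)^-\big).
\end{multline*}
Using $\mathrm{rank}(S_K/\lambda S_K)=t$ and $\mathrm{rank}\,\mathrm{Ker}\,\theta_1+\mathrm{rank}\,\mathrm{Ker}\,\theta_2=s_1$, and discarding the remaining non-negative kernel terms, I obtain $\mathrm{rank}\,S_L\leq \mathrm{rank}(S_K/\lambda S_K)^++(t-s_1)$; the last inequality $\leq 2t-s_1$ is immediate since $(S_K/\lambda S_K)^+$ is a subspace of the $t$-dimensional $\mathbb{F}_5$-space $S_K/\lambda S_K$.

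The main obstacle I anticipate is precisely the level-by-level tracking of the eigenspaces under the $\lambda_i^{\ast}$: getting the signs and the target eigenspaces right in the four congruences is where the argument is delicate, and the crucial asymmetry — that at the two even levels $\{+,-\}$ is folded into $--$ whereas at the odd level $+$ and $-$ merely interchange — is exactly what forces the telescoped bound to be $t-s_1$ rather than something weaker. Everything else is exactness of $(-)^+$ and routine rank--nullity.
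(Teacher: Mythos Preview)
Your proposal is correct and follows essentially the same route as the paper: identify $\mathrm{rank}\,S_L$ with $\mathrm{rank}(S_K/5S_K)^+$ via Lemma~\ref{L&K}, break this up along the $\lambda$-adic filtration using the exactness of $(-)^+$, track how the surjections $\lambda_i^{\ast}$ permute the $+,-,--$ eigenspaces via the congruences coming from $\sigma\tau=\tau^3\sigma$, and telescope through the maps $\theta_i,\alpha_i,\beta_i$ to the displayed identity before discarding nonnegative kernel terms. Your notation, congruences, and final bookkeeping match the paper's argument almost line for line.
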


\begin{cor}\label{t=s1}
When $t=s_{1}$, $\mathrm{rank} S_{L} =\mathrm{rank }(S_{K}/\lambda S_{K})^{+}$.
\end{cor}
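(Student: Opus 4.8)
The plan is to deduce the equality by combining Lemma~\ref{L&K} with the degeneration of the $R$-module $S_K$ that is forced by the hypothesis $t=s_1$. Recall from the notations that $t = s_1 + s_2 + \cdots + s_{\ell-2}$ with all $s_i \ge 0$, so $t = s_1$ immediately gives $s_i = 0$ for every $i \ge 2$. Since $s_i = |\{j : e_j = i\}|$, this means every elementary divisor exponent equals $1$, whence
$$S_K \cong (R/\lambda R)^{t},$$
and in particular $\lambda S_K = 0$. As $(\lambda^{4}) = (5)$, we also have $5 S_K = 0$. (This collapse is exactly the content of the first corollary to Proposition~\ref{prop1}.)

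With $\lambda S_K = 5 S_K = 0$, the two quotients in the statement both collapse to $S_K$ itself: $S_K/\lambda S_K = S_K = S_K/5S_K$. Passing to $G$-fixed parts gives $(S_K/\lambda S_K)^+ = S_K^+ = (S_K/5S_K)^+$, and therefore
$$\mathrm{rank}\,(S_K/\lambda S_K)^+ = \mathrm{rank}\,(S_K/5S_K)^+ .$$
Now I would invoke Lemma~\ref{L&K}, which yields $\mathrm{rank}\,S_L = \mathrm{rank}\,(S_K/5S_K)^+$; combining the two displays gives exactly $\mathrm{rank}\,S_L = \mathrm{rank}\,(S_K/\lambda S_K)^+$, as claimed.

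For orientation, the same conclusion is visible through Theorem~\ref{ranksl}: its bound reads $\mathrm{rank}\,S_L \le \mathrm{rank}\,(S_K/\lambda S_K)^+ + (t-s_1)$, which under $t=s_1$ supplies the upper inequality, while the reverse inequality comes for free from the natural $\mathbb{Z}_5[G]$-surjection $S_K/5S_K \twoheadrightarrow S_K/\lambda S_K$ (available since $5S_K \subseteq \lambda S_K$) after restricting to the $+$-summand cut out by the idempotent $\tfrac14(1+\sigma+\sigma^2+\sigma^3)$, which is exact over $\mathbb{Z}_5$. There is no real obstacle in the argument; the only step carrying any content is the observation that $t=s_1$ flattens the entire $\lambda$-filtration at the first stage, so that both the $\lambda$-quotient and the $5$-quotient coincide with $S_K$.
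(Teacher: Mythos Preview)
Your proof is correct and more direct than the paper's. The paper argues by plugging $t=s_1$ into the exact identity derived just before Theorem~\ref{ranksl},
\[
\mathrm{rank}\,S_L = \mathrm{rank}(S_K/\lambda S_K)^+ + t - s_1 - \bigl(\mathrm{rank}\,\mathrm{Ker}\,\alpha_1 + \mathrm{rank}\,\mathrm{Ker}\,\alpha_3 + \mathrm{rank}\,\mathrm{Ker}\,\beta_2 + \mathrm{rank}(\lambda^3 S_K/\lambda^4 S_K)^-\bigr),
\]
and then observes that $\lambda S_K = 5S_K$ and $s_2=s_3=0$ force each of the correction terms in parentheses to vanish. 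You instead note that $t=s_1$ forces $S_K\cong(R/\lambda R)^t$, hence $\lambda S_K=0=5S_K$, so $S_K/\lambda S_K$ and $S_K/5S_K$ are literally the same group; Lemma~\ref{L&K} then finishes immediately. Your route sidesteps the entire decomposition under the affine group of $\mathbb{F}_5$ (the maps $\theta_i,\alpha_i,\beta_i$), which is a genuine simplification here, though of course that machinery is what the paper needs for the general inequality of Theorem~\ref{ranksl}. Your closing paragraph giving the alternative via the $\mathbb{Z}_5[G]$-surjection $S_K/5S_K\twoheadrightarrow S_K/\lambda S_K$ is also fine but unnecessary once you have the first argument.
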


\begin{proof}
Since $t=s_{1}$, $\lambda S_{K}=5 S_{K}$ and $s_{2}=s_{3}=0$. Thus $\mathrm{Ker} \alpha_{i} =\mathrm{Ker} \beta_{j}=0 $ for $i=1,2,3$ and $j=1,2$. Moreover, $\lambda^{3}S_{K}/\lambda^{4}S_{K} =0$. So,
$$\mathrm{rank} S_{L} = \mathrm{rank }(S_{K}/\lambda S_{K})^{+}+ t-s_{1}=\mathrm{rank }(S_{K}/\lambda S_{K})^{+}.$$
\end{proof}

\subsection{Kummer duality to bound rank of $(S_K/\lambda S_K)^+$}

\noindent Finally we describe how one can determine $\mathrm{rank
}(S_{K}/\lambda S_{K})^{+}$ or give an upper bound for this rank.
Let $M$ be the maximal abelian unramified extension of $K$ with
exponent $5$. By class field theory, we have, $S_{K}/5S_{K} \cong
\mathrm{Gal}(M/K)$. By Kummer theory there exists a subgroup $A$ of
$K^{\ast}$,
$$(K^{\ast})^{5} \subset A \subset K^{\ast},$$
such that $M=K(\sqrt[5]{A})$. We have a bilinear pairing
$$A/(K^{\ast})^{5} \times \mathrm{Gal}(M/K) \to \{ 5^{th}\text{ roots of unity} \}$$
$$ (x,\mu) \mapsto [x, \mu] = (x^{\frac{1}{5}})^{\mu -1}.$$
By Kummer theory $A/(K^{\ast})^{5}$ and $\mathrm{Gal}(M/K)$ are dual
groups with respect to this pairing. Thus identifying $S_{K}/5S_{K}$
with $\mathrm{Gal}(M/K)$ we see that $A/(K^{\ast})^{5}$ and
$S_{K}/5S_{K}$ are dual groups in the bilinear pairing. Let $M_{1}$
be a field $K \subset M_{1} \subset M$ and $M_{1}/K$ is Galois. By
Kummer theory, there is a subgroup $B$ of $A$ such that
$$(K^{\ast})^{5} \subset B \subset A \subset K^{\ast}$$ and
$M_{1}=K(\sqrt[5]{B})$. Moreover, there is a group $T$, satisfying
$5S_{K} \subset T \subset S_{K}$, such that $S_{K}/T$ is dual of
$B/(K^{\ast})^{5}$ and $S_{K}/T \cong \mathrm{Gal}(M_{1}/K)$. One
can easily check that $[x^{\sigma}, \mu^{\sigma}]
=[x,\mu]^{\sigma}$, where $\sigma$ is the generator of
$\mathrm{Gal}(K/L)$, $x \in B/(K^{\ast})^{5}$ and $\mu \in S_{K}/T$,
$\mu^{\sigma} = z_{\sigma}^{-1} \mu z_{\sigma}$ where $z_{\sigma}
\in \mathrm{Gal}(M_{1}/L)$ is a element which maps to $\sigma$ under
the natural projection. \\
Writing $$(B/(K^{\ast})^{5})^{+} = \{ b \in B/(K^{\ast} | b^{\sigma}
= b \},$$ $$(B/(K^{\ast})^{5})^{-} = \{ b \in B/(K^{\ast} |
b^{\sigma} = b^{-1} \},$$ $$ (B/(K^{\ast})^{5})^{--} = \{ b \in
B/(K^{\ast} | b^{\sigma^{2}} = b^{-1} \},$$ we have the following
lemma:

\begin{lem}
Let
$$B/(K^{\ast})^{5} \times S_{K}/T \to \{ 5^{th} \text{ roots of unity in} K \}$$
$$(x,\mu) \mapsto [x,\mu]$$
be the bilinear pairing described above.  Then $(B/(K^{\ast})^{5})^{\dagger}$ is orthogonal to $(S_{K}/T)^{\dagger}$ under the pairing, where $\dagger \in \{+,-,--\}$. Moreover, $(B/(K^{\ast})^{5})^{\pm}$ is orthogonal to $(S_{K}/T)^{\mp}$ under the pairing.
\end{lem}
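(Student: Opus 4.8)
The plan is to derive all five orthogonality statements from the single equivariance identity $[x^{\sigma},\mu^{\sigma}]=[x,\mu]^{\sigma}$ already verified above, together with the bilinearity of the pairing and the explicit action of $\sigma$ on the value group. Since $\sigma$ generates $\mathrm{Gal}(K/L)$ and acts on the fifth roots of unity by $\zeta\mapsto\zeta^{3}$, every value $[x,\mu]\in K$ is a fifth root of unity satisfying $[x,\mu]^{\sigma}=[x,\mu]^{3}$; iterating gives $[x,\mu]^{\sigma^{2}}=[x,\mu]^{9}=[x,\mu]^{4}$. Feeding this into the equivariance identity (applied once and twice respectively), I would record the two master relations $[x^{\sigma},\mu^{\sigma}]=[x,\mu]^{3}$ and $[x^{\sigma^{2}},\mu^{\sigma^{2}}]=[x,\mu]^{4}$, valid for all $x\in B/(K^{\ast})^{5}$ and all $\mu\in S_{K}/T$.

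Next I would run through the five orthogonalities by substituting the defining eigenrelations into these master relations and simplifying the left-hand side by bilinearity. For the pair $(+,+)$, from $x^{\sigma}=x$ and $\mu^{\sigma}=\mu$ one gets $[x,\mu]=[x^{\sigma},\mu^{\sigma}]=[x,\mu]^{3}$, so $[x,\mu]^{2}=1$; as $[x,\mu]$ is a fifth root of unity and $\gcd(2,5)=1$, it equals $1$. The pair $(-,-)$, with $x^{\sigma}=x^{-1}$ and $\mu^{\sigma}=\mu^{-1}$, gives $[x^{-1},\mu^{-1}]=[x,\mu]$ against $[x,\mu]^{3}$, again forcing $[x,\mu]^{2}=1$. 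The off-diagonal pairs $(+,-)$ and $(-,+)$ both yield $[x,\mu]^{-1}=[x,\mu]^{3}$, i.e. $[x,\mu]^{4}=1$, hence $[x,\mu]=1$. Finally, for $(--,--)$ I would invoke the $\sigma^{2}$ master relation: from $x^{\sigma^{2}}=x^{-1}$ and $\mu^{\sigma^{2}}=\mu^{-1}$ the left side is $[x^{-1},\mu^{-1}]=[x,\mu]$ while the right side is $[x,\mu]^{4}$, whence $[x,\mu]^{3}=1$ and so $[x,\mu]=1$. In each case the resulting exponent ($2$, $3$, or $4$) is coprime to $5$, which is exactly what collapses a fifth root of unity to $1$.

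The computation is mechanical once the two master relations are established, so the only steps requiring genuine care are their inputs: correctly identifying that $\sigma$ cubes fifth roots of unity, so that the Galois twist on the right is a third power rather than the identity, and, for the $--$ component, iterating the equivariance identity twice to obtain $\sigma^{2}$-compatibility before the relation $\mu^{\sigma^{2}}=\mu^{-1}$ can be used. I expect the $(--,--)$ case to be the main, if modest, obstacle, since it is the only one in which the bilinear manipulation and the doubled Galois action must be combined; the other four are immediate. Conceptually these relations say that under the Kummer pairing the $\sigma$-eigencomponents can pair nontrivially only in the complementary pattern, with $(B/(K^{\ast})^{5})^{\pm}$ pairing into $(S_{K}/T)^{--}$ and $(B/(K^{\ast})^{5})^{--}$ pairing into $(S_{K}/T)^{+}\oplus(S_{K}/T)^{-}$, which is precisely what will later let us control $\mathrm{rank}\,(S_{K}/\lambda S_{K})^{+}$.
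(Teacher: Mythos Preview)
Your proposal is correct and follows essentially the same approach as the paper: both exploit the equivariance $[x^{\sigma},\mu^{\sigma}]=[x,\mu]^{\sigma}$ together with the action of $\sigma$ on fifth roots of unity to force $[x,\mu]$ to satisfy a relation $[x,\mu]^{k}=1$ with $\gcd(k,5)=1$. You are in fact more thorough than the paper, which only writes out the $(+,+)$ and $(+,-)$ cases and declares the rest ``similar,'' whereas you explicitly handle all five, including the $(--,--)$ case via the iterated relation for $\sigma^{2}$.
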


\begin{proof}
Let $x \in (B/(K^{\ast})^{5})^{+}$ and $\mu \in (S_{K}/T)^{+}$. Then $[x,\mu]=[x^{\sigma},\mu^{\sigma}] =
[x,\mu]^{\sigma}=[x,\mu]^{2} \text{ or } [x,\mu]^{3}$, since $\zeta^{\sigma}= \zeta^{2} \text{ or } \zeta^{3}$.
Thus $[x,\mu]=1$ or $[x,\mu]^{2}=1$. In either case, we see that $[x,\mu]=1$. Thus $(B/(K^{\ast})^{5})^{+}$ and
$(S_{K}/T)^{+}$ are orthogonal in this pairing. the other cases are similar.\\
For the second part, let $x \in (B/(K^{\ast})^{5})^{+}$ and $\mu \in (S_{K}/T)^{-}$.
Then $[x,\mu]=[x^{\sigma},(\mu^{-1})^{\sigma}] = [x,\mu^{-1}]^{\sigma}=([x,\mu]^{-1})^{\sigma}=[x,\mu]^{3} \text{ or }
[x,\mu]^{2}$, since $\zeta^{\sigma}= \zeta^{2} \text{ or } \zeta^{3}$. Thus $[x,\mu]^{2}=1$ or $[x,\mu]=1$.
In either case, we see that $[x,\mu]=1$. Thus $(B/(K^{\ast})^{5})^{+}$ and $(S_{K}/T)^{-}$ are orthogonal in this pairing.
The other case is similar.
\end{proof}

\noindent Since $(B/(K^{\ast})^{5})^{-}$ and $(S_{K}/T)^{+}$ are
orthogonal and $(B/(K^{\ast})^{5})^{+}$ and $(S_{K}/T)^{-}$ are
orthogonal, the dual group of $(S_{K}/T)^{+} \oplus (S_{K}/T)^{-}$
is contained in $(B/(K^{\ast})^{5})^{--}$. On the other hand,
$(B/(K^{\ast})^{5})^{--}$ and $(S_{K}/T)^{--}$ are orthogonal, and
hence the dual group of $(B/(K^{\ast})^{5})^{--}$ is contained in
$(S_{K}/T)^{+} \oplus (S_{K}/T)^{-}$. We see therefore that,
$(S_{K}/T)^{+} \oplus (S_{K}/T)^{-}$ is dual to
$(B/(K^{\ast})^{5})^{--}$ under this pairing. Let
$M_{1}=K(x_{1}^{\frac{1}{5}},\cdots,x_{t}^{\frac{1}{5}})$ be the
genus field of $K/F$, that is $S_{K}/\lambda S_{K} \cong
\mathrm{Gal}(M_{1}/K)$. Suppose $x_{1},\cdots,x_{w}$ are the
rational numbers among the $x_i$'s.  Then, $\mathrm{rank}
(B/(K^{\ast})^{5})^{+} =w$.  Suppose $x_{w+1},\cdots,x_{r}$ are the
$x_i$'s whose factors only contain rational numbers and primes of
the form $(a \zeta^{2}+a \zeta^{3}+b)$. Noticing that for an element
$\pi = (a \zeta^{2}+a \zeta^{3}+b)$, we get $\pi^{\sigma^{2}} = \pi
\neq \pi^{\sigma}$, we have $\mathrm{rank} (B/(K^{\ast})^{5})^{-}
=r-w$ and $\mathrm{rank} (B/(K^{\ast})^{5})^{--} = t-r$. Hence
$$\mathrm{rank} (S_{K}/\lambda S_{K})^{+} \leq t-r.$$
In particular, we obtain the theorem:

\begin{thm}\label{r=0}
Let $L=\Q(n^{\frac{1}{5}})$, where $n=p_{1}^{a_{1}} \cdots p_{m}^{a_{m}} q_{1}^{b_{1}} \cdots q_{u}^{b_{u}}$ where $p_{i} \equiv \pm 2 \pmod{5}$, $q_{j} \equiv -1 \pmod{5}$ and $1 \leq a_{i},b_{j} \leq 4$ for $i \in \{ 1,\cdots,m\}$ and for $j \in \{ 1,\cdots,u\}$. Let $F=\Q(\zeta_{5})$ and $K=F(n^{\frac{1}{5}})=L(\zeta_{5})$. Then $\mathrm{rank }(S_{K}/S_{K}^{\Delta})^{+}=0$ and
$$t-s_{1}-s_{2} = \lambda^3-\mathrm{rank}~\mathrm{of} S_K  \leq \mathrm{rank } S_{L} \leq \lambda^2-\mathrm{rank}~\mathrm{of}~~ S_K = t-s_{1}.$$
\end{thm}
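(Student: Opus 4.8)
The plan is to reduce the vanishing $\mathrm{rank}(S_{K}/\lambda S_{K})^{+}=0$ to a statement about the action of $\sigma^{2}$ on the prime divisors of $n$, and then to feed this vanishing into the eigenspace computation leading to Theorem \ref{ranksl} to extract the two bounds. First I would pin down the factorization in $F=\Q(\zeta_{5})$ of the primes dividing $n$. Since the residue degree of an unramified rational prime $p\neq 5$ equals the order of $p$ in $(\Z/5)^{\ast}$, a prime $p_{i}\equiv \pm 2\pmod 5$ (an element of order $4$) is inert, so $p_{i}$ is itself a prime of $\Z[\zeta_{5}]$ fixed by all of $\mathrm{Gal}(F/\Q)$; and a prime $q_{j}\equiv -1\pmod 5$ (order $2$) splits as $\pi_{1}\pi_{2}$ with each factor of residue degree $2$, so its decomposition group is $\langle \sigma^{2}\rangle$ and each $\pi_{k}$ is fixed by $\sigma^{2}$ (in the explicit shape $a\zeta^{3}+a\zeta^{2}+b$ of Theorem \ref{-1mod5}). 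In every case the prime divisors of $n$ in $\Z[\zeta_{5}]$ are fixed by $\sigma^{2}$.

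Next I would invoke the Kummer pairing. Writing $M_{1}=K(\sqrt[5]{B})$ with $(K^{\ast})^{5}\subseteq B\subseteq K^{\ast}$ for the genus field, the group $B/(K^{\ast})^{5}$ is generated by the $x_{i}$, each of which, by Proposition \ref{generators}, is (up to units and fifth powers) a product of the prime divisors of $n$ together with rationals. By the previous paragraph every such $x_{i}$ is fixed by $\sigma^{2}$, so $\sigma^{2}$ acts trivially on all of $B/(K^{\ast})^{5}$; hence $(B/(K^{\ast})^{5})^{--}=\{b: b^{\sigma^{2}}=b^{-1}\}=0$, this being a $5$-group. The orthogonality lemma preceding Theorem \ref{r=0} identifies $(S_{K}/\lambda S_{K})^{+}\oplus (S_{K}/\lambda S_{K})^{-}$ as the dual of $(B/(K^{\ast})^{5})^{--}$, so both $(S_{K}/\lambda S_{K})^{+}$ and $(S_{K}/\lambda S_{K})^{-}$ vanish; in particular $\mathrm{rank}(S_{K}/\lambda S_{K})^{+}=0$, and moreover $S_{K}/\lambda S_{K}=(S_{K}/\lambda S_{K})^{--}$.

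For the bounds I would substitute the two vanishings into the eigenspace bookkeeping of Section 6.1. Because $(S_{K}/\lambda S_{K})^{+}=(S_{K}/\lambda S_{K})^{-}=0$, surjectivity of $\theta_{1}$ forces $(\lambda S_{K}/\lambda^{2}S_{K})^{--}=0$; then surjectivity of $\alpha_{3}$ forces $(\lambda^{2}S_{K}/\lambda^{3}S_{K})^{--}=0$; and then surjectivity of $\beta_{2}$, whose source is now $0$, forces $(\lambda^{3}S_{K}/\lambda^{4}S_{K})^{+}=(\lambda^{3}S_{K}/\lambda^{4}S_{K})^{-}=0$. The expression $\mathrm{rank}\,S_{L}=\sum_{i=0}^{3}\mathrm{rank}(\lambda^{i}S_{K}/\lambda^{i+1}S_{K})^{+}$ therefore collapses to $\mathrm{rank}(\lambda S_{K}/\lambda^{2}S_{K})^{+}+\mathrm{rank}(\lambda^{2}S_{K}/\lambda^{3}S_{K})^{+}$. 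Using the surjection $\alpha_{1}$ and the fact that $(\lambda S_{K}/\lambda^{2}S_{K})^{--}=0$ makes the $+$ and $-$ summands of $\lambda S_{K}/\lambda^{2}S_{K}$ account for the entire $\lambda^{2}$-rank $t-s_{1}$, this equals $(t-s_{1})-\mathrm{rank}\,\mathrm{Ker}\,\alpha_{1}$. Since $\mathrm{Ker}\,\alpha_{3}=0$, the identity $\mathrm{rank}\,\mathrm{Ker}\,\alpha_{1}+\mathrm{rank}\,\mathrm{Ker}\,\alpha_{2}+\mathrm{rank}\,\mathrm{Ker}\,\alpha_{3}=s_{2}$ gives $0\leq \mathrm{rank}\,\mathrm{Ker}\,\alpha_{1}\leq s_{2}$, whence $t-s_{1}-s_{2}\leq \mathrm{rank}\,S_{L}\leq t-s_{1}$, which are exactly the $\lambda^{3}$- and $\lambda^{2}$-ranks of $S_{K}$.

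I expect the main obstacle to be the second paragraph: making precise that the genus-field generators genuinely involve only $\sigma^{2}$-fixed primes. Proposition \ref{generators} produces the $x_{i}$ as products such as $\pi_{f+1}\pi_{i}^{h_{i}}$ chosen to satisfy congruences modulo $\lambda^{5}$, and one must check that forming these combinations, deleting generators according to the value of $q^{\ast}$, and passing to $B/(K^{\ast})^{5}$ does not introduce a factor on which $\sigma^{2}$ acts nontrivially. As $\sigma^{2}$-invariance is preserved under products and under reduction modulo units and fifth powers, this should go through, but it is the step that requires care; the eigenspace collapse in the third paragraph is then purely formal.
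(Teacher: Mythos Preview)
Your proposal is correct and follows essentially the same route as the paper: you show that all genus-field generators are products of primes fixed by $\sigma^{2}$ (inert rational primes $p_{i}$ and the factors $a\zeta^{3}+a\zeta^{2}+b$ of the $q_{j}$), deduce $(B/(K^{\ast})^{5})^{--}=0$ and hence $(S_{K}/\lambda S_{K})^{+}=(S_{K}/\lambda S_{K})^{-}=0$ by duality, and then propagate these vanishings through the maps $\theta_{i},\alpha_{i},\beta_{i}$ to obtain $\mathrm{rank}\,S_{L}=(t-s_{1})-\mathrm{rank}\,\mathrm{Ker}\,\alpha_{1}$ with $0\leq \mathrm{rank}\,\mathrm{Ker}\,\alpha_{1}\leq s_{2}$. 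The paper does exactly this, citing Proposition~\ref{generators} and Theorem~\ref{-1mod5} for the shape of the generators and plugging into the displayed identity for $\mathrm{rank}\,S_{L}$ established just before Theorem~\ref{ranksl}; your caution in the last paragraph about verifying $\sigma^{2}$-invariance of the $x_{i}$ is precisely the content of that citation.
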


\begin{proof}
As proved in proposition \ref{generators} and theorem \ref{-1mod5},
all the generators of the genus field $M_{1}$ are either rational
integers with prime factors $p_{i}$ or contains factors of $q_{j}$
in $F$. But, as we proved in theorem \ref{-1mod5}, factors of
$q_{j}$'s are of the form $(a \zeta^{2}+a \zeta^{3}+b)$. Hence, with
$r$ as defined before the theorem, we have $r=t$. Then, the upper
bound for rank of $S_L$ follows from theorem \ref{ranksl}. For the
lower bound, notice that, $(S_{K}/\lambda S_{K})^{+} \oplus
(S_{K}/\lambda S_{K})^{-}$ is dual to $(B/(K^{\ast})^{5})^{--}$
which is $0$. We have,
$$(\lambda S_{K}/\lambda^{2} S_{K})^{--}=(\lambda^{2}
S_{K}/\lambda^{3} S_{K})^{--} =(\lambda^{3} S_{K}/\lambda^{4}
S_{K})^{+}=(\lambda^{3} S_{K}/\lambda^{4} S_{K})^{-}=0$$ and
consequently, the maps $\alpha_{3}, \beta_{2}$ defined in section
5.1 are both $0$. Thus
$${\rm rank } S_{L}= {\rm rank }(S_{K}/\lambda S_{K})^{+} + t -s_{1} - {\rm rank} \text{ }{\rm Ker} \alpha_{1}
= (t-s_{1}-s_{2})+ {\rm rank} \text{ } {\rm Ker} \alpha_{2} \geq t-s_{1}-s_{2}.$$
\end{proof}

\noindent {\bf Remarks:} \\
(i) We would have better results if we can get more information
about rank $(S_K/\lambda S_K)^+$.\\
(ii) Under the assumption that all ambiguous ideal classes are
strongly ambiguous, we computed the $\lambda^2$-rank of $S_{K}$ in
theorems \ref{-7mod25},\ref{not-7mod25},\ref{is-7mod25},\ref{-1mod5}
and \ref{2primes}. If there are ambiguous ideal classes which are
not strongly ambiguous, the maximum value of $s_{1}$ is given.

\begin{cor}
Let $L=\Q(N^{\frac{1}{5}})$. In the following cases $S_{L}$ is
trivial or cyclic.
\begin{itemize}
\item Let $N=p^{a}$, where $p \equiv \pm2 \pmod{5}$ is a prime, $1 \leq a \leq 4$.
\item Let $N=q_{1}^{a_{1}}q_{2}^{a_{2}}$ where  $q_{i} \equiv \pm 2 \pmod{5}$ but $q_{i} \not\equiv \pm 7 \pmod{25}$, $1 \leq a_{i} \leq 4$ for $i=1,2$ such that $N \equiv \pm 1, \pm 7 \pmod{25}$.
\item Let $N=p^{a}$, where $p \equiv -1 \pmod{5}$ is a prime, $1 \leq a \leq 4$.
\item Let $N=p_{1}^{a_{1}}p_{2}^{a_{2}}$ where $p_{i} \equiv \pm 7 \pmod{25}$, $1 \leq a_{i} \leq 4$ for $i=1,2$ such that $N \equiv \pm 1, \pm 7 \pmod{25}$.
\item $N=p^{a}q^{b}$ where $p \equiv \pm 7 \pmod{25}$ , $q \equiv \pm 2 \pmod{5}$ but $q \not\equiv \pm 7 \pmod{25}$ and $1 \leq a,b \leq 4$such that $N \not\equiv \pm 1, \pm 7 \pmod{25}$.
\item $N=q_{1}^{a_{1}}q_{2}^{a_{2}}$ where  $q_{i} \equiv \pm 2 \pmod{5}$ but $q_{i} \not\equiv \pm 7 \pmod{25}$, $1 \leq a_{i} \leq 4$ for $i=1,2$ such that $N \not\equiv \pm 1, \pm 7 \pmod{25}$.
\item  $N=p_{1}^{a_{1}}p_{2}^{a_{2}}q^{b}$ where $p_{i} \equiv \pm 7 \pmod{25}$, $q \equiv \pm 2 \pmod{5}$ but $q \not\equiv \pm 7 \pmod{25}$  $1 \leq a_{i},b \leq 4$ for $i=1,2$ such that $N \equiv \pm 1,\pm 7 \pmod{25}$.
\item $N=p^{a}q_{1}^{a_{1}}q_{2}^{a_{2}}$ where $p \equiv \pm 7 \pmod{25}$, $q_{i} \equiv \pm 2 \pmod{5}$ but $q_{i} \not\equiv \pm 7 \pmod{25}$, $1 \leq a,a_{i} \leq 4$ for $i=1,2$ such that $N \equiv \pm 1,\pm 7 \pmod{25}$.
\item  $N=q_{1}^{a_{1}}q_{2}^{a_{2}}q_{3}^{a_{3}}$ where  $q_{i} \equiv \pm 2 \pmod{5}$ but $q_{i} \not\equiv \pm 7 \pmod{25}$, $1 \leq a_{i} \leq 4$ for $i=1,2,3$, such that $N \equiv \pm 1, \pm 7 \pmod{25}$.
\item Let $N=p^{a}q^{b}$, where $p \equiv -1 \pmod{5}$ and $q \equiv \pm 7 \pmod{25}$ are primes, $1 \leq a,b \leq 4$.
\end{itemize}
\end{cor}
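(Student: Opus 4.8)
The plan is to bound $\mathrm{rank}\, S_L$ above by $1$ in each of the listed cases, using the three ingredients already established: the inequality $\mathrm{rank}\, S_L \leq \mathrm{rank}(S_K/\lambda S_K)^{+} + (t-s_1)$ of Theorem \ref{ranksl}, the vanishing $\mathrm{rank}(S_K/\lambda S_K)^{+}=0$ of Theorem \ref{r=0}, and the values of the $\lambda^2$-rank $t-s_1$ computed in Theorems \ref{-7mod25}, \ref{not-7mod25}, \ref{is-7mod25}, \ref{-1mod5} and \ref{2primes}. Writing $K=\Q(\zeta_5,N^{1/5})$, I would first observe that in every item each rational prime dividing $N$ is congruent to $\pm 2 \pmod 5$ (this covers both the primes $\equiv \pm 7 \pmod{25}$ and the primes $\equiv \pm 2$ but $\not\equiv \pm 7 \pmod{25}$) or to $-1 \pmod 5$, and never to $1 \pmod 5$. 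After relabelling the former as the ``$p_i$'' and the latter as the ``$q_j$'' of Theorem \ref{r=0}, that theorem applies and gives $\mathrm{rank}(S_K/\lambda S_K)^{+}=0$; Theorem \ref{ranksl} then reduces the claim to showing that the $\lambda^2$-rank $t-s_1$ of $S_K$ is at most $1$.

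Next I would match each item to the theorem that computes its $\lambda^2$-rank. Items involving only primes $\equiv \pm 2 \pmod 5$ with $N \equiv \pm 1,\pm 7 \pmod{25}$ fall under Theorem \ref{is-7mod25} with the appropriate $(r,s)$, those with $N \not\equiv \pm 1,\pm 7 \pmod{25}$ under Theorem \ref{not-7mod25}, the two-prime item with both factors $\equiv \pm 7 \pmod{25}$ under Theorem \ref{-7mod25} with $r=2$, the single prime $\equiv -1 \pmod 5$ under Theorem \ref{-1mod5}, and the item $N=p^a q^b$ with $p\equiv -1 \pmod 5$, $q\equiv \pm 7 \pmod{25}$ under Theorem \ref{2primes}. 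In each of these, the ``all strongly ambiguous'' clause gives $s_1=0$ and a value of $t=t-s_1$ equal to $0$ or $1$; in particular $t\le 1$ in every item except the last, where $t=2$.

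For all but the last item I can then argue uniformly: since $t$ is fixed by Hasse's formula (it depends only on $q^{*}$, not on whether ambiguous classes are strongly ambiguous) and since $s_1\ge 0$, we have $t-s_1\le t\le 1$ regardless of the strong-ambiguity hypothesis. The single delicate case is $N=p^a q^{b}$ with $p\equiv -1 \pmod 5$ and $q\equiv \pm 7 \pmod{25}$, where $t=2$; here I must establish $s_1\ge 1$ even when some ambiguous ideal class fails to be strongly ambiguous. The key observation is that the nonzero entry of the matrix $C_1$ exhibited in the proof of Theorem \ref{2primes}, namely the local Hilbert symbol $\Big(\frac{x_2,\,pq}{p}\Big)\neq 1$, is a purely local quantity unaffected by the strong-ambiguity hypothesis, while dropping that hypothesis only appends the extra columns coming from $\mathfrak{B}$ (and possibly $\mathfrak{B}'$) to $C_1$, which can only raise its rank. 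Thus $s_1=\mathrm{rank}\, C_1\ge 1$ in all cases, so again $t-s_1\le 1$.

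Putting these together gives $\mathrm{rank}\, S_L \le \mathrm{rank}(S_K/\lambda S_K)^{+}+(t-s_1)\le 0+1=1$ in every item, so $S_L$ is trivial or cyclic. The main obstacle is precisely the bookkeeping around non-strongly-ambiguous classes in the one case with $t=2$; everywhere else the bound $t\le 1$ makes the conclusion immediate.
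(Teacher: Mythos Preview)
Your approach is correct and is essentially the same as the paper's: apply Theorem \ref{r=0} to get $\mathrm{rank}(S_K/\lambda S_K)^{+}=0$, then use Theorems \ref{-7mod25}--\ref{2primes} to obtain $t-s_1\le 1$ in each case, with the last item being the only one requiring $s_1\ge 1$ rather than just $t\le 1$. Your treatment of that last item is in fact more explicit than the paper's bare assertion ``$t=2$ and $s_1\ge 1$'': you correctly note that the nonzero Hilbert-symbol entry of $C_1$ from Theorem \ref{2primes} is unaffected by the strong-ambiguity hypothesis and that appending extra columns can only increase the rank.
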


\begin{proof}
In all these situations, $\mathrm{rank }(S_{K}/\lambda S_{K})^{+}=0$. From Theorem \ref{-7mod25},\ref{not-7mod25},\ref{is-7mod25},\ref{-1mod5} follows that for each of these cases except the last case $t=0 \text{ or }1$. In the last case, we see from Theorem \ref{2primes} that $t=2$ and $s_{1} \geq 1$. Thus $\mathrm{rank} S_{L} \leq 1$. The result follows. Note that, in first two cases the class groups are trivial.
\end{proof}

\noindent {\bf Remark:} Let $f$ be a normalized cuspidal Hecke
eigenform of weight $k$ and level $N$. Let $K_{f}$ denote the
extension of $\Q_{5}$ generated by the $q$-expansion coefficients
$a_{n}(f)$ of $f$. It is known that $K_{f}$ is a finite extension of
$\Q_{5}$. In the case $N$ is prime and $5 || N-1$, it is known
\cite{Mazur} that there exists unique (up to conjugation) weight $2$
normalized cuspidal Hecke eigenform defined over $\QQ_{5}$,
satisfying the congruence
$$a_{l}(f) \equiv 1+l \pmod{\mathfrak{p}}$$
where $\mathfrak{p}$ is the maximal ideal of the ring of integer of
$K_{f}$, and $l \neq N$ are primes. In this situation it is also
known that $K_{f}$ is a totally ramified extension of $\Q_{5}$ and
let $[K_{f}:\Q_{5}] = e_{5}$. Calegari and Emerton \cite{Calemer}
showed that $e_{5}=1$ if the class group of $\Q(N^{\frac{1}{5}})$ is
cyclic. They also showed that if $N \equiv 1 \pmod{5}$, and the
$5$-class group of $\Q(N^{\frac{1}{5}})$ is cyclic, then
$\prod_{l=1}^{(N-1)/2} l^{l}$ is not a $5^{th}$ power modulo $N$.
This corollary gives us information when $5$-class group of
$\Q(N^{\frac{1}{5}})$ is cyclic for various $N$.

\newpage

\begin{table}[ht!]
\caption{\bf{Structure of $5$ Class Group of $L$}}
\centering
\begin{tabular}{ c  c }
\hline 
\toprule
$n$ & $S_{L}$ \\ \midrule
$2 \times 7$ & $1$\\
$3 \times 7$ & $1$\\
$7 \times 43$ & $1$\\
$2 \times 3 \times 7$ & $C_{5}$\\
$2 \times 13 \times 7$ & $C_{5}$\\
$7 \times 107$ & $C_{5}$\\
$3 \times 13 \times 7$ & $C_{5} \times C_{5}$\\ \midrule

$19,29,59,79,89,109,139,149,179,199,229,239,269,349$ & $C_{5}$ \\ \midrule

$7 \times 19, 7 \times 29, 7 \times 59,7 \times 79,7 \times 89, 7 \times 149, 7 \times 199$ & $C_{5}$\\
 $43 \times 19, 43 \times 29, 43 \times 59,43 \times 79,43 \times 89, 43 \times 149, 43 \times 199$ & $C_{5}$\\ \midrule

$11,41,61,71,101,151,191,241,251,271$ & $C_{5}$ \\
$31,131,181$ & $C_{5} \times C_{5}$\\
$211,281$ & $C_{5} \times C_{5} \times C_{5}$ \\
\bottomrule
\end{tabular}
\label{Tab4}
\end{table}

\noindent
{\bf Acknowledgments.}\\

\noindent It is a pleasure to thank Dipendra Prasad for
clarifications regarding genus theory. We are also indebted to
Suprio Bhar for help with some computer codes in SAGE.  \vskip 5mm

\vskip 10mm

\noindent {\bf Addresses of authors:} \vskip 5mm

\noindent {\it Manisha Kulkarni}, Department of mathematics,
International Institute of Information Technology, 26/C, Electronics
City, Hosur
Road, Bangalore 560100, India.\\
{\em manisha.shreesh@gmail.com}\\

\noindent {\it Dipramit Majumdar}, Indian Institute of Science Education and Research, Dr. Homi Bhaba Road, Pashan, Pune 411008, India.\\
{\em dipramit@gmail.com}\\

\noindent {\it Balasuramanian Sury}, Stat-Math Unit, Indian
Statistical Institute, 8th Mile
Mysore Road, Bangalore 560059, India.\\
{\em surybang@gmail.com}

\end{document}